\theoremstyle{plain}
\newtheorem{theorem}{Theorem}[section]
\newtheorem{lemma}[theorem]{Lemma}
\newtheorem{definition-theorem}[theorem]{Definition-Theorem}
\newtheorem{proposition}[theorem]{Proposition}
\newtheorem{corollary}[theorem]{Corollary}
\newtheorem{definition}[theorem]{Definition}
\newtheorem{example}[theorem]{Example}
\newtheorem{remark}[theorem]{Remark}
\newtheorem{conjecture}[theorem]{Conjecture}
\newtheorem{notation}[theorem]{Notation}
\newtheorem*{maintheorem*}{Main Theorem}
\newcommand \bth[1] { \begin{theorem}\label{t#1} }
\newcommand \ble[1] { \begin{lemma}\label{l#1} }
\newcommand \bpr[1] { \begin{proposition}\label{p#1} }
\newcommand \bco[1] { \begin{corollary}\label{c#1} }
\newcommand \bde[1] { \begin{definition}\label{d#1}\rm }
\newcommand \bex[1] { \begin{example}\label{e#1}\rm }
\newcommand \bre[1] { \begin{remark}\label{r#1}\rm }
\newcommand \bcj[1] { \begin{conjecture}\label{j#1}\rm }
\newcommand \bnota[1] { \begin{notation}\label{n#1}\rm }
\renewcommand {\eth} { \end{theorem} }
\newcommand {\ele} { \end{lemma} }
\newcommand {\epr} { \end{proposition} }
\newcommand {\eco} { \end{corollary} }
\newcommand {\ede} { \end{definition} }
\newcommand {\eex} { \end{example} }
\newcommand {\ere} { \end{remark} }
\newcommand {\ecj} { \end{conjecture} }
\newcommand {\enota} { \end{notation} }
\newcommand \leref[1]{Lemma \ref{l#1}}
\newcommand \prref[1]{Proposition \ref{p#1}}
\newcommand \coref[1]{Corollary \ref{c#1}}
\newcommand \deref[1]{Definition \ref{d#1}}
\newcommand \exref[1]{Example \ref{e#1}}
\newcommand \notaref[1]{Notation \ref{n#1}}
\newsavebox{\@brx}
\newcommand{\llangle}[1][]{\savebox{\@brx}{\(\m@th{#1\langle}\)}%
  \mathopen{\copy\@brx\kern-0.5\wd\@brx\usebox{\@brx}}}
\newcommand{\rrangle}[1][]{\savebox{\@brx}{\(\m@th{#1\rangle}\)}%
  \mathclose{\copy\@brx\kern-0.5\wd\@brx\usebox{\@brx}}}
\DeclareMathOperator{\Ext}{Ext}
\DeclareMathOperator{\Spec}{Spec}
 \DeclareMathOperator{\Proj}{Proj}
 \DeclareMathOperator{\Hom}{Hom}
\DeclareMathOperator{\Mod}{{\sf Mod}}
\DeclareMathOperator{\der}{{\sf D}}
\DeclareMathOperator{\dperf}{{\sf D^{perf}}}
\DeclareMathOperator{\supp}{supp}
\DeclareMathOperator{\Spc}{Spc}
\DeclareMathOperator{\Ideals}{Ideals}
\DeclareMathOperator{\cone}{cone}
\DeclareMathOperator{\Ind}{Ind}
\DeclareMathOperator{\PHom}{PHom}
\DeclareMathOperator{\Supp}{Supp}
\DeclareMathOperator{\Thom}{Thom}
\newcommand{\mf}{\mathfrak}
\newcommand{\mc}{\mathcal}
\newcommand{\id}{\operatorname{id}}
\newcommand{\kk}{\Bbbk}
\newcommand{\bR}{\mathbf R}
\newcommand{\bS}{\mathbf S}
\newcommand{\bC}{\mathbf C}
\newcommand{\bK}{\mathbf K}
\newcommand{\bL}{\mathbf L}
\newcommand{\bM}{\mathbf M}
\newcommand{\bP}{\mathbf P}
\newcommand{\bQ}{\mathbf Q}
\newcommand{\bI}{\mathbf I}
\newcommand{\bJ}{\mathbf J}
\newcommand{\Loc}{\operatorname{Loc}}
\newcommand{\unit}{\ensuremath{\mathbf 1}}
\newcommand{\ul}{\underline}
\newcommand{\suppB}{\supp^{\rm B}}
\newcommand{\suppC}{\supp^{\rm C}}
\newcommand{\SuppB}{\Supp^{\rm B}}
\newcommand{\SuppC}{\Supp^{\rm C}}
\newcommand{\itGamm}{{\it{\Gamma}}}
\newcommand{\cohom}{\operatorname{H}^\bullet}
\newcounter{listequation}
\numberwithin{equation}{section}
\title[Formal extension of noncommutative tensor-triangular support varieties]{Formal extension of noncommutative tensor-triangular support varieties}
\begin{document}

\author[Cai]{Merrick Cai}
\address{(Cai) Department of Mathematics, Harvard University, Cambridge, MA 02139, U.S.A.}
\email{merrickcai@math.harvard.edu}

\author[Vashaw]{Kent B. Vashaw}
\address{(Vashaw) Department of Mathematics,
University of California Los Angeles,
Los Angeles, CA 90095, U.S.A.}
\email{kentvashaw@math.ucla.edu}

\keywords{noncommutative Balmer spectrum, thick ideal classification, support variety}

\subjclass{
18G65,  
18G80, 
18M05, 
}

\begin{abstract}
Given a support variety theory defined on the compact part of a monoidal triangulated category, we define an extension to the non-compact part following the blueprint of Benson--Carlson--Rickard, Benson--Iyengar--Krause, Balmer--Favi, and Stevenson. We generalize important aspects of the theory of extended support varieties to the noncommutative case, and give characterizations of when an extended support theory detects the zero object, under certain assumptions. In particular, we show that when the original support variety theory is based on a Noetherian topological space, detects the zero object,  satisfies a generalized tensor product property, and comes equipped with a comparison map, then the extended support variety also detects the zero object. In the case of stable categories of finite tensor categories, this gives conditions under which the central cohomological support admits an extension that detects the zero object, confirming part of a recent conjecture made by the second author together with Nakano and Yakimov.
\end{abstract}

\maketitle



\section{Introduction}
\label{sect-intro}

Since the early 80s, support varieties have been an important tool in representation theory, when cohomological support and rank support were introduced to provide geometric techniques for studying categories of algebraic origin \cite{AS1982,Carlson1983,Alperin1986}; analogous versions of support in commutative algebra and algebraic geometry were being developed around the same period \cite{Hopkins1987,Avramov1989,NeemanChrom1992}. A {\it{support theory}} here and throughout means a map which assigns to each object of some category a closed subset of some topological space, in a way which is compatible with the structure of the category. By the early 2000s, rank varieties were generalized to finite group schemes via the $\pi$-points of Friedlander--Pevtsova \cite{FP2007}. With the advent of tensor-triangular geometry, a new support theory (the Balmer support, universal support, or tensor-triangular support) based on the Balmer spectrum entered the literature \cite{Balmer2005}. It has found widespread applications in algebraic topology, commutative algebra, algebraic geometry, modular representation theory, equivariant stable homotopy theory, and motivic homotopy theory (see \cite{Balmer2020} and the references therein). Noncommutative Balmer spectra were developed in \cite{NVY2022} to study monoidal triangulated categories without braidings, which arise naturally from various  settings such as stable categories of finite tensor categories (e.g.~ the representation theory of finite-dimensional Hopf algebras), group actions on tensor-triangulated categories via the crossed product (see e.g. \cite{HuangVashaw2025}), and derived categories of bimodules over a noncommutative ring (see \cite[Section 1]{BKSS2020}). 

To greater and lesser extents, these theories of support varieties were developed to understand ``sufficiently small" tensor-triangulated categories, that is, categories of compact objects. From the beginning (e.g.~ when cohomological and rank support were used by Benson--Carlson--Rickard in the 90s to classify the thick ideals of the stable category of finite-dimensional modules for a finite group in characteristic $p$ \cite{BCR1997}), it has been clear that even to understand these ``small" categories, it is sometimes necessary to extend a support theory to arbitrary ``big" objects. For example, in the finite group case, this means extending to infinite-dimensional representations. Fundamentally, this is because Brown representability guarantees that functors preserving set-indexed coproducts have right adjoints \cite[Corollary 2.13]{BIK2012}; but this construction only works when the category is compactly-generated, and in particular requires the existence of arbitrary set-indexed coproducts.

The first extended support varieties, designed to study infinitely-generated representations of a finite group over a field of positive characteristic, were introduced by Benson--Carlson--Rickard in \cite{BCR1995,BCR1996}, which used the Rickard idempotent functors developed in \cite{Rickard1997}. This extended support variety theory was an extension of the rank / cohomological support. This approach was generalized by Benson--Iyengar--Krause to any triangulated category with an action by a graded-commutative Noetherian ring, which plays the role of the cohomology ring \cite{BIK2008, BIK2012}. Shortly after the introduction of the Balmer support for symmetric tensor-triangulated categories, an analogous extension, also using versions of Rickard idempotents, was produced by Balmer--Favi \cite{BF2011}. This version of support was further developed and extended to tensor-triangulated module categories by Stevenson \cite{Stevenson2013, Stevenson2018}. Beyond its use in classifying thick ideals, extended supports have also been used in the wide-ranging program of stratification, i.e.~ in classifying localizing subcategories \cite{BIK2011a, BIK2011b,  Stevenson2013,Stevenson2014b, Stevenson2017local,BIKP2017,BIKP2018,BHS2023a,BHS2023b}.

We are interested in developing the analogous theory in the context of stable categories of finite tensor categories. Finite tensor categories are abelian monoidal categories satisfying finiteness properties; they have played key roles in representation theory, mathematical physics, and topology (see \cite{EGNO2015}). Their stable categories are (possibly noncommutative) monoidal triangulated categories, and there is significant attention currently being given to support varieties, and in particular in producing a concrete support datum that realizes their Balmer support, see \cite{BPW2021, NVY2022,NVY2,NP2023a,NP2023b, NVY3}. While a version of extended support for the noncommutative Balmer support was given in \cite[Appendix B]{NVY3}, there has not yet been an attempt to systematically develop extended support based on Rickard idempotents for supports other than the Balmer one. Developing such a theory would be advantageous, by a theorem of the second author with Nakano and Yakimov, which states that if $\sigma$ is a support map on a monoidal triangulated category $\bK^c$ with value in a Zariski space $X$ which satisfies faithfulness, realization, and a tensor product property, and admits an extension which also satisfies faithfulness and a tensor product property, then $\Spc \bK^c \cong X$ \cite[Theorem 1.4.1]{NVY2022}. We give a precise formulation of this theorem in \Cref{nvy-restated}, after having recalled the necessary technical background in Section \ref{sect-background}.

There is a candidate for a support which realizes the Balmer support for the stable category of a finite tensor category, namely the central cohomological support $\suppC$ introduced by the second author with Nakano and Yakimov \cite[Section 1.4]{NVY3}. This support datum is defined similarly to classical cohomological support, but is based on a particular subalgebra of the cohomology ring, rather than the whole cohomology ring. It is conjectured in \cite[Conjecture E]{NVY3} that the central cohomological support satisfies the conditions of \Cref{nvy-restated}, which would imply that $\Spc \ul{\bC}$ is homeomorphic to the projective spectrum of the ring $C^\bullet$. 

In particular, \cite[Conjecture E]{NVY3} conjectures that the central cohomological support admits a faithful extension which satisfies the generalized tensor product property. The goal of this paper is to develop a general extension for an arbitrary support datum in the noncommutative situation based on Rickard idempotent functors, and determine the precise extent to which the existence of a faithful extension satisfying the tensor product property in the situation of \Cref{nvy-restated} follows from the other assumptions of that theorem. We believe that this represents a foundational question and is a necessary step in developing the theory of  noncommutative support varieties. We first find the following:

\begin{theorem}[\prref{ext-supp-is-ext-supp}, Theorem \ref{comp-faith}, Theorem \ref{tensor-faith-iff}, \Cref{surj-map-faith}]
\label{maintheorem-general}
Let $\bK$ be a rigidly-compactly-generated monoidal triangulated category with compact part $\bK^c$ which is either $\Spc$-Noetherian or has a thick generator. Let $(X,\sigma)$ be a support datum (see Section \ref{sect-background}) defined on $\bK^c$, and let $\sigma$ be faithful (that is, $\sigma(A)=\varnothing$ if and only if $A \cong 0$).
\begin{enumerate}
    \item If $\sigma$ is tensorial ($\bigcup_{B \in \bK^c} \sigma(A \otimes B \otimes C) =\sigma(A) \cap \sigma(B)$ for all $A$ and $B \in \bK^c$) and $X$ is a Zariski space (Noetherian and sober), then $\sigma$ admits an extension $\widetilde{\sigma}$ to $\bK$ via Rickard idempotent functors. Suppose $\sigma$ is comparative, that is,  the universal map $\eta: X \to \Spc \bK^c$ admits a left inverse $\rho$ such that $\eta \rho(\bP) \subseteq \bP$ for all $\bP \in \Spc \bK^c$. Then the extension $\widetilde{\sigma}$ is faithful. 
    \item If $\sigma$ is tensorial, every closed subset $V$ of $X$ has the form $\sigma(A)$ for some $A \in \bK^c$, and $\Spc \bK^c$ is Noetherian, then $\sigma$ admits an extension via Rickard idempotent functors; this extension is faithful if and only if for each $\bP \in \Spc \bK^c$, the closed set $\eta^{-1}(\overline{\{\bP\}})$ in $X$ has a unique generic point.
    \item If $\Spc \bK^c$ is Noetherian and if there exists a closed surjective map $\rho: \Spc \bK^c \to X$ such that $\sigma(A)=\rho(\suppB(A))$ for all $A \in \bK^c$ (where $\suppB$ denotes the Balmer support) then $\sigma$ admits a faithful extension $\widetilde{\sigma}$ to $\bK$ via Rickard idempotent functors.    
\end{enumerate}
\eth

In the special case that $\sigma$ is the universal Balmer support and the monoidal product on $\bK$ is symmetric, the fact that $\widetilde{\sigma}$ is an extended support appears in \cite[Proposition 7.17]{BF2011} and the fact that the extension of $\sigma$ is faithful appears in \cite[Theorem 4.19(2)]{Stevenson2013} and \cite[Theorem 3.22]{BHS2023b}.

In the case of central cohomological support for finite tensor categories, we are able to use \Cref{maintheorem-general} in combination with results from \cite{NVY3} to prove:

\begin{theorem}[\Cref{stablecat-summary}]
    \label{maintheorem-central}
Let $\bC$ be a finite tensor category with stable category $\ul{\bC}$ and categorical center $C^\bullet$ such that $\Ext^\bullet_{\bC}(A,A)$ is finitely-generated as a $C^\bullet$-module, for each $A \in \bC$. Then the central cohomological support $\suppC$ admits a faithful extension to $\ul{\Ind(\bC)}$ if either of the following conditions holds:
\begin{enumerate}
    \item $\Proj C^\bullet$ is Noetherian and central cohomological support is tensorial;
    \item $\Spc \ul{\bC}$ is Noetherian and $C^\bullet$ is finitely-generated.
\end{enumerate}\end{theorem}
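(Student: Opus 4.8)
The plan is to derive both statements from \Cref{maintheorem-general}, applied to the rigidly-compactly-generated monoidal triangulated category $\bK=\ul{\Ind(\bC)}$ with compact part $\bK^c=\ul{\bC}$ and support datum $(X,\sigma)=(\Proj C^\bullet,\suppC)$. First I would assemble the structural inputs from \cite{NVY3}: that $\ul{\Ind(\bC)}$ is rigidly-compactly-generated with compact part $\ul{\bC}$, that $(\Proj C^\bullet,\suppC)$ is a support datum on $\ul{\bC}$ (here the hypothesis that each $\Ext^\bullet_{\bC}(A,A)$ be finitely generated over $C^\bullet$ enters, to guarantee that $\suppC$ takes closed values), and that $\suppC$ comes equipped with a comparison map $\rho\colon\Spc\ul{\bC}\to\Proj C^\bullet$. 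I would also note that $\ul{\bC}$ has a thick generator, since a finite tensor category has only finitely many isomorphism classes of simple objects and every object has finite length, so the direct sum of the simples thickly generates $\ul{\bC}$. Hence the standing hypothesis of \Cref{maintheorem-general} --- that $\bK^c$ be $\Spc$-Noetherian or have a thick generator --- holds in both cases, while in case (2) the hypothesis that $\Spc\ul{\bC}$ be Noetherian is part of the assumptions.

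For case (1) the task is to check the hypotheses of \Cref{maintheorem-general}(1). The space $\Proj C^\bullet$ is Noetherian by assumption and sober, being the projective spectrum of a graded-commutative ring, hence a Zariski space; and $\suppC$ is tensorial by assumption. The remaining point, that $\suppC$ is comparative, amounts to checking that the comparison map $\rho$ of \cite[Section~1.4]{NVY3} is a left inverse of the universal map $\eta$ --- which sends $x$ to the prime $\{B\in\ul{\bC}\mid x\notin\suppC(B)\}$ --- and satisfies $\eta\rho(\bP)\subseteq\bP$ for every $\bP\in\Spc\ul{\bC}$; I would extract this from the construction of $\rho$ in \cite{NVY3}. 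Comparativity already forces faithfulness of $\suppC$, so the standing faithfulness hypothesis of \Cref{maintheorem-general} is then automatic: if $\suppC(A)=\varnothing$, then $A\in\eta(x)$ for all $x$, hence $A\in\eta\rho(\bP)\subseteq\bP$ for every Balmer prime $\bP$, so $\suppB(A)=\varnothing$ and $A\cong 0$ by faithfulness of the Balmer support on the rigid category $\ul{\bC}$. \Cref{maintheorem-general}(1) then produces a faithful extension $\widetilde{\sigma}$ of $\suppC$ to $\ul{\Ind(\bC)}$.

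For case (2) I would instead apply \Cref{maintheorem-general}(2). Since $\Spc\ul{\bC}$ is assumed Noetherian, the input needed is a closed surjection $\rho\colon\Spc\ul{\bC}\to\Proj C^\bullet$ with $\suppC(A)=\rho(\suppB(A))$ for all $A\in\ul{\bC}$. Finite generation of $C^\bullet$ is precisely the hypothesis under which \cite{NVY3} shows that the comparison map is surjective and realizes $\suppC$ as the image of the Balmer support (and surjectivity reconciles this with any preimage formulation that may appear in \cite{NVY3}). Closedness of $\rho$ is then automatic: since $\Spc\ul{\bC}$ is Noetherian, every closed subset has quasi-compact complement and hence equals $\suppB(B)$ for some $B\in\ul{\bC}$ by realization for the Balmer support, so its image $\rho(\suppB(B))=\suppC(B)$ is closed, $\suppC$ being a support datum. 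Faithfulness of $\suppC$ is likewise automatic here, since $\suppC(A)=\varnothing$ forces $\suppB(A)=\varnothing$ and hence $A\cong 0$; so \Cref{maintheorem-general}(2) applies.

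The hard part is not the bookkeeping above but importing from \cite{NVY3}, under the finiteness hypotheses of the theorem, the two structural facts that actually drive the argument: the construction of the comparison map together with the verification that $\suppC$ is comparative (case (1)), and surjectivity of the comparison map together with the identity $\suppC=\rho\circ\suppB$ (case (2)). Once these and \Cref{maintheorem-general} are in hand, everything else --- soberness of $\Proj C^\bullet$, the thick generator of $\ul{\bC}$, closedness of $\rho$, and faithfulness of $\suppC$ --- is either standard or formal.
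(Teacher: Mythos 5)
Your overall strategy matches the paper's: reduce both cases to \Cref{maintheorem-general} by verifying its hypotheses for $\bK = \ul{\Ind(\bC)}$, $\bK^c = \ul{\bC}$, $(X, \sigma) = (\Proj C^\bullet, \suppC)$, noting that $\ul{\bC}$ always has a thick generator (the sum of the simples) so the standing hypothesis on $\bK^c$ holds. Case (1) is handled exactly as in the paper: tensoriality gives the universal map $\eta$, the comparison-map axioms for $\rho$ are available from \cite[Proposition 8.1.1]{NVY3}, so $\suppC$ is comparative and \Cref{comp-faith} applies.

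The gap is in case (2). You claim the identity $\suppC(A) = \rho(\suppB(A))$ for all $A \in \ul{\bC}$ can be imported from \cite{NVY3} under the hypothesis that $C^\bullet$ is finitely generated, but \cite{NVY3} only supplies the containment $\rho(\suppB(A)) \subseteq \suppC(A)$ (their Corollary 6.2.5(b)); the reverse containment is a new result proved in this paper as \prref{central-supp-rho}, and it requires real work. Given $\mf{p} \in \suppC(A)$, one must construct a Balmer prime $\bP \in \suppB(A)$ with $\rho(\bP) \subseteq \mf{p}$, and this is done via a prime-existence lemma from noncommutative tt-geometry (\cite[Lemma A.1.1]{NVY4}) applied to the thick ideal $\langle \cone(g) : g \text{ homogeneous}, g \notin \mf{p} \rangle$ and the multiplicative collection $\{A \otimes G \otimes A \otimes \cdots \otimes G \otimes A\}$; establishing that these are disjoint is itself nontrivial and rests on the preliminary \prref{var-tens-power}, which shows $\suppC(A \otimes G \otimes A) = \suppC(A)$ using the faithfulness of $\suppC$ and closed-point detection on $\Proj C^\bullet$. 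Only after that, together with surjectivity of $\rho$, does one obtain $\mf{p} \in \rho(\suppB(A))$. Without this argument you cannot invoke \Cref{surj-map-faith}. The remaining details you supply for case (2) --- closedness of $\rho$ via Noetherian realization plus $\suppC$ taking closed values, and faithfulness of $\suppC$ from the inherited containment --- are correct and match \coref{rho-closed-fintenscat} and the paper's surrounding discussion.
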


This shows that one of the facets of \cite[Conjecture E]{NVY3} follows from the other conjectured results. 

The paper is organized as follows: in \Cref{sect-background} we review background on monoidal triangular geometry and support varieties. In \Cref{sect-extension}, we define extended support via Rickard idempotents in the noncommutative case, based on the classical commutative constructions. In \Cref{sect:tensor-of-balmer}, we prove that the extension of the Balmer support satisfies a generalized tensor prodouct property, and along the way give some useful descriptions of the extended Balmer supports of the Rickard idempotents themselves. In \Cref{sect-extcomp}, we prove that any extension of a support equipped with a comparison map (that is, a map of topological spaces with compatibility properties that allow a ``comparison" between the support in question and the Balmer support) is faithful, as long as either the Balmer spectrum of $\bK^c$ is Noetherian or $\bK^c$ is has a thick generator. As a consequence, we show that our extension of the Balmer support is the unique extension of the Balmer support which is both faithful and satisfies the generalized tensor product property. In \Cref{sect-tensorial}, we show that if the Balmer spectrum is Noetherian, then any faithful and realizing support which satisfies a generalized tensor product property and admits a faithful extension also admits a comparison map. In \Cref{sect-surj}, we prove that if a support map is induced by a surjective map from a Noetherian Balmer spectrum, then the extension of that support is faithful. Lastly, in \Cref{sect-centralcoh}, we turn our attention to the specific case of stable categories of finite tensor categories, and prove \Cref{maintheorem-central}, that is, that extended central cohomological support is faithful under certain assumptions. We then apply these results to answer a question of Pevtsova--Witherspoon \cite{pevtsova2015tensor}, which recovers using different techniques a theorem of Negron--Pevtsova \cite{NP2023a}. 

\begin{remark}[Note on further applications]
\rm{Beyond the applications to finite tensor categories presented in Section \ref{sect-centralcoh}, the results of this paper are applied in joint work of the second author with Nakano and Yakimov \cite{NVY5} to study stratification in monoidal triangular geometry and prove a version of Balmer's Nerves of Steel Conjecture for crossed product categories and in joint work between the second author with Solberg and Witherspoon \cite{SVW} to study Balmer spectra of some monoidal triangulated categories related to bimodules for self-injective algebras.}
\end{remark}

\subsection*{Acknowledgements}
We thank Greg Stevenson and the anonymous referee for helpful suggestions which improved the paper. The first author was partially supported by NSF graduate student fellowship DGE-2140743.
The second author was partially supported by NSF postdoctoral fellowship DMS-2103272 and by an AMS-Simons Travel Grant.

\section{Monoidal triangular geometry and support theories}
\label{sect-background}

In this section, we review the basic background of monoidal triangular geometry. For additional details on classical (commutative) tensor-triangular geometry see \cite{Balmer2005}. For details about the noncommutative generalization, see \cite{NVY2022}.

Let $\bK$ be a triangulated category, with shift functor $\Sigma$. For background on triangulated categories, see \cite{Neeman2001}. Recall that a triangulated subcategory is one closed under cones and $\Sigma^{\pm}$; a thick subcategory is a full triangulated subcategory closed under direct summands; a localizing subcategory is a full thick subcategory closed under arbitrary set-indexed coproducts (if such coproducts exist). We briefly remark that thickness follows automatically from the other properties of localizing subcategories as long as $\bK$ is closed under set-indexed coproducts via
the Eilenberg swindle, but the thickness hypothesis is important if such coproducts do not exist.

Throughout, we assume that $\bK$ is a rigidly-compactly-generated monoidal triangulated category. To spell this out:
\begin{enumerate}
    \item $\bK$ is triangulated, and contains all set-indexed coproducts;
    \item $\bK$ is generated, as a localizing subcategory, by $\bK^c$, the subcategory of $\bK$ consisting of compact objects (that is, $\bK$ is the smallest localizing subcategory of $\bK$ which contains $\bK^c$);
    \item $\bK$ is equipped with a biexact tensor product $- \otimes -$ with tensor unit $\unit$ satisfying the usual associativity and unit axioms \cite[Definition 2.1.1]{EGNO2015}, and the tensor product commutes with arbitrary set-indexed coproducts;
    \item $\unit \in \bK^c$, and if $A$ and $B$ are in $\bK^c$, then so is $A \otimes B$ (in particular, $\bK^c$ is a tensor subcategory of $\bK$);
    \item $\bK^c$ is rigid, as a monoidal category (that is, every compact object has a left and right dual, \cite[Definition 2.10.1, 2.10.2]{EGNO2015}).
\end{enumerate}
Note that we do not assume the tensor product is symmetric or braided. Our primary motivating example, which we explore in more detail in Section \ref{sect-centralcoh}, is the case of stable categories of finite tensor categories. 

\bde{bigsmall}    We call a monoidal triangulated category {\it{big}} if it satisfies (1)-(5) above, that is, if it is a rigidly-compactly-generated monoidal triangulated category. We call a monoidal triangulated category {\it{small}} if it is equivalent to $\bK^c$ for some big monoidal triangulated category $\bK$. We call $\bK^c$ {\it{finitely-generated}} if there exists an object $G$ in $\bK^c$ such that the smallest thick subcategory containing $G$ is $\bK^c$. 
\ede

\bre{all-essentially-small-are-small}
By the well-developed theory of ind-completions of categories (see e.g.~ \cite{Krause2024} and the references contained therein), basically every naturally-occurring essentially small triangulated category will be small in the sense of \deref{bigsmall}.
\ere

Recall that a {\it thick ideal} of $\bK^c$ is a thick subcategory closed under tensor products with arbitrary objects of $\bK^c$. Given a collection of objects $\bS$ of $\bK^c$, we will denote $\langle \bS \rangle$ the smallest thick ideal containing $\bS$. Likewise, we denote by $\Loc(\bS)$ and $\Loc^{\otimes}(\bS)$ the smallest localizing subcategory and smallest localizing ideal of $\bK$ generated by any collection of objects $\bS$ in $\bK$. A proper thick ideal $\bP$ is called {\it prime} if 
\[
\bI \otimes \bJ \subseteq \bP \Rightarrow \bI \text{ or } \bJ \subseteq \bP
\]
for all thick ideals $\bI$ and $\bJ$. The category $\bK^c$ is called {\it{completely prime}} if every prime ideal is completely prime, that is, for a prime ideal $\bP$, we have $A \otimes B$ in $\bP$ implies $A$ or $B$ in $\bP$, for any objects $A$ and $B$ in $\bK^c$. The {\it{Balmer spectrum}} $\Spc \bK^c$ is defined as the topological space consisting of prime thick ideals of $\bK^c$, with closed sets
\[
\suppB(\bS) := \{ \bP \in \Spc \bK^c : \bP \cap \bS = \varnothing\},
\]
for any collection $\bS$ of objects of $\bK^c$. Appropriately, we call $\bK^c$ {\it $\Spc$-Noetherian} if $\Spc \bK^c$ is Noetherian. The Balmer spectrum is useful conceptually because under mild conditions, it gives a topological classification of the thick ideals of $\bK^c$ by Thomason subsets of $\Spc \bK^c$. In general, for a topological space $X$, $\Thom(X)$ will denote the set of Thomason subsets of $X$ (recall that a set is Thomason if it is a union of closed sets, each of which has a quasicompact complement).
The classification theorem is given by the following:

\begin{theorem}
\label{class-ideals}
When $\bK^c$ is commutative, completely prime, $\Spc$-Noetherian, or finitely-generated, there are inverse bijections between the thick ideals of $\bK^c$ and the Thomason subsets of $\Spc \bK^c$ given by
\begin{center}
\begin{tikzcd}
\bI \arrow[rr, maps to]                &  & \Phi_{\suppB}(\bI)  := \bigcup_{A \in \bI} \suppB(A)          \\
&&\\
\Ideals(\bK^c) \arrow[rr, bend left]   &  & \Thom(\Spc \bK^c) \arrow[ll, bend left] \\
&&\\
\Theta_{\suppB}(S) :=\{A \in \bK^c: \suppB(A) \subseteq S\} &  & S \arrow[ll, maps to]                  
\end{tikzcd}\end{center}
This bijection restricts to a bijection between principal thick ideals and closed sets of $\Spc \bK^c$ of the form $\suppB(A)$, for some $A \in \bK^c$. When $\Spc \bK^c$ is Noetherian, this restricts to a bijection between principal ideals and closed subsets of $\Spc \bK^c$, that is, every closed subset of $\Spc \bK^c$ is of the form $\suppB(A)$ for some $A \in \bK^c.$
\end{theorem}

\begin{proof}
    See \cite{Balmer2005} for the case that $\bK^c$ is commutative, \cite{MallickRay2023} for the case that $\bK^c$ is completely prime, \cite{Rowe} for the case that $\bK^c$ is $\Spc$-Noetherian, and \cite{NVY4} for the case that $\bK^c$ is finitely-generated (i.e., has a thick generator). 
\end{proof}

See also \cite{DDM} for a comprehensive discussion on thick ideal classification outside the contexts of Theorem \ref{class-ideals}. 

In practice, it is difficult to determine the Balmer spectrum concretely. To do so, we turn to the theory abstract support varieties. A {\it support datum} $(X,\sigma)$ on $\bK^c$ consists of a map $\sigma$ from objects of $\bK^c$ to closed subsets of some $T_0$ topological space $X$, satisfying 
\begin{enumerate}
    \item $\sigma(\unit)=X, \sigma(0)=\varnothing$;
    \item $\sigma(A \oplus B)= \sigma(A) \cup \sigma(B)$ for all $A, B \in \bK^c$;
    \item $\sigma(\Sigma A) = \sigma(A)$ for all $A \in \bK^c;$
    \item if $A \to B \to C \to \Sigma A$ is a distinguished triangle in $\bK^c$, then $\sigma(A) \subseteq \sigma(B) \cup \sigma(C);$
    \item $\sigma(A \otimes B) \subseteq \sigma(A) \cap \sigma(B)$ for all $A,B \in \bK^c$.
\end{enumerate}
When $X$ is clear from context (or if the particular topological space $X$ is immaterial to the discussion at hand), we will refer to $\sigma$ as the support datum, or, alternatively, as the support map. We will call a support map $\sigma$ {\it tensorial} if it satisfies
\begin{enumerate}
    \item[(6)] $\bigcup_{B \in \bK^c} \sigma(A \otimes B \otimes C) = \sigma(A) \cap \sigma(C)$, for all $A$ and $C$ in $\bK^c$;
\end{enumerate}
{\it faithful} if
\begin{enumerate}
    \item[(7)] $\sigma(A)= \varnothing \Rightarrow A \cong 0$, for all $A \in \bK^c$; 
\end{enumerate}
and {\it realizing} if
\begin{enumerate}
    \item[(8)] for any closed subset $V$ of $X$, there exists an object $A$ with $\sigma(A)=V$.
\end{enumerate}

\bre{t-zero}
The assumption that the topological space $X$ be $T_0$ is not included in the axioms put forward in \cite{Balmer2005}, but we include it here since it will be necessary throughout the paper. Note that the Balmer spectrum $\Spc \bK^c$ is automatically $T_0$. 
\ere

\bex{balmer-supp}
The pair $(\Spc \bK^c, \suppB)$ is a tensorial and faithful support datum. Recall that we are assuming throughout that $\bK^c$ is rigid; this is necessary for the faithfulness of $\suppB$ (\cite[Proposition 4.1.1]{NVY2}). If $\Spc \bK^c$ is Noetherian, then $\suppB$ is also realizing, by \Cref{class-ideals}. 
\eex

Tensoriality is particularly important, due to the universal property of the Balmer spectrum:

\begin{theorem}[{\cite[Theorem 5.3]{BKS2007}}]
\label{Balmer-univ}
If $(X,\sigma)$ is a tensorial support datum on $\bK^c$, then there is a unique continuous map 
\[
X \xrightarrow{\eta} \Spc \bK^c
\]
such that $\sigma(A)=\eta^{-1} (\suppB(A))$ for all $A \in \bK^c.$ 
\end{theorem}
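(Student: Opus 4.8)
The plan is to define $\eta$ by the only formula that could possibly work, check that it lands in $\Spc \bK^c$, and then read off continuity; uniqueness will come essentially for free. For a point $x \in X$, set
\[
\eta(x) := \{\, A \in \bK^c : x \notin \sigma(A) \,\}.
\]
Observe first that this choice is forced, which settles uniqueness: if $\eta'$ is any map with $\sigma(A) = (\eta')^{-1}(\suppB(A))$ for all $A$, then for each $A$ we have $x \in \sigma(A) \iff \eta'(x) \in \suppB(A) \iff A \notin \eta'(x)$, so $\eta'(x) = \{\,A : x \notin \sigma(A)\,\} = \eta(x)$. Hence only existence requires work.

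First I would verify that $\eta(x)$ is a proper thick ideal. Properness is axiom (1): $x \in X = \sigma(\unit)$ forces $\unit \notin \eta(x)$. Closure under $\Sigma^{\pm 1}$ and under direct summands is axioms (3) and (2). The two-out-of-three property along a distinguished triangle $A \to B \to C \to \Sigma A$ follows by applying axiom (4) to the appropriate rotation: e.g.\ from the triangle $B \to C \to \Sigma A \to \Sigma B$ one gets $\sigma(B) \subseteq \sigma(C) \cup \sigma(A)$, so $A, C \in \eta(x)$ implies $B \in \eta(x)$, and the other two cases are analogous. The ideal property is immediate from axiom (5): $\sigma(A \otimes B) \subseteq \sigma(A) \cap \sigma(B) \subseteq \sigma(A)$ and symmetrically, so $A \in \eta(x)$ gives $A \otimes B, B \otimes A \in \eta(x)$ for all $B \in \bK^c$.

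The crux — and the one point where the argument genuinely departs from the commutative (Balmer) case, requiring tensoriality rather than merely axiom (5) — is primality. Suppose $\bI \otimes \bJ \subseteq \eta(x)$ while $\bI \not\subseteq \eta(x)$ and $\bJ \not\subseteq \eta(x)$; choose $A \in \bI$ and $C \in \bJ$ with $x \in \sigma(A)$ and $x \in \sigma(C)$ (these exist precisely because $\bI,\bJ \not\subseteq \eta(x)$). By axiom (6),
\[
\sigma(A) \cap \sigma(C) = \bigcup_{B \in \bK^c} \sigma(A \otimes B \otimes C),
\]
so there is some $B \in \bK^c$ with $x \in \sigma(A \otimes B \otimes C)$, i.e.\ $A \otimes B \otimes C \notin \eta(x)$. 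But $A \otimes B \in \bI$ since $\bI$ is an ideal, so $A \otimes B \otimes C = (A \otimes B) \otimes C$ is one of the generating tensor products of $\bI \otimes \bJ$, hence lies in $\bI \otimes \bJ \subseteq \eta(x)$ — a contradiction. Therefore $\eta(x)$ is prime, and $\eta \colon X \to \Spc \bK^c$ is well-defined.

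Finally, for every $A \in \bK^c$ and $x \in X$ we have $x \in \eta^{-1}(\suppB(A)) \iff A \notin \eta(x) \iff x \in \sigma(A)$, which is the claimed identity $\sigma(A) = \eta^{-1}(\suppB(A))$. Continuity is then formal, since every closed subset of $\Spc \bK^c$ has the form $\suppB(\bS) = \bigcap_{A \in \bS}\suppB(A)$, whence $\eta^{-1}(\suppB(\bS)) = \bigcap_{A \in \bS}\sigma(A)$ is closed in $X$. Together with the uniqueness observation above, this finishes the proof; I expect the primality step to be the only delicate part, everything else being a direct translation of the support-datum axioms.
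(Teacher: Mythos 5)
The paper itself does not prove this theorem; it cites it from \cite[Theorem 1.3.1]{NVY2022}, so there is no in-paper proof to compare against. Your argument is correct and is essentially the standard one from that reference: you define $\eta(x)$ by the formula forced by the desired identity, check it is a proper thick ideal from axioms (1)--(5), and use tensoriality (axiom (6)) to verify primality --- noting, as you do, that tensoriality rather than the mere subadditivity $\sigma(A\otimes B)\subseteq\sigma(A)\cap\sigma(B)$ is what makes the prime-ideal step go through, since the latter would only yield complete primality in the braided setting. The only small thing worth making explicit is that the generating objects of $\bI\otimes\bJ$ are precisely the $I\otimes J$ with $I\in\bI$, $J\in\bJ$, so $(A\otimes B)\otimes C\in\bI\otimes\bJ$ indeed; you use this implicitly and it is correct.
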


The following result about the universal map $\eta$ is implied by the proof of \cite[Theorem 6.2.1(b)(ii)]{NVY2022}.

\bpr{eta-inj}
Let $(X,\sigma)$ be a realizing support datum on a monoidal triangulated category $\bK^c$. Then the universal map $\eta: X \to \Spc \bK^c$ as in \Cref{Balmer-univ} is injective. 
\epr

Note this proposition uses the fact that $X$ is assumed to be $T_0$, which is an assumption we make about all support data in this paper. 

\bnota{phi-theta}
Given a support datum $\sigma$, we have analogues of the maps from \Cref{class-ideals}: given a thick ideal $\bI$ of $\bK^c$, and a subset $S$ of $X$, we will denote
\begin{align*}
    \Phi_{\sigma}(\bI) &:= \bigcup_{A \in \bI} \sigma(A), \text{ and} \\
    \Theta_{\sigma}(S)&:=\{A \in \bK^c : \sigma(A) \subseteq S\}.
\end{align*}
When there is no risk of confusion, we will drop the subscript, and write $\Phi$ and $\Theta$ for $\Phi_{\sigma}$ and $\Theta_{\sigma}$, respectively.
\end{notation}

This paper will be primarily concerned with extensions of a support $(X,\sigma)$ to the big category $\bK$. By an extension, we mean the following. 

\bde{ext-arb-supp}
Let $(X,\sigma)$ be a support datum on $\bK^c$. We say that $(X,\widetilde{\sigma})$ is an {\it extension} of $(X,\sigma)$ if $\widetilde{\sigma}$ is a map from $\bK$ to (not necessarily closed) subsets of $X$, and
\begin{enumerate}
    \item $\widetilde{\sigma}(A)=\sigma(A)$ for all $A$ compact;
    \item $\widetilde{\sigma}(\coprod_{i \in I} A_i)= \bigcup_{i\in I}\widetilde{\sigma}(A_i)$ for any set-indexed collection $\{A_i : i \in I\}$ of objects of $\bK$;
    \item $\widetilde{\sigma}(\Sigma A) = \widetilde{\sigma}(A)$ for all $A \in \bK;$
    \item if $A \to B \to C \to \Sigma A$ is a distinguished triangle in $\bK$, then $\widetilde{\sigma}(A) \subseteq \widetilde{\sigma}(B) \cup \widetilde{\sigma}(C);$
    \item $\widetilde{\sigma}(A \otimes B) \subseteq \widetilde{\sigma}(A) \cap \widetilde{\sigma}(B)$ for all $A,B \in \bK$.
\end{enumerate}
We call $\widetilde{\sigma}$ {\it tensorial} if
\begin{enumerate}
    \item[(6)]  $\bigcup_{B \in \bK^c} \widetilde{\sigma}(A \otimes B \otimes C) = \widetilde{\sigma}(A) \cap \widetilde{\sigma}(C)$, for all $A$ in $\bK$ and $C \in \bK^c$,
\end{enumerate}
and {\it faithful} if
\begin{enumerate}
    \item[(7)] $\widetilde{\sigma}(A)= \varnothing \Rightarrow A \cong 0$, for all $A \in \bK$. 
\end{enumerate}
\ede

The following theorem is our primary motivation in providing an extension for a support: it is a necessary condition, in many cases, for providing concrete realizations of the Balmer spectrum and Balmer support.

\begin{theorem}
[{\cite[Theorem 1.4.1]{NVY2022}}]
\label{nvy-restated}
Let $(X,\sigma)$ be a tensorial, faithful, realizing support with value in a Zariski space $X$ on a small monoidal triangulated category $\bK^c$. If $(X,\sigma)$ admits a tensorial, faithful extension $(X,\widetilde{\sigma})$, then there is a homeomorphism 
\[
\eta: X \xrightarrow{\sim} \Spc \bK^c
\]
such that $\eta^{-1}(\suppB(A)) = \sigma(A)$ for all $A \in \bK^c$. 
\end{theorem}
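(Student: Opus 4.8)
\emph{Strategy.} The plan is to produce the comparison map $\eta$ from Balmer's universal property, promote it to a topological embedding using realization, and then use the faithful extension to force surjectivity; essentially all of the difficulty lies in this last step. Since $\sigma$ is tensorial, \Cref{Balmer-univ} provides a unique continuous $\eta\colon X\to\Spc\bK^c$ with $\sigma(A)=\eta^{-1}(\suppB(A))$ for all $A\in\bK^c$, which is already the compatibility asserted in the conclusion; so only the statement that $\eta$ is a homeomorphism remains. As $\sigma$ is realizing and $X$ is $T_0$, $\eta$ is injective by \prref{eta-inj}. Furthermore $\eta$ is closed onto its image: every closed subset of $X$ has the form $\sigma(A)$ for some compact $A$ (support data take closed values, and realization makes the assignment $A\mapsto\sigma(A)$ surjective onto closed sets), and $\eta(\sigma(A))=\eta(\eta^{-1}(\suppB(A)))=\suppB(A)\cap\im\eta$ is closed in the subspace topology on $\im\eta$. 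Thus $\eta$ is a continuous closed bijection onto $\im\eta$, hence a homeomorphism onto $\im\eta$, and $\im\eta$ is itself a Zariski space. Everything now reduces to the single assertion $\im\eta=\Spc\bK^c$ — and this genuinely uses the hypothesis on the extension, since without it $\eta$ need not be surjective.

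\emph{Surjectivity via Rickard idempotents.} Suppose for contradiction that some $\bP\in\Spc\bK^c$ lies outside $\im\eta$; the goal is to exhibit a nonzero object of $\bK$ with empty extended support, contradicting faithfulness of $\widetilde\sigma$. One works with the Rickard/Balmer--Favi idempotent formalism, indexed through the homeomorphism $X\cong\im\eta$. For compact $A$, the Thomason-closed set $\suppB(A)$ gives a smashing localization triangle $\Gamma_A\unit\to\unit\to L_A\unit$ in which $\Gamma_A\unit$ is a homotopy colimit of objects of the thick ideal $\langle A\rangle$; the extension axioms (2) and (4) then give $\widetilde\sigma(\Gamma_A\unit)\subseteq\bigcup_{B\in\langle A\rangle}\sigma(B)\subseteq\sigma(A)$, while the tensoriality axiom (6) for $\widetilde\sigma$, applied to the identities $\Gamma_A\unit\otimes\Gamma_A\unit\cong\Gamma_A\unit$ and $\Gamma_A\unit\otimes L_A\unit\cong0$, bounds these supports from below as well. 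Because $X$ is Noetherian and sober one can iterate such localizations, as in Stevenson's constructions, to obtain for each $x\in X$ an idempotent object concentrated at $x$, nonzero whenever the corresponding closed subset of $X$ is (here one uses faithfulness of $\sigma$ and rigidity of $\bK^c$, hence faithfulness of $\suppB$). Since $\bP$ is a prime ideal, $\Loc^{\otimes}(\bP)$ is a proper localizing ideal of $\bK$ — as $\unit$ is compact and $\unit\notin\bP$ — so the associated localization $L_{\bP}\unit$ is nonzero; localizing $L_{\bP}\unit$ further, using the Noetherian structure of $X$, to concentrate it at $\bP$ produces a nonzero $M\in\bK$ whose extended support is forced into $\{\bP\}\cap\im\eta=\varnothing$. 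Then $\widetilde\sigma(M)=\varnothing$ with $M\neq0$ contradicts faithfulness of $\widetilde\sigma$. Hence $\bP\in\im\eta$, $\eta$ is surjective, and combined with the previous paragraph $\eta$ is the desired homeomorphism.

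\emph{The main obstacle.} The heart of the argument is this surjectivity step: one must manufacture, for a prime outside the image, an object that is simultaneously nonzero and has empty extended support, using only the axioms of an arbitrary tensorial faithful extension rather than any preferred construction. The genuinely delicate issues are: (i) $\Spc\bK^c$ is not assumed Noetherian, so the idempotent objects must be assembled from the Noetherian--sober structure of $X$ transported along $\eta$, not from $\Spc\bK^c$ directly; (ii) proving that the relevant idempotents do not vanish, which rests on faithfulness of $\sigma$ together with rigidity of $\bK^c$; and (iii) controlling the extended support of these idempotents through the triangle axiom and, crucially, the tensor-product axiom (6) for $\widetilde\sigma$ — mere sub-multiplicativity would not suffice.
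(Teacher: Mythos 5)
The paper does not actually prove this statement; it is cited verbatim from \cite[Theorem 1.4.1]{NVY2022}, so there is no internal proof to compare against. On its own terms, the formal reductions in your proposal are correct: $\eta$ exists with $\sigma=\eta^{-1}\circ\suppB$ by tensoriality, $\eta$ is injective by \prref{eta-inj} since $\sigma$ is realizing and $X$ is $T_0$, and $\eta$ is closed onto its image by the realization argument you give. Everything does come down to surjectivity. But your surjectivity step has a genuine gap, exactly where you flag the heart of the matter. You want, for $\bP\notin\im\eta$, a nonzero object ``concentrated at $\bP$'' with empty $\widetilde{\sigma}$-support. The only idempotents available are $\itGamm_{\Theta_{\sigma}(V)}\unit$ and $L_{\Theta_{\sigma}(V)}\unit$ for specialization-closed $V\subseteq X$, together with $L_{\bP}\unit$. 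Chasing what these control, one finds $\widetilde{\sigma}(L_{\bP}\unit)\subseteq\eta^{-1}(\overline{\{\bP\}})$, and this set is typically a nonempty closed subset of $X$. No further $\Theta_{\sigma}$-localization can excise the points $x$ with $\bP\subsetneq\eta(x)$ while preserving nonvanishing: those idempotents cannot distinguish $\bP$ from its specializations lying in $\im\eta$, because $\sigma$ sees both through the very same subsets of $X$. ``Concentrate at $\bP$'' has no $X$-side meaning when $\bP$ is not a point of $X$, and item (i) of your list of obstacles is not a technicality you can route around but the place where the approach breaks.

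The actual route is more indirect and passes through a classification of thick ideals, not a direct attack on a hypothetical missing prime. One first proves a detection lemma: for compact $A,B$ with $\sigma(A)\subseteq\sigma(B)$, one has $A\in\langle B\rangle$. This uses precisely the mechanism you set up: tensor the idempotent triangle $\itGamm_{\langle B\rangle}\unit\to\unit\to L_{\langle B\rangle}\unit$ with $A$, use tensoriality axiom (6) for $\widetilde{\sigma}$ and the fact that $L_{\langle B\rangle}\unit\otimes C\otimes B\cong 0$ for every $C\in\bK^c$ to get $\widetilde{\sigma}(L_{\langle B\rangle}A)\subseteq\sigma(A)\setminus\sigma(B)=\varnothing$, conclude $L_{\langle B\rangle}A\cong 0$ by faithfulness of $\widetilde{\sigma}$, then $A\cong\itGamm_{\langle B\rangle}A\in\Loc(\langle B\rangle)$ and hence $A\in\langle B\rangle$ by Neeman's compactness criterion. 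Detection together with realization and Noetherianity of $X$ shows $\Phi_{\sigma}$ and $\Theta_{\sigma}$ are inverse bijections between thick ideals of $\bK^c$ and specialization-closed subsets of $X$. Then the condition that $\bP$ is prime ($\bI\otimes\bJ\subseteq\bP\Rightarrow\bI\subseteq\bP$ or $\bJ\subseteq\bP$) translates, via tensoriality of $\sigma$, into a primality condition on $\Phi_{\sigma}(\bP)$ that, in a Noetherian sober space, forces $\Phi_{\sigma}(\bP)=\mathcal{Z}(x)$ for a unique $x$; since $\Theta_{\sigma}(\mathcal{Z}(x))=\eta(x)$, every prime is in $\im\eta$. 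You had all the right ingredients --- the Rickard idempotents, the role of axiom (6), Neeman's compactness --- but the last step has to go through the ideal classification rather than through a localization at $\bP$.
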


For reference later, we note the elementary fact that  extended supports are compatible with taking localizing subcategories in the following way:

\ble{support-local}
Let $\sigma$ be a support datum on $\bK^c$ with extension $\widetilde{\sigma}$. Let $\bI$ be a thick ideal of $\bK^c$, and let $\Loc(\bI)$ denote the smallest localizing subcategory of $\bK$ containing $\bI$. If $A$ is in $\Loc(\bI)$, then $\widetilde{\sigma}(A) \subseteq \Phi_{\sigma} (\bI).$ Furthermore, if $B \in \bK$ and $C$ is in the localizing tensor ideal generated by $B$, then $\widetilde{\sigma}(C) \subseteq \widetilde{\sigma}(B)$.
\ele

\begin{proof}
    Using the axioms of extension, it is clear that the collection of objects
    \[
    \bL := \{ D \in \bK : \widetilde{\sigma}(D) \subseteq \Phi_{\sigma}(\bI)\}
    \]
    is a localizing subcategory. Since $\bI \subseteq \bL$ by the fact that $\sigma = \widetilde{\sigma}$ on compact objects, it follows that $\Loc(\bI) \subseteq \bL$. The second claim is similar.
\end{proof}

\section{Rickard idempotents and formal extensions of support}
\label{sect-extension}

In this section we review background on Rickard idempotent functors, and subsequently construct formal extensions of support theories.

Let $\bK$ be a big monoidal triangulated category, and let $\bI$ be a thick ideal of $\bK^c$. Then the Verdier quotient functor
\[
\bK \to \bK / \Loc(\bI)
\]
admits a right adjoint, by a general argument of Neeman (see \cite[Lemma 1.7]{Neeman1992}) which was inspired by older arguments of Adams and Bousfield. By composing the quotient functor with its adjoint, one obtains a functor 
$L_{\bI}: \bK \to \bK$ which takes value in $\Loc(\bI)^{\perp}$, the subcategory of $\bK$ of objects $B$ such that $\Hom_{\bK}(A,B)=0$ for all $A \in \Loc(\bI)$. By analogous argument, there exists a right adjoint to the inclusion functor
\[
\Loc(\bI) \to \bK,
\] and again by composition we get a functor $\it{\Gamma}_{\bI}: \bK \to \bK$ which takes value in $\Loc(\bI)$; see also \cite[\S7.2]{Krause2010}. A similar construction was given by Miller in \cite{Miller1992} for the category of spectra. Given an object $A$, the object $L_{\bI}A$ can be described directly by a process of repeated homotopy colimits given by ranging over all possible maps from objects of $\bI$. This construction was also used directly by Rickard in \cite{Rickard1997}, who subsequently used the functors $\it{\Gamma}_{\bI}$ and $L_{\bI}$ in joint work with Benson and Carlson to define the first extended supports in modular representation theory \cite{BCR1995,BCR1996}. The functors $\it{\Gamma}_{\bI}$ and $L_{\bI}$, which are now commonly referred to as {\it{Rickard idempotent functors}}, satisfy the following universal property:

\begin{theorem}[\cite{Neeman1992,Rickard1997}]\label{tensor-idem}
    For any thick subcategory $\bI$ of $\bK^c$, there exist unique functors $L_{\bI}, \itGamm_{\bI}: \bK \to \bK$ such that for each $A \in \bK$, there is a distinguished triangle
    \[
    \itGamm_{\bI} A \to A \to L_{\bI}A \to \Sigma \itGamm_{\bI} A
    \]
    with $\itGamm_{\bI}A \in \Loc(\bI)$ and $L_{\bI} A \in \Loc(\bI)^\perp.$
\end{theorem}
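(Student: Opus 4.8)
The plan is to realize the pair $(\itGamm_{\bI}, L_{\bI})$ as the Bousfield (co)localization attached to the localizing subcategory $\Loc(\bI)$, and then to extract the triangle and its uniqueness by routine orthogonality arguments in a triangulated category. The one genuinely external input — and the step I expect to be the main obstacle — is the existence of a right adjoint $i^{!}$ to the inclusion $i \colon \Loc(\bI) \hookrightarrow \bK$. I would argue this as follows: since $\bK$ is compactly generated, $\bK^{c}$ is essentially small, hence so is the thick subcategory $\bI \subseteq \bK^{c}$; choosing a set $\bS$ of representatives of its isomorphism classes gives $\Loc(\bI) = \Loc(\bS)$, and each object of $\bS$, being compact in $\bK$, remains compact in $\Loc(\bS)$. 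Thus $\Loc(\bI)$ is itself a compactly generated triangulated category, so it satisfies Brown representability. As $i$ preserves set-indexed coproducts ($\Loc(\bI)$ being localizing), Brown representability produces the right adjoint $i^{!}$ (Neeman, \cite[Lemma 1.7]{Neeman1992}; see also \cite[\S7.2]{Krause2010}). Alternatively one can produce a right adjoint to the Verdier quotient $\bK \to \bK/\Loc(\bI)$, the route indicated in the discussion preceding the theorem; the two give the same functors.

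Granting this, set $\itGamm_{\bI} := i \circ i^{!} \colon \bK \to \bK$, which by construction takes values in $\Loc(\bI)$, and for $A \in \bK$ let $\epsilon_{A} \colon \itGamm_{\bI} A \to A$ be the counit. Completing $\epsilon_{A}$ to a distinguished triangle $\itGamm_{\bI} A \xrightarrow{\epsilon_{A}} A \to L_{\bI} A \to \Sigma \itGamm_{\bI} A$ defines $L_{\bI} A$. To see $L_{\bI} A \in \Loc(\bI)^{\perp}$: for every $X \in \Loc(\bI)$ the adjunction gives, compatibly with $\epsilon_{A}$, an isomorphism $\Hom_{\bK}(X, \itGamm_{\bI} A) \xrightarrow{\sim} \Hom_{\bK}(X, A)$, and likewise after applying any power of $\Sigma$; feeding this into the long exact sequence obtained by applying $\Hom_{\bK}(X, -)$ to the triangle forces $\Hom_{\bK}(X, L_{\bI} A) = 0$. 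So the triangle has the two required properties.

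For uniqueness and functoriality I would invoke the following orthogonality principle repeatedly: if $G \in \Loc(\bI)$ and $L \in \Loc(\bI)^{\perp}$ then $\Hom_{\bK}(G, \Sigma^{j} L) = 0$ for all $j$, so applying $\Hom_{\bK}(G, -)$ to any triangle $G' \to B \to L' \to \Sigma G'$ with $G' \in \Loc(\bI)$ and $L' \in \Loc(\bI)^{\perp}$ shows $\Hom_{\bK}(G, G') \to \Hom_{\bK}(G, B)$ is a bijection. Given a second triangle $\itGamm' A \to A \to L' A \to \Sigma \itGamm' A$ with the stated properties, applying this with $G = \itGamm' A$ and then with $G = \itGamm_{\bI} A$ yields mutually inverse maps $\itGamm_{\bI} A \leftrightarrows \itGamm' A$ over $A$, hence a unique isomorphism of the two triangles fixing $A$ (uniqueness on the third term because $\Hom_{\bK}(\Sigma \itGamm' A, L_{\bI} A) = 0$). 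The same principle makes the construction functorial: a morphism $\varphi \colon A \to B$ lifts uniquely along $\Hom_{\bK}(\itGamm_{\bI} A, \itGamm_{\bI} B) \xrightarrow{\sim} \Hom_{\bK}(\itGamm_{\bI} A, B)$ to $\itGamm_{\bI}\varphi \colon \itGamm_{\bI} A \to \itGamm_{\bI} B$ with $\epsilon_{B} \circ \itGamm_{\bI}\varphi = \varphi \circ \epsilon_{A}$, which extends uniquely to a morphism of triangles and thereby defines $L_{\bI}\varphi$; uniqueness of all these lifts gives compatibility with composition and identities, and promotes the pointwise isomorphism above to an isomorphism of functors. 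Beyond the adjoint-existence input, the only point requiring care is precisely this last bookkeeping, but it is forced by the uniqueness of the lifts.
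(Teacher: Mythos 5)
Your proof is correct and follows the same route the paper outlines in the discussion preceding the theorem: obtain the right adjoint to $\Loc(\bI)\hookrightarrow\bK$ via Brown representability (the Neeman/Rickard input the paper cites), define $\itGamm_{\bI}$ as the composite, take the cone to get $L_{\bI}$, and deduce uniqueness and functoriality from the orthogonality of $\Loc(\bI)$ and $\Loc(\bI)^{\perp}$. The paper does not spell this out but delegates it to \cite{Neeman1992,Rickard1997}; your write-up fills in exactly those standard details.
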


We summarize some basic properties satisfied by the Rickard idempotent functors which we will use throughout.

\begin{lemma}
\label{lemma:commutes_with_Gamma_L}
Let $\bI$ and $\bJ$ be thick ideals of $\bK^c$ such that $\bJ \subseteq \bI$, and let $A \in \bK$. Then:
\begin{enumerate}
    \item $\itGamm_{\bI}A \cong A $ if and only if $L_{\bI} A \cong 0$ if and only if $A \in \Loc(\bI)$;
    \item $A \otimes \itGamm_{\bI} \unit \cong \itGamm_{\bI}A \cong \itGamm_{\bI} \unit \otimes A;$
    \item $A \otimes L_{\bI} \unit \cong L_{\bI}A \cong L_{\bI} \unit \otimes A;$
    \item $L_{\bI} \unit \otimes \itGamm_{\bI} \unit \cong 0;$
    \item $\itGamm_{\bI} \unit \otimes \itGamm_{\bJ} \unit \cong \itGamm_{\bJ} \unit$ and $L_{\bI} \unit \otimes L_{\bJ} \unit \cong L_{\bI} \unit;$
    \item there is a unique morphism of triangles
    \begin{center}
        \begin{tikzcd}
\itGamm_{\bJ}\unit \arrow[d] \arrow[r] & \unit \arrow[r] \arrow[d, no head, Rightarrow, no head] & L_{\bJ} \unit \arrow[r] \arrow[d] & {} \\
\itGamm_{\bI} \unit \arrow[r]          & \unit \arrow[r]                                & L_{\bI} \unit \arrow[r]           & {}
\end{tikzcd}
    \end{center}
    (that is, the horizontal maps are the triangles given by Theorem \ref{tensor-idem}, and the maps $\itGamm_{\bJ} \unit\to \itGamm_{\bI} \unit$ and $L_{\bJ}  \unit \to L_{\bI} \unit$ are the unique maps making this diagram commute).
\end{enumerate}
\end{lemma}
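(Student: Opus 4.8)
The plan is to establish the six claims in order, using only the universal property from \Cref{tensor-idem} (the defining triangle $\itGamm_{\bI} A \to A \to L_{\bI} A$ with $\itGamm_{\bI}A \in \Loc(\bI)$, $L_{\bI}A \in \Loc(\bI)^\perp$) together with the standard orthogonality facts: $\Hom(\Loc(\bI), \Loc(\bI)^\perp) = 0$, and both classes are closed under the relevant operations. For (1): if $A \in \Loc(\bI)$ then $A$ itself fits into a triangle $A \xrightarrow{\id} A \to 0$ with the two outer terms in $\Loc(\bI)$ and $\Loc(\bI)^\perp$ respectively, so by uniqueness $\itGamm_{\bI}A \cong A$ and $L_{\bI}A \cong 0$; conversely if $L_{\bI}A \cong 0$ the triangle forces $\itGamm_{\bI}A \cong A$, and $\itGamm_{\bI}A \in \Loc(\bI)$ gives $A \in \Loc(\bI)$. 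For (2) and (3): since $\bK$ is rigidly-compactly-generated, $\Loc(\bI)$ is a localizing \emph{ideal} (it equals $\Loc^{\otimes}(\bI)$ because $\bI$ is already a thick ideal of $\bK^c$ and tensoring compacts against $\bI$ stays in $\bI$; then tensoring $\Loc(\bI)$ with an arbitrary object of $\bK$ stays in $\Loc(\bI)$ by a localizing-subcategory argument), and similarly $\Loc(\bI)^\perp$ is closed under $A \otimes -$ and $- \otimes A$ using that $-\otimes A$ has the biexact structure and the internal-hom adjunction coming from rigidity of $\bK^c$ (one reduces $\Hom(B, C \otimes A) = 0$ for $B \in \Loc(\bI)$ to an orthogonality statement by a thick-subcategory argument in the variable $A$, starting from compact $A$ where the dual exists). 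Then tensoring the defining triangle for $\unit$ by $A$ on either side yields a triangle $\itGamm_{\bI}\unit \otimes A \to A \to L_{\bI}\unit \otimes A$ with outer terms in $\Loc(\bI)$ and $\Loc(\bI)^\perp$, so uniqueness in \Cref{tensor-idem} identifies it with the defining triangle for $A$, giving all the isomorphisms in (2) and (3) simultaneously.

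For (4): by (2)--(3), $L_{\bI}\unit \otimes \itGamm_{\bI}\unit \cong \itGamm_{\bI}(L_{\bI}\unit)$, and $L_{\bI}\unit \in \Loc(\bI)^\perp$ means $\Hom(\Loc(\bI), L_{\bI}\unit) = 0$, so its $\itGamm_{\bI}$-part (which lies in $\Loc(\bI)$ and maps to it) must vanish — concretely, $\itGamm_{\bI}(L_{\bI}\unit) \to L_{\bI}\unit$ is zero by orthogonality, and the triangle $\itGamm_{\bI}(L_{\bI}\unit) \to L_{\bI}\unit \to L_{\bI}(L_{\bI}\unit)$ together with $L_{\bI}(L_{\bI}\unit) \cong L_{\bI}\unit$ (idempotency, itself a consequence of (3) applied to $A = L_{\bI}\unit$ and (1)) forces $\itGamm_{\bI}(L_{\bI}\unit) \cong 0$. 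For (5): since $\bJ \subseteq \bI$ we have $\Loc(\bJ) \subseteq \Loc(\bI)$, hence $\itGamm_{\bJ}\unit \in \Loc(\bJ) \subseteq \Loc(\bI)$, so by (1), $\itGamm_{\bI}(\itGamm_{\bJ}\unit) \cong \itGamm_{\bJ}\unit$; combining with (2) gives $\itGamm_{\bI}\unit \otimes \itGamm_{\bJ}\unit \cong \itGamm_{\bJ}\unit$. Dually, $L_{\bI}\unit \in \Loc(\bI)^\perp \subseteq \Loc(\bJ)^\perp$, so $L_{\bJ}(L_{\bI}\unit) \cong L_{\bI}\unit$ by (1), and (3) then gives $L_{\bI}\unit \otimes L_{\bJ}\unit \cong L_{\bI}\unit$.

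For (6): one first produces a morphism $\itGamm_{\bJ}\unit \to \itGamm_{\bI}\unit$. Since $\itGamm_{\bJ}\unit \in \Loc(\bJ) \subseteq \Loc(\bI)$, the composite $\itGamm_{\bJ}\unit \to \unit$ factors (uniquely, up to the orthogonality) through $\itGamm_{\bI}\unit \to \unit$: indeed applying $\Hom(\itGamm_{\bJ}\unit, -)$ to the triangle $\itGamm_{\bI}\unit \to \unit \to L_{\bI}\unit$ and using $\Hom(\itGamm_{\bJ}\unit, L_{\bI}\unit) = 0 = \Hom(\itGamm_{\bJ}\unit, \Sigma^{-1}L_{\bI}\unit)$ shows $\Hom(\itGamm_{\bJ}\unit, \itGamm_{\bI}\unit) \xrightarrow{\sim} \Hom(\itGamm_{\bJ}\unit, \unit)$, so there is a unique lift. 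This gives the left square; completing to a morphism of triangles produces the map $L_{\bJ}\unit \to L_{\bI}\unit$ on cones, and uniqueness of the whole morphism of triangles follows from the same $\Hom$-vanishing computations (the middle map being $\id_\unit$ pins down the other two by the long exact sequences). I expect step (6) — in particular the \emph{uniqueness} clause — to be the main obstacle, since it requires being careful that the lifting isomorphisms on $\Hom$-groups force rigidity of the entire commuting diagram rather than just existence; everything else reduces cleanly to the universal property of \Cref{tensor-idem} plus orthogonality.
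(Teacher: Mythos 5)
Your proof is correct and follows essentially the same route that the paper's (mostly citation-based) proof points to: the paper outsources (2)--(3) to \cite[Lemma~3.2.1]{NVY4} and (6) to \cite[Remark~2.9]{BF2011}, and what you write out are precisely the standard arguments behind those citations, namely that $\Loc(\bI)$ and $\Loc(\bI)^\perp$ are two-sided $\otimes$-ideals (via the usual localizing-subcategory-in-each-variable plus rigidity-of-compacts argument), that tensoring the defining triangle for $\unit$ then produces a triangle with correct orthogonality and hence coincides with the one for $A$, and that the morphism of triangles in (6) is pinned down by the vanishing of $\Hom(\itGamm_{\bJ}\unit, \Sigma^{k}L_{\bI}\unit)$. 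One small remark on (4): your detour through the dual of (1) (showing $\itGamm_{\bI}(L_{\bI}\unit)\cong 0$ because $L_{\bI}\unit\in\Loc(\bI)^\perp$, using idempotency of $L_{\bI}$) works, but the paper's intended route is shorter and uses only what is literally stated: $L_{\bI}\unit\otimes\itGamm_{\bI}\unit\cong L_{\bI}(\itGamm_{\bI}\unit)$ by (3), and $L_{\bI}(\itGamm_{\bI}\unit)\cong 0$ directly by (1) since $\itGamm_{\bI}\unit\in\Loc(\bI)$.
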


\begin{proof}
(1) is clear. (2) and (3) follow from \cite[Lemma 3.2.1]{NVY4}. (4) is a direct consequence of (1), (2), and the fact that $\itGamm_{\bI} \unit \in \Loc (\bI)$. (5) is straightforward. For (6), see \cite[Remark 2.9]{BF2011}.\end{proof}

Note in particular that by Lemma \ref{lemma:commutes_with_Gamma_L}(2) and (3), both $L_{\bI} \unit$ and $\itGamm_{\bI} \unit$ tensor-commute with all objects of $\bK$. This fact will be used frequently throughout the paper. 

\bnota{notation-gamma-closed-set}
Recall that given a support datum $(X,\sigma)$ and a specialization-closed subset $S$ of $X$, we have the thick ideal $\Theta(S)$ defined in \notaref{phi-theta}. To avoid burdening the notation, we set
\[
\itGamm_{S} := \itGamm_{\Theta(S)}, \; \; L_{S} := L_{\Theta(S)}.
\]
\enota

We now define extended support for an arbitrary support datum $(X,\sigma)$; this follows the definitions put forward by Benson--Iyengar--Krause \cite{BIK2008} and Balmer--Favi \cite{BF2011} from the commutative case.

Let $x$ be a point of $X$. We set 
\begin{align*}
\mathcal{Z}(x)&:=\{y \in X : x \not \in \overline{\{y\}} \},\\
\mathcal{V}(x) &:= \overline{\{x\}}.
\end{align*}
Note that $\mc{Z}(x)$ and $\mc{V}(x)$ are both specialization-closed, and, since we assume that $X$ is $T_0$, we have $\{x\} = \mc{V}(x) \backslash (\mc{V}(x) \cap \mc{Z}(x))$. We now define a functor $\itGamm_x$ via 
\[
\itGamm_x A := \itGamm_{\mathcal{V}(x)}\unit \otimes L_{\mathcal{Z}(x)}\unit\otimes A.
\]
Note that by Lemma \ref{lemma:commutes_with_Gamma_L},  $\itGamm_x$ is the composition of functors $\itGamm_{\mathcal{V}(x)}\circ L_{\mathcal{Z}(x)}$.
\bre{thomason-not-spec-closed}
The results in this paper will focus on support data based on Noetherian spaces $X$. For non-Noetherian spaces, one can define $\it{\Gamma}_x$ in a more general way using Thomason subsets, as discussed in \cite[\S2.1]{stevenson2014a}, to get around the issue that $\mc{V}(x)$ might not be Thomason.
Recall that in a Noetherian space, any specialization closed subset is automatically a Thomason subset.
\ere

\bre{confusing-notation}
We caution the reader that if $x$ is a point, then $\itGamm_x$ is not the same as $\itGamm_{\overline{\{x\}}}$, which is defined according to \notaref{notation-gamma-closed-set}.
\ere

\bnota{confusing-notation-two}
Unfortunately, the notation $\itGamm_{\bP}$, for $\bP \in \Spc \bK^c$, is now overloaded: (1) if we consider $\bP$ as a thick ideal, it has a meaning as in \Cref{tensor-idem}, and, on the other hand, (2) if we consider $\bP$ as a point of $\Spc \bK^c$ equipped with the Balmer support $\suppB$, then we have the functor
\[
\itGamm_{\bP}:=\itGamm_{\mc{V}(\bP)} \circ L_{\mc{Z}(\bP)} = \itGamm_{\mc{V}(\bP)}\unit \otimes L_{\mc{Z}(\bP)}\unit\otimes -
\]
as defined above. For the remainder of the paper, if we write $\itGamm_{\bP}$ for $\bP \in \Spc \bK^c$, we will always mean (2), that is, we consider $\bP$ as a point of $\Spc \bK^c$ rather than as a thick ideal. 
\enota

\bde{ext-support}
Let $(X,\sigma)$ be a support datum on $\bK^c$. Define the {\it{extension of $\sigma$}}, denoted $(X, \widetilde{\sigma})$, by 
\[
\widetilde{\sigma}(A):=\{x \in X : \itGamm_x A \not \cong 0\}
\]
for all $A \in \bK.$
\ede

It is straightforward to check the following:

\bpr{ext-supp-is-ext-supp}
Let $(X,\sigma)$ be a tensorial realizing support on a small monoidal triangulated category $\bK^c$. Then $(X, \widetilde{\sigma})$ is an extension of $(X,\sigma)$ (recall \deref{ext-arb-supp}). 
\epr

\begin{proof}
    Since both functors $\itGamm_{\bI}$ and $L_{\bI}$ are triangulated functors commuting with coproducts, conditions (2)-(4) of \deref{ext-arb-supp} are automatic. Condition (5) follows from the fact that $F(A\otimes B)\cong F(A)\otimes F(B)$ for both of the functors $F=\itGamm_{\bI},L_{\bI}$ (these identities follow directly from Lemma~\ref{lemma:commutes_with_Gamma_L}). It remains to show condition (1), i.e. that $\sigma(A)=\widetilde{\sigma}(A)$ for all compact objects $A$; the proof of this fact will proceed similarly to \cite[Proposition~7.17]{BF2011}, which proves the analogous result for $\sigma = \suppB$ in the commutative case. We need to show that for any compact $A$, we have $x \not \in \sigma(A)$ if and only if $\itGamm_x \unit \otimes A \cong 0$. 

    Suppose $x \not \in \sigma(A)$; this is equivalent to $\sigma(A) \subseteq \mc{Z}(x)$. That in turn is equivalent to $L_{\mc{Z}(x)} \unit \otimes A \cong 0$ by Lemma \ref{lemma:commutes_with_Gamma_L}(1), which then immediately implies $\itGamm_x \unit \otimes A \cong 0$. 

    On the other hand, assume $\itGamm_x \unit \otimes A \cong 0$. By the assumption that $\sigma$ is realizing, we can find an object $C \in \bK^c$ with $\sigma(C)=\mc{V}(x)$. Note that $C \otimes \itGamm_{\mc{V}(x)} \unit \cong C$. For any $B \in \bK^c$, we then find that
    \begin{align*}
        0 &\cong A \otimes \itGamm_x \unit \otimes B \otimes C\\
        &\cong A \otimes L_{\mc{Z}(x)}\unit\otimes B \otimes C\\
        & \Updownarrow \\
        A \otimes B \otimes C &\in \Loc(\Theta(\mc{Z}(x)))\\
        &\Updownarrow \text{ (by \cite[Lemma 2.2]{Neeman1992})}\\
        A \otimes B \otimes C &\in \Theta(\mc{Z}(x))\\
        &\Updownarrow\\
        \sigma(A \otimes B \otimes C) &\subseteq \mc{Z}(x).
    \end{align*}
Since we assume that $\sigma$ is tensorial, this implies that
\[
\bigcup_{B \in \bK^c} \sigma(A \otimes B \otimes C) = \sigma(A) \cap \sigma(C) \subseteq \mc{Z}(x).
\]
Now note that $x \not \in \mc{Z}(x)$, but $x \in \mc{V}(x)$, and recall that we have assumed $\sigma(C)=\mc{V}(x)$. It follows that $x \not \in \sigma(A)$. 
\end{proof}

\section{Tensoriality of extended Balmer support}
\label{sect:tensor-of-balmer}

In this section, we prove that when $\Spc \bK^c$ is Noetherian, the extension of the Balmer support is tensorial. This will follow closely the proof given in the commutative case by Balmer and Favi \cite[Section 7]{BF2011}.

We first set some notation.

\bnota{balmersupp-not}
    The extension of the Balmer support $\suppB$ as in \deref{ext-support} will be denoted $\SuppB$.
\end{notation}

To prove tensoriality, we first describe the extended Balmer supports of the Rickard idempotents.

\ble{rickard-idem-supp}
Let $\bK^c$ be a small $\Spc$-Noetherian monoidal triangulated category. Let $\bI$ be any thick ideal of $\bK^c$. Then $\SuppB(\itGamm_{\bI} \unit)  = \Phi_{\suppB}(\bI)$ and $\SuppB(L_{\bI} \unit) = \Spc \bK^c \backslash \Phi_{\suppB}(\bI).$
\ele

\begin{proof}
Let $A \in \bI$, and let $\bP \in \suppB(A).$ Then $A \otimes \itGamm_{\bP} \unit  \not \cong 0.$ Since $\itGamm_{\bI} \unit \otimes A \cong A \cong A \otimes \itGamm_{\bI} \unit$ by Lemma \ref{lemma:commutes_with_Gamma_L}(1) and (2), it follows that $\itGamm_{\bI} \unit\otimes \itGamm_{\bP} \unit \not \cong 0,$ that is, $\bP \in \SuppB(\itGamm_{\bI} \unit).$ Hence 
        \[
        \Phi_{\suppB}(\bI) \subseteq \SuppB(\itGamm_{\bI}(A)). 
        \]
        On the other hand, since $\itGamm_{\bI}(A)$ is in $\Loc(\bI)$, we have $\SuppB(\itGamm_{\bI}(A)) \subseteq \Phi_{\suppB}(\bI)$ by \leref{support-local}. 

        Suppose $\bP \in \SuppB(L_{\bI} \unit)$, i.e. $\itGamm_{\bP} \unit \otimes L_{\bI} \unit \not \cong 0.$ Recall that by Lemma \ref{lemma:commutes_with_Gamma_L}, if $\Theta(\mc{V}(\bP)) \subseteq \bI$, this would imply that $\itGamm_{\mc{V}(\bP)} \unit \otimes L_{\bI} \unit \cong 0$, which would immediately imply that $\itGamm_{\bP} \unit \otimes L_{\bI} \unit \cong 0$, a contradiction, so we have $\Theta(\mc{V}(\bP)) \not \subseteq \bI$. Using the Noetherianity of $\Spc \bK^c$, by \Cref{class-ideals} we have 
        \[
        \mc{V}(\bP) \not \subseteq \Phi_{\suppB}(\bI), 
        \]
        i.e. $\bP \not \in \Phi_{\suppB}(\bI)$. It follows that
        \[
        \SuppB(L_{\bI} \unit) \subseteq \Spc \bK^c \backslash \Phi_{\suppB}(\bI).
        \]        
        Since we have a distinguished triangle
        \[
        \itGamm_{\bI} \unit \to \unit \to L_{\bI} \unit,
        \]
        we have that $\SuppB(\itGamm_{\bI} \unit) \cup \SuppB(L_{\bI} \unit) = \Spc \bK^c,$ and so the opposite containment
        \[
        \SuppB(L_{\bI} \unit) \supseteq \Spc \bK^c \backslash \Phi_{\suppB}(\bI)
        \] 
        holds as well.
\end{proof}

We now prove that $\SuppB$ satisfies a tensor product property when tensoring by Rickard idempotents.

\ble{tensor-idem-intersect}
Let $\bK^c$ be a small $\Spc$-Noetherian monoidal triangulated category. Let $A$ be an object of $\bK$, and let $\bI$ be a thick ideal of $\bK^c$. Then
\begin{align*}
\SuppB(A \otimes \itGamm_{\bI}\unit ) &= \SuppB(A) \cap \SuppB(\itGamm_{\bI} \unit)= \SuppB(A) \cap \Phi_{\suppB}(\bI),\\
\SuppB(A \otimes L_{\bI}\unit ) &= \SuppB(A) \cap \SuppB(L_{\bI} \unit)=\SuppB(A) \cap (\Spc \bK^c \backslash \Phi_{\suppB}(\bI)).
\end{align*}
\ele

\begin{proof}
Since $\SuppB$ is an extended support, the containments 
\begin{align*}
\SuppB(A \otimes \itGamm_{\bI}\unit ) &\subseteq \SuppB(A) \cap \SuppB(\itGamm_{\bI} \unit),\\
\SuppB(A \otimes L_{\bI}\unit ) &\subseteq \SuppB(A) \cap \SuppB(L_{\bI} \unit)
\end{align*}
are automatic. For the opposite containment in the first claim, suppose $\bP \in \SuppB(A) \cap \SuppB(\itGamm_{\bI} \unit)$. In particular, by \leref{rickard-idem-supp}, $\bP \in \Phi_{\suppB}(\bI)$, in which case $\mc{V}(\bP) \subseteq \Phi_{\suppB}(\bI)$, and hence $\Theta(\mc{V}(\bP)) \subseteq \bI$. It follows from Lemma \ref{lemma:commutes_with_Gamma_L}(5) that $\itGamm_{\mc{V}(\bP)} \unit \otimes \itGamm_{\bI} \unit \cong \itGamm_{\mc{V}(\bP)} \unit$, and so
\begin{align*}
    A \otimes \itGamm_{\bI} \unit \otimes \itGamm_{\bP} \unit & \cong A \otimes \itGamm_{\bI} \unit \otimes \itGamm_{\mc{V}(\bP)} \unit \otimes L_{\mc{Z}(\bP)} \unit \\
    & \cong A \otimes \itGamm_{\mc{V}(\bP)} \unit \otimes L_{\mc{Z}(\bP)} \unit\\
    &\cong A \otimes \itGamm_{\bP} \unit\\
    & \not \cong 0,
\end{align*}
so $\bP \in \SuppB(A \otimes \itGamm_{\bI} \unit)$. The second claim is similar.
\end{proof}

We are now able to prove tensoriality of the Balmer spectrum by adapting the proof from the commutative case, see \cite[Theorem 7.22]{BF2011}.

\bpr{tensor-balmer}
Let $\bK^c$ be a small $\Spc$-Noetherian monoidal triangulated category. Then the extended Balmer support $\SuppB$ is tensorial. \epr

\begin{proof}

Let $A \in \bK$ and $C \in \bK^c$. Since $\SuppB$ is an extended support, it is automatic that
\[
\bigcup_{B \in \bK^c} \SuppB(A \otimes B \otimes C) \subseteq \SuppB(A) \cap \SuppB(C).
\]
We must show the opposite containment. Set $V:=\suppB(C)$. We claim that $C$ and $\itGamm_V \unit$ generate the same localizing tensor ideal. Indeed, $\itGamm_V \unit \otimes C \cong C$ by Lemma \ref{lemma:commutes_with_Gamma_L} since $C \in \Theta(V)$, so $C$ is in the localizing tensor ideal generated by $\itGamm_V \unit$; and on the other hand, $\itGamm_V \unit$ is by definition in $\Loc(\Theta(V))$, which is equal to $\Loc(\langle C \rangle)$ by \Cref{class-ideals}, so $\itGamm_V \unit$ is in the localizing tensor ideal generated by $C$. Since the kernel of $\itGamm_{\bP} \otimes -$ is a localizing tensor ideal for every $\bP \in \Spc \bK^c$, it follows that $\SuppB(C) = \SuppB(\itGamm_V \unit)$. Hence
\[
\SuppB(A) \cap \SuppB(C) = \SuppB(A) \cap \SuppB(\itGamm_V \unit) = \SuppB(A \otimes \itGamm_V \unit)
\]
by \leref{tensor-idem-intersect}. Suppose $\bP \in \SuppB(A \otimes \itGamm_V \unit)$, so that $\itGamm_{\bP}\unit \otimes A \otimes \itGamm_V \unit \not \cong 0$. Since $\itGamm_V \unit \in \Loc(\langle C \rangle)$, it is straightforward to check that there must be some $D \in \langle C \rangle$ with $\itGamm_{\bP} \unit \otimes A \otimes D \not \cong 0$. This implies there is some $B \in \bK^c$ with $\itGamm_{\bP} \unit \otimes A \otimes B \otimes C \not \cong 0$ by the following argument. Suppose $\itGamm_{\bP} \unit \otimes A \otimes B \otimes C \cong 0$ for all $B \in \bK^c$. Consider
\[
\bJ:=\{ E \in \bK^c: \;\; \itGamm_{\bP} \unit \otimes A \otimes B \otimes E \cong 0 \; \forall \;B \in \bK^c \}.
\]
It can be readily checked that $\bJ$ is a thick ideal, and since it contains $C$, it must contain all of $\langle C \rangle$, a contradiction. Hence there exists $B$ with $\itGamm_{\bP} \unit \otimes A \otimes B \otimes C \not \cong 0,$ and we have proven the reverse containment.

\end{proof}

\section{Faithfulness of extended comparative support}
\label{sect-extcomp}

Throughout this section, we will assume that 
\begin{equation}
\label{kc-condition}
    \bK^c \text{ is either }\Spc\text{-Noetherian or finitely-generated,}
\end{equation}
the reason being that either allows us to apply Theorem \ref{class-ideals}. We will assume that $(X,\sigma)$ is a tensorial support such that there exists a map $\rho: \Spc \bK^c \to X$ satisfying
\begin{enumerate}
    \item $\rho \circ \eta = \id_X$, and
    \item $(\eta \circ \rho) (\bP) \subseteq \bP$ for all $\bP \in \Spc \bK^c$, 
\end{enumerate}
where $\eta: X \to \Spc \bK^c$ is the universal map that exists by \Cref{Balmer-univ}. Recall that by the definition of the topology on $\Spc \bK^c$, (2) is equivalent to the condition that $\eta \rho(\bP) \in \overline{\{\bP\}}.$

\bde{comparative}    A support datum $(X,\sigma)$ is called {\it{comparative}} if it satisfies (1) and (2) above for some map $\rho$. The map $\rho$ will be called a {\it{comparison map}}.
\ede

The terminology is intended to reflect that $\rho$  is an analogue of the comparison map (also denoted $\rho$) produced by Balmer in \cite[Theorem 5.3]{Balmer2010}. In this section, we will show that the extension $\widetilde{\sigma}$ of a comparative support $\sigma$ to $\bK^c$ is faithful, as long as $X$ is a Zariski space and $\bK^c$ is finitely-generated (recall, this means that there exists an object $G$ in $\bK^c$ such that $\bK^c$ is the smallest thick subcategory of $\bK^c$ containing $G$) or $\Spc$-Noetherian. Recall here that a sober topological space is one in which every irreducible closed set has a unique generic point, and a Zariski space is one which is Noetherian and sober. The main applications of this theorem are twofold: (1) for $\sigma = \suppB$, which is manifestly comparative (both $\rho$ and $\eta$ are the identity maps), and (2) the central cohomological support for the stable category of a finite tensor category, a version of support introduced in \cite{NVY3}, which we explore in Section \ref{sect-centralcoh}. 

We begin by noting a few elementary properties of comparative support that will be useful.

\begin{lemma}
    \label{comparative-props}
    Let $(X,\sigma)$ be a comparative support on $\bK^c$, and let $X$ be sober. Then 
    \begin{enumerate}
        \item if $W$ is a closed subset of $\Spc \bK^c$, then $\rho(W)=\eta^{-1}(W)$;
        \item $\sigma(A)= \rho(\suppB(A))$ for any $A \in \bK^c$;
        \item $\sigma$ is faithful;
        \item if $V \subseteq X$ is a Thomason subset, then $\rho^{-1}(V)$ is Thomason in $\Spc \bK^c$;
        \item if $X$ is Noetherian and $\bK^c$ is finitely-generated, then $\sigma$ is realizing.
    \end{enumerate}
    \end{lemma}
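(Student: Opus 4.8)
I will prove the five items essentially in order; (2) and (3) are formal consequences of (1) and the universal property, while (4) and (5) rest on the classification theorem \Cref{class-ideals}, available thanks to \eqref{kc-condition}. For (1), the key point is that $\rho(Y) = \eta^{-1}(Y)$ for every closed $Y \subseteq \Spc\bK^c$: the inclusion $\eta^{-1}(Y) \subseteq \rho(Y)$ holds because $\rho\eta = \id_X$, and the reverse holds because for $\bP \in Y$ we have $\eta\rho(\bP) \in \overline{\{\bP\}} \subseteq Y$, i.e.\ $\rho(\bP) \in \eta^{-1}(Y)$; since $\eta$ is continuous, $\eta^{-1}(Y)$ is closed, so $\rho$ sends closed sets to closed sets. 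Then (2) is immediate, as $\suppB(A)$ is closed: $\rho(\suppB(A)) = \eta^{-1}(\suppB(A)) = \sigma(A)$, the last equality being the defining property of $\eta$ from \Cref{Balmer-univ}. For (3), if $\sigma(A) = \varnothing$ then $\rho(\suppB(A)) = \varnothing$ by (2), so $\suppB(A) = \varnothing$ since $\rho$ is a function, and hence $A \cong 0$ by faithfulness of the Balmer support (\exref{balmer-supp}, which uses rigidity of $\bK^c$).

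For (4) the plan is to show $\rho^{-1}(V) = \Phi_{\suppB}(\Theta_{\sigma}(V))$. A Thomason-closed $V$ is specialization-closed, so a routine check against the support-datum axioms (rotating distinguished triangles as needed) shows $\Theta_{\sigma}(V) = \{A \in \bK^c : \sigma(A) \subseteq V\}$ is a thick ideal, whence $\Phi_{\suppB}(\Theta_{\sigma}(V))$ is Thomason-closed by \Cref{class-ideals}; and $\Phi_{\suppB}(\Theta_{\sigma}(V)) \subseteq \rho^{-1}(V)$ because $\sigma(A) \subseteq V$ forces $\suppB(A) \subseteq \rho^{-1}(\sigma(A)) \subseteq \rho^{-1}(V)$ by (2). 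The reverse inclusion is the heart of the proof: given $\bP$ with $\rho(\bP) \in V$, I must produce a compact $A$ with $A \notin \bP$ and $\sigma(A) \subseteq V$. I would first establish the topological identity $\eta^{-1}(\overline{\{\bP\}}) = \overline{\{\rho(\bP)\}}$; here $\supseteq$ follows from $\eta\rho(\bP) \in \overline{\{\bP\}}$ and closedness of $\eta^{-1}(\overline{\{\bP\}})$, while $\subseteq$ says $\rho(\bP)$ is a generic point of the closed set $\eta^{-1}(\overline{\{\bP\}})$, which is where soberness of $X$, in combination with part (1), is used. Granting the identity, $\overline{\{\bP\}}$ corresponds via \Cref{class-ideals} to a thick ideal $\bI$ (in the $\Spc$-Noetherian case $\overline{\{\bP\}}$ is automatically Thomason-closed), and $\bP \in \Phi_{\suppB}(\bI)$ yields $A \in \bI$ with $A \notin \bP$ and $\suppB(A) \subseteq \overline{\{\bP\}}$; then $\sigma(A) = \eta^{-1}(\suppB(A)) \subseteq \eta^{-1}(\overline{\{\bP\}}) = \overline{\{\rho(\bP)\}} \subseteq V$, using that $V$ is closed.

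Finally, for (5): since $X$ is Noetherian, each closed $V \subseteq X$ is Thomason-closed, so $\rho^{-1}(V)$ is Thomason-closed in $\Spc\bK^c$ by (4); using \Cref{class-ideals} in the finitely-generated case (and, if a direct single-object realization is not at hand, decomposing $V$ into its finitely many irreducible closed components, each with a generic point by soberness) one realizes $\rho^{-1}(V) = \suppB(A)$ for a compact $A$, and then $\sigma(A) = \rho(\suppB(A)) = \rho(\rho^{-1}(V)) = V$ because $\rho$ is surjective (it has a section). The main obstacle is the reverse inclusion in (4), and specifically the identity $\eta^{-1}(\overline{\{\bP\}}) = \overline{\{\rho(\bP)\}}$: once it is in hand the rest is bookkeeping, but establishing it is exactly where the hypotheses on $X$ (soberness) and on the comparison map $\rho$ genuinely come into play, and it is the step that cannot be obtained by formal manipulation alone.
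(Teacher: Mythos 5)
Your proof is correct in its essentials but takes a genuinely different route from the paper's in two places, and one of those routes introduces a subtle dependence that the paper's argument avoids.

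For part (1), you prove the cleaner identity $\rho(Y) = \eta^{-1}(Y)$ for every closed $Y \subseteq \Spc\bK^c$, from which closedness of $\rho$ is immediate. The paper instead argues directly that the closure of $\rho(V)$ equals $\rho(V)$; your identity is sharper and in fact does more of the work for (2) and (4) than the paper's version does.

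For part (4), the divergence is more substantial. The paper gives a purely point-set argument: it reduces to showing that $\rho^{-1}(U)$ is quasicompact for $U$ quasicompact open, and proves this by pushing an open cover of $\rho^{-1}(U)$ forward along $\rho$ (using closedness from (1)) to extract a finite subcover. No appeal is made to \Cref{class-ideals} or to the structure of thick ideals. You instead prove the identity $\rho^{-1}(V) = \Phi_{\suppB}(\Theta_\sigma(V))$ and invoke the classification theorem. This is a natural and conceptually appealing route — and indeed the paper proves exactly the topological identity $\eta^{-1}(\overline{\{\bP\}}) = \overline{\{\rho(\bP)\}}$ you isolate, but later, in Proposition~\ref{prop:comparative=existence_uniqueness_generic_points}. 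However, your argument for the reverse inclusion requires $\overline{\{\bP\}}$ to be Thomason-closed in order to apply \Cref{class-ideals}. You note this is automatic in the $\Spc$-Noetherian case, but the section's standing hypothesis~(\ref{kc-condition}) also permits $\bK^c$ merely finitely-generated, and there a closed singleton-closure $\overline{\{\bP\}}$ need not be Thomason. The paper's topological proof sidesteps this entirely, which is presumably why they chose it. (A second, minor point: you attribute the identity $\eta^{-1}(\overline{\{\bP\}}) = \overline{\{\rho(\bP)\}}$ to soberness of $X$, but in fact it follows already from continuity of $\rho$ together with closedness from (1), since $\rho(\overline{\{\bP\}})$ is then a closed set equal to $\overline{\{\rho(\bP)\}}$ and $\eta^{-1}(\overline{\{\bP\}}) \ni x \Rightarrow x = \rho\eta(x) \in \rho(\overline{\{\bP\}})$; soberness is not the load-bearing hypothesis at that step.) Parts (2), (3), and (5) match the paper's arguments in substance.
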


\begin{proof}
(1): let $W$ be a closed subset of $\Spc \bK^c$. If $x \in \eta^{-1}(W)$, then $\eta(x) \in W$, and $\rho \eta(x) = x \in \rho(W).$ For the other direction, if $\bP \in W$, that is $\rho(\bP) \in \rho(W)$, then $\eta\rho(\bP) \subseteq \bP$ and hence $\eta \rho(\bP) \in W$, since $W$ is closed.

(2): this follows directly from (1), together with the fact that by the universal property of $\Spc \bK^c$, we have $\sigma(A)=\eta^{-1}(\suppB(A))$. 

(3): if $A \in \bK^c$ with $\sigma(A)=\varnothing,$ then (2) implies $\suppB(A)=\varnothing$, which by faithfulness of $\suppB$ implies that $A \cong 0$. 

(4): suppose $V$ is Thomason, that is, $V= \bigcup_{k \in K} V_k$ where $V_k$ is closed and $X \backslash V_k$ is quasicompact, for each $k$. Then $\rho^{-1}(V) = \bigcup_{k \in K} \rho^{-1}(V_k),$ so it suffices to show that each $\rho^{-1}(V_k)$ has quasicompact complement. Since $\Spc \bK^c \backslash \rho^{-1}(V_k) = \rho^{-1}(X \backslash V_k)$, it suffices to prove that the inverse image of a quasicompact open set is quasicompact.  

Suppose $U$ is a quasicompact open subset of $X$, and suppose $\rho^{-1}(U)$ has an open cover $\{U_i\}_{i \in I}$. We must show that it admits some finite subcover. We claim that the collection of open sets 
\[
\{ X \backslash (\rho(\Spc \bK^c \backslash U_i)) \}_{i \in I}
\]
forms an open cover of $U$ (each of these sets is open since $\rho$ is a closed map by (1)). Indeed, if $x \in U$, then $\eta(x) \in \rho^{-1}(U)$, so $\eta(x) \in U_i$ for some $i$; we claim that $x \in X \backslash (\rho(\Spc \bK^c \backslash U_i))$ for this particular $i$. Suppose to the contrary. Then $x \in \rho(\Spc \bK^c \backslash U_i)$, so there exists $\bP \not \in U_i$ with $x=\rho(\bP)$. But $\eta(x)= \eta \rho(\bP) \subseteq \bP$, so any open set containing $\eta(x)$ must contain $\bP$, and $\eta(x) \in U_i$, so $\bP \in U_i$, a contradiction. Our claim that $x \in X \backslash (\rho(\Spc \bK^c \backslash U_i)) $ follows. 

Now, since $U$ is quasicompact, there exists a finite subset $J \subseteq I$ such that
\[
\{ X \backslash (\rho(\Spc \bK^c \backslash U_j)) \}_{j \in J}
\]
covers $U$. We now claim that $\{U_j\}_{j \in J}$ covers $\rho^{-1}(U)$, at which point our proof will be complete. It is enough to check that each $\eta(x),$ for $x \in U$, is in some $U_j$ for $j \in J$, since again if $\bP \in \rho^{-1}(U)$ then $\bP$ is in any open subset containing $\eta \rho(\bP)$. Since $x \in U$, it is in some $X \backslash (\rho(\Spc \bK^c \backslash U_j))$ for some $j \in J$; we claim that $\eta(x) \in U_j$. Suppose to the contrary. Then $\eta(x) \in \Spc \bK^c \backslash U_j$, and so $x=\rho \eta(x) \in \rho(\Spc \bK^c \backslash U_j)$, a contradiction. Hence $\eta(x) \in U_j$, and the collection
\[
\{U_j\}_{j \in J}
\]
is indeed a finite cover of $\rho^{-1}(U)$, completing the proof.

(5): if $V \subseteq X$ is a closed subset, then it is Thomason, since $X$ is Noetherian. By (4), $\rho^{-1}(V)$ is Thomason in $\Spc \bK^c$, and so by \Cref{class-ideals} we have $\rho^{-1}(V) = \suppB(A)$ for some $A \in \bK^c$, using the assumption that $\bK^c$ is finitely-generated. But then $\sigma(A)=\rho(\supp^B(A)) = \rho(\rho^{-1}(V)) = V$, since $\rho$ is surjective. 
\end{proof}

We note the following property of comparative support data (this property will also appear in the following section).

\begin{proposition}\label{prop:comparative=existence_uniqueness_generic_points}
Let $(X, \sigma)$ be a comparative support on $\bK^c$. Then for each $\bP \in \Spc \bK^c,$ the closed set $\eta^{-1}(\mc{V}(\bP)) = \eta^{-1}(\overline{\{\bP\}})$ has a unique generic point.
\end{proposition}
\begin{proof}

If $\rho$ is the comparison map for $\sigma$, we claim that $\rho(\bP)$ is the unique generic point of $\eta^{-1}(\mc{V}(\bP))$, i.e. that $\mc{V}(\rho(\bP))=\eta^{-1}(\mc{V}(\bP))$. First, by the comparative hypothesis, $\eta(\rho(\bP))\in \mc{V}(\bP)$, so $\rho(\bP)\in \eta^{-1}(\mc{V}(\bP))$. But this is a closed set, so $\mc{V}(\rho(\bP))\subseteq \eta^{-1}(\mc{V}(\bP))$. It remains only to prove the reverse inclusion. Take any $x\in\eta^{-1}(\mc{V}(\bP))$, hence $\eta(x)\in\mc{V}(\bP)$. Since $\rho$ is closed by Lemma \ref{comparative-props}(1), 
\[ x=\rho\circ \eta(x)\in\rho(\mc{V}(\bP))=\mc{V}(\rho(\bP)),\]
and so $\eta^{-1}(\mc{V}(\bP))\subseteq \mc{V}(\rho(\bP)).$
\end{proof}
\bre{unique-compar}
In fact, this proof implies that if $\sigma$ is comparative, then the comparison map $\rho$ is unique, since it can be constructed from $\eta:X\hookrightarrow \Spc\bK^c$ by setting $\rho(\bP)$ as the unique generic point of $\eta^{-1}(\mc{V}(\bP))$. In fact, for any tensorial support $(X,\sigma)$ such that the universal map $\eta$ satisfies the condition that $\eta^{-1}(\mc{V}(\bP))$ has a unique generic point for every $\bP\in\Spc\bK^c$, we can always construct a (potentially non-continous, as the next example shows) map $\widetilde{\rho}:\Spc\bK^c\to X$ by sending $\bP$ to the unique generic point of $\eta^{-1}(\mc{V}(\bP))$. To see that this map can fail to be continuous, even for Noetherian spectral spaces, consider the Noetherian spectral space characterized by the following specialization poset (see Notation \ref{spec-poset} and Remark \ref{every-spec-poset}):
\[\Spc \bK^c=\begin{tikzcd}
	&& x \\
	\dotsm & {y_{-1}} & {y_0} & {y_1} & \dotsm \\
	&& z
	\arrow[from=1-3, to=2-1]
	\arrow[from=1-3, to=2-2]
	\arrow[from=1-3, to=2-3]
	\arrow[from=1-3, to=2-4]
	\arrow[from=1-3, to=2-5]
	\arrow[from=2-1, to=3-3]
	\arrow[from=2-2, to=3-3]
	\arrow[from=2-3, to=3-3]
	\arrow[from=2-4, to=3-3]
	\arrow[from=2-5, to=3-3]
\end{tikzcd}\]
In this topological space, we have a unique generic point $x$, a unique closed point $z$, and infinitely many points $y_i$, indexed by $\mathbb{Z}$, where the closure of $\{y_i\}$ is the set $\{y_i,z\}$ for each $i$. It is not hard to check that the topology on this space implies that the closed sets are the entire space, the empty set, $\{z\}$, and the union of $\{z\}$ union with finitely many of $y_i$'s. On the other hand, define another space $X$ given by two points: $\{x',z'\}$ with $x'$ the generic point and $z'$ the closed point. Now we define the map $\eta$ as follows:
\[\begin{tikzcd}
	X &&&& {\Spc \bK^c} \\
	{x'} &&&& x \\
	&& \dotsm & {y_{-1}} & {y_0} & {y_1} & \dotsm \\
	{z'} &&&& z
	\arrow["\eta",  dotted, from=2-1, to=2-5]
	\arrow[from=2-1, to=4-1]
	\arrow[from=2-5, to=3-3]
	\arrow[from=2-5, to=3-4]
	\arrow[from=2-5, to=3-5]
	\arrow[from=2-5, to=3-6]
	\arrow[from=2-5, to=3-7]
	\arrow[from=3-3, to=4-5]
	\arrow[from=3-4, to=4-5]
	\arrow[from=3-5, to=4-5]
	\arrow[from=3-6, to=4-5]
	\arrow[from=3-7, to=4-5]
	\arrow["\eta", dotted, from=4-1, to=4-5]
\end{tikzcd}\]
Hence $\eta$ sends $x'\mapsto x$ and $z'\mapsto z$. We can then define the support datum $(X,\sigma)$ by defining $\sigma(A)\coloneqq \eta^{-1}(\suppB(A))$ for each $A\in\bK^c$. It is not hard to check that this datum is tensorial, and $\eta^{-1}(\mc{V}(\bP))$ has a unique generic point for all $\bP \in \Spc \bK^c$; furthermore, both $X$ and $\Spc \bK^c$ are Noetherian spectral spaces. Attempting to define $\widetilde{\rho}$ would send $z\mapsto z'$, $x\mapsto x'$, and all $y_i\mapsto z'$. But this is not continuous, as $\{z'\}$ is certainly closed but its preimage $\{z,y_i\}_{i\in \mathbb{Z}}$ is not.
\ere
The next lemma is the primary tool which allows us to transport arguments using the bijection between Thomason subsets of $\Spc \bK^c$ and thick ideals of $\bK^c$ to arguments based on the support $(X,\sigma).$ 

\begin{lemma}
\label{theta-cup}
Let $\sigma$ be a comparative support with value in a Zariski space $X$ on a small monoidal triangulated category $\bK^c$ satisfying (\ref{kc-condition}). Let $V$ and $W$ be specialization-closed subsets of $X$. We have $\langle \Theta(V) \cup \Theta(W) \rangle = \Theta(V \cup W)$.    
\end{lemma}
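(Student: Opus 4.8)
The plan is to prove the two inclusions separately; the inclusion $\langle \Theta(V) \cup \Theta(W)\rangle \subseteq \Theta(V\cup W)$ is the easy direction and the reverse inclusion is the main obstacle. For the easy direction, observe that $\Theta(V\cup W)$ is a thick ideal (this is standard: if $\sigma(A),\sigma(B)\subseteq V\cup W$ then axioms (2) and (4) of a support datum give $\sigma(A\oplus B)$ and $\sigma$ of any cone inside $V\cup W$, and axiom (5) handles tensoring). Since it visibly contains both $\Theta(V)$ and $\Theta(W)$, it contains the smallest thick ideal $\langle\Theta(V)\cup\Theta(W)\rangle$ containing their union.

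For the reverse inclusion, the strategy is to transport the problem to $\Spc\bK^c$ via the comparison map $\rho$ and use the bijection of \Cref{class-ideals} between Thomason-closed subsets and thick ideals. Using \Cref{comparative-props}(2), for any $A\in\bK^c$ we have $\sigma(A)=\rho(\suppB(A))$, and since $\rho\circ\eta=\id_X$ this gives $\sigma(A)\subseteq V$ iff $\suppB(A)\subseteq \rho^{-1}(V)$ — indeed $\rho(\suppB(A))\subseteq V$ implies $\suppB(A)\subseteq \rho^{-1}(\rho(\suppB(A)))\subseteq\rho^{-1}(V)$, and conversely $\suppB(A)\subseteq\rho^{-1}(V)$ gives $\sigma(A)=\rho(\suppB(A))\subseteq\rho(\rho^{-1}(V))\subseteq V$. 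Hence $\Theta_\sigma(V)=\Theta_{\suppB}(\rho^{-1}(V))$, and similarly for $W$ and for $V\cup W$; note $\rho^{-1}(V\cup W)=\rho^{-1}(V)\cup\rho^{-1}(W)$. Since $X$ is Noetherian, $V$, $W$, and $V\cup W$ are Thomason-closed, and by \Cref{comparative-props}(4) so are $\rho^{-1}(V)$, $\rho^{-1}(W)$, $\rho^{-1}(V\cup W)$ in $\Spc\bK^c$. Now in $\Spc\bK^c$, where we have the classification theorem available by hypothesis (\ref{kc-condition}), the thick ideal $\langle\Theta_{\suppB}(\rho^{-1}(V))\cup\Theta_{\suppB}(\rho^{-1}(W))\rangle$ corresponds under $\Phi_{\suppB}$ to the Thomason-closed set $\Phi_{\suppB}(\Theta_{\suppB}(\rho^{-1}(V)))\cup\Phi_{\suppB}(\Theta_{\suppB}(\rho^{-1}(W)))=\rho^{-1}(V)\cup\rho^{-1}(W)=\rho^{-1}(V\cup W)$, which is exactly the Thomason-closed set corresponding to $\Theta_{\suppB}(\rho^{-1}(V\cup W))$; here I use that $\Phi_{\suppB}\circ\Theta_{\suppB}$ is the identity on Thomason-closed sets, which is part of \Cref{class-ideals}. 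By the injectivity of the bijection, $\langle\Theta_{\suppB}(\rho^{-1}(V))\cup\Theta_{\suppB}(\rho^{-1}(W))\rangle=\Theta_{\suppB}(\rho^{-1}(V\cup W))$. Translating back via $\Theta_\sigma(-)=\Theta_{\suppB}(\rho^{-1}(-))$ yields $\langle\Theta(V)\cup\Theta(W)\rangle=\Theta(V\cup W)$, which gives the reverse inclusion (and in fact equality directly).

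The main obstacle is bookkeeping the identity $\Theta_\sigma(V)=\Theta_{\suppB}(\rho^{-1}(V))$ cleanly and making sure the classification-theorem hypotheses are in force: we need $V$, $W$, $V\cup W$ to be Thomason (guaranteed by Noetherianity of the Zariski space $X$), their $\rho$-preimages to be Thomason (guaranteed by \Cref{comparative-props}(4)), and $\bK^c$ to satisfy (\ref{kc-condition}) so that \Cref{class-ideals} applies to $\Spc\bK^c$. One subtlety worth a remark: the classification theorem in $\Spc\bK^c$ is stated for Thomason-closed subsets, so every set appearing must genuinely be Thomason-closed, not merely specialization-closed — this is why the Noetherian hypothesis on $X$ (part of being a Zariski space) is essential and is used via \Cref{thomason-not-spec-closed}.
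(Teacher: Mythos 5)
Your proof is correct, and it takes a genuinely different (more structural) route than the paper's. The paper argues elementwise: given a single object $A$ with $\sigma(A) \subseteq V \cup W$, it forms the Thomason-closed sets $V' := \suppB(A) \cap \rho^{-1}(V)$ and $W' := \suppB(A) \cap \rho^{-1}(W)$, invokes \Cref{class-ideals} to produce thick ideals $\bI, \bJ$ with $\Phi_{\suppB}(\bI) = V'$, $\Phi_{\suppB}(\bJ) = W'$, observes $A \in \langle \bI, \bJ \rangle$, and then checks $\bI \subseteq \Theta(V)$, $\bJ \subseteq \Theta(W)$. Your proof instead establishes the global change-of-variable identity $\Theta_\sigma(-) = \Theta_{\suppB}(\rho^{-1}(-))$ once and for all (correctly: the argument needs only $\sigma(A) = \rho(\suppB(A))$, not $\rho \circ \eta = \id_X$), and then reduces the lemma to the corresponding purely Balmer-spectral statement, which falls out of the order-isomorphism $\Phi_{\suppB} \leftrightarrow \Theta_{\suppB}$. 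Your version is cleaner and more conceptual, and isolates exactly where the comparison map and the Thomason hypotheses enter. The one step you should flag explicitly rather than fold silently into ``the classification theorem'' is the equality $\Phi_{\suppB}(\langle \bI \cup \bJ \rangle) = \Phi_{\suppB}(\bI) \cup \Phi_{\suppB}(\bJ)$: this is not literally part of the statement of \Cref{class-ideals}, but follows from it, either by noting that $\Phi_{\suppB}$ is an order-isomorphism of lattices and hence preserves joins, or by the one-line argument that $\Theta_{\suppB}(\Phi_{\suppB}(\bI) \cup \Phi_{\suppB}(\bJ))$ is a thick ideal containing $\bI \cup \bJ$ and therefore $\langle \bI \cup \bJ \rangle$, so applying $\Phi_{\suppB}$ and using $\Phi_{\suppB} \Theta_{\suppB} = \id$ gives the nontrivial containment. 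With that remark inserted your proof is complete.
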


Recall that $\Theta(S)$ is the thick ideal $\{A \in \bK^c : \sigma(A) \subseteq S\}$ for any specialization-closed subset $S$ of $X$. 

\begin{proof}
    It is clear by definition that $\langle \Theta(V) \cup \Theta(W) \rangle \subseteq \Theta(V \cup W)$. To show the other direction, suppose $\sigma(A) \subseteq V \cup W$. Since $\sigma(A)= \rho(\suppB(A))$ by Lemma \ref{comparative-props}(2), $\suppB(A) \subseteq \rho^{-1}(V) \cup \rho^{-1}(W)$. Since $V$ and $W$ are Thomason (indeed, every specialization-closed subset of $X$ is Thomason, since $X$ is Noetherian), we know that $\rho^{-1}(V)$ and $\rho^{-1}(W)$ are Thomason, by Lemma \ref{comparative-props}(4). We also know that $\suppB(A)$ is Thomason, by \Cref{class-ideals}. It follows that $V':=\suppB(A) \cap \rho^{-1}(V)$ and $W' := \suppB(A) \cap \rho^{-1}(W)$ are Thomason, and by \Cref{class-ideals}, there are thick ideals $\bI$ and $\bJ$ with 
    \[
    \Phi_{\suppB}(\bI)=V', \; \; \; \Phi_{\suppB}(\bJ) = W'.
    \]
    Again by \Cref{class-ideals}, since $\suppB(A) \subseteq V' \cup W'$, we have $A \in \langle \bI, \bJ \rangle$. But now note that since $\suppB(B) \subseteq V' \subseteq \rho^{-1}(V)$ for all $B \in \bI$, and $\suppB(C) \subseteq W' \subseteq \rho^{-1}(W)$ for all $C \in \bJ$, we have that $\bI \subseteq \Theta(V)$ and $\bJ \subseteq \Theta(W)$. That is, we have
    \[
    A \in \langle \bI, \bJ \rangle \subseteq \langle \Theta(V), \Theta(W) \rangle,
    \]
    and the proof is complete.

\end{proof}

We now leverage Lemma \ref{theta-cup} to prove several useful identities for Rickard idempotents associated to specialization-closed subsets of $X$.

\begin{corollary}
\label{gamma-union}
Let $\sigma$ be a comparative support with value in a Zariski space $X$ on a small monoidal triangulated category $\bK^c$ satisfying (\ref{kc-condition}). Let $V$ and $W$ be specialization-closed subsets of $X$. Then $L_{V \cup W} \unit \cong L_V L_W \unit.$
\end{corollary}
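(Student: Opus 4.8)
The plan is to identify $L_V\unit\otimes L_W\unit$ with $L_{V\cup W}\unit$ by exhibiting it as the right-hand term of the Bousfield/Rickard localization triangle attached to the thick ideal $\Theta(V\cup W)$, and then invoking the uniqueness built into \Cref{tensor-idem}. Since $L_VL_W\unit=L_{\Theta(V)}(L_W\unit)\cong L_V\unit\otimes L_W\unit$ by \Cref{lemma:commutes_with_Gamma_L}(3), it suffices to prove $L_V\unit\otimes L_W\unit\cong L_{V\cup W}\unit$.

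The first preliminary I would record is that for any thick ideal $\bJ$ of $\bK^c$ the localizing subcategory $\Loc(\bJ)$ is a tensor ideal of $\bK$: by \Cref{lemma:commutes_with_Gamma_L}(1),(3) an object $A$ lies in $\Loc(\bJ)$ iff $L_{\bJ}\unit\otimes A\cong0$, and then $L_{\bJ}\unit\otimes(A\otimes Z)\cong(L_{\bJ}\unit\otimes A)\otimes Z\cong 0$ for every $Z\in\bK$. Consequently $\Loc(\Theta(V))$ and $\Loc(\Theta(W))$ are localizing tensor ideals, hence so is the localizing subcategory they jointly generate, $\Loc(\Theta(V)\cup\Theta(W))$; it therefore contains the thick ideal $\langle\Theta(V)\cup\Theta(W)\rangle$, which by \Cref{theta-cup} equals $\Theta(V\cup W)$. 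The reverse inclusion being clear, we get $\Loc(\Theta(V)\cup\Theta(W))=\Loc(\Theta(V\cup W))$. This is the only point at which the hypothesis that $\sigma$ be comparative enters.

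Next I would verify the two defining properties of the localization triangle for $\Theta(V\cup W)$ with right-hand term $L_V\unit\otimes L_W\unit$. For the orthogonality: by \Cref{lemma:commutes_with_Gamma_L}(3), $L_V\unit\otimes L_W\unit$ is isomorphic to both $L_{\Theta(V)}(L_W\unit)$ and $L_{\Theta(W)}(L_V\unit)$, which lie in $\Loc(\Theta(V))^\perp$ and $\Loc(\Theta(W))^\perp$ respectively by \Cref{tensor-idem}; since the right orthogonal of a family of objects depends only on the localizing subcategory it generates, $L_V\unit\otimes L_W\unit\in\Loc(\Theta(V))^\perp\cap\Loc(\Theta(W))^\perp=\Loc(\Theta(V)\cup\Theta(W))^\perp=\Loc(\Theta(V\cup W))^\perp$. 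For the fiber: I would complete the composite $\unit\to L_V\unit\to L_V\unit\otimes L_W\unit$ (second map $=L_V\unit\otimes(\unit\to L_W\unit)$) to a triangle $T\to\unit\to L_V\unit\otimes L_W\unit\to\Sigma T$, and apply the octahedral axiom to obtain a triangle
\[
\itGamm_V\unit\to T\to L_V\unit\otimes\itGamm_W\unit\to\Sigma\itGamm_V\unit .
\]
Here $\itGamm_V\unit\in\Loc(\Theta(V))\subseteq\Loc(\Theta(V\cup W))$, and $\itGamm_W\unit\in\Loc(\Theta(W))$ together with the fact that $\Loc(\Theta(W))$ is a tensor ideal forces $L_V\unit\otimes\itGamm_W\unit\in\Loc(\Theta(W))\subseteq\Loc(\Theta(V\cup W))$; hence $T\in\Loc(\Theta(V\cup W))$. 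Thus $T\to\unit\to L_V\unit\otimes L_W\unit\to\Sigma T$ is a distinguished triangle with first term in $\Loc(\Theta(V\cup W))$ and last term in $\Loc(\Theta(V\cup W))^\perp$, so by uniqueness of such triangles it is isomorphic to $\itGamm_{V\cup W}\unit\to\unit\to L_{V\cup W}\unit\to\Sigma\itGamm_{V\cup W}\unit$, giving $L_V\unit\otimes L_W\unit\cong L_{V\cup W}\unit$ and therefore $L_VL_W\unit\cong L_{V\cup W}\unit$.

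The step I expect to require the most care is the identification of the fiber $T$: one must run the octahedral axiom correctly to see $T$ as an extension of $L_V\unit\otimes\itGamm_W\unit$ by $\itGamm_V\unit$, and then combine the tensor-ideal structure of $\Loc(\Theta(W))$ with \Cref{theta-cup} to place both pieces inside $\Loc(\Theta(V\cup W))$. Everything else is formal manipulation with \Cref{lemma:commutes_with_Gamma_L} and the Rickard idempotent formalism of \Cref{tensor-idem}.
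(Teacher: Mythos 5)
Your proof is correct, and it takes a route that is related to but genuinely distinct from the paper's. The paper's argument is shorter: it applies $L_V L_W$ to the already-built localization triangle $\itGamm_{V\cup W}\unit\to\unit\to L_{V\cup W}\unit$, producing $L_V L_W\itGamm_{V\cup W}\unit\to L_V L_W\unit\to L_{V\cup W}\unit$, and then kills the first term by observing that $L_V L_W\itGamm_{V\cup W}\unit$ lies both in $\Loc(\Theta(V\cup W))$ (since $\Loc$ of a thick ideal is a tensor ideal and $\itGamm_{V\cup W}\unit\in\Loc(\Theta(V\cup W))$) and in $\langle\Theta(V)\cup\Theta(W)\rangle^\perp$, the latter being quoted from \cite[Lemma 3.2.2(a)]{NVY4}; \Cref{theta-cup} then makes these two categories orthogonal, so the object is zero. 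You instead build the localization triangle for $\Theta(V\cup W)$ from scratch: you verify that $L_V\unit\otimes L_W\unit$ lands in the right orthogonal (proving directly what the paper imports from [NVY4]), and you run the octahedral axiom to show the fiber $T$ of $\unit\to L_V\unit\otimes L_W\unit$ is an extension of $L_V\unit\otimes\itGamm_W\unit$ by $\itGamm_V\unit$, both of which sit in $\Loc(\Theta(V\cup W))$; uniqueness in \Cref{tensor-idem} then finishes the argument. Both proofs hinge on \Cref{theta-cup} at the same place (this is where the comparative hypothesis is used), and both exploit that the relevant $\Loc$'s are tensor ideals. The trade-off is that your proof is more self-contained (no external citation, and it explicitly records why $\Loc(\bJ)$ is a tensor ideal), while the paper's is faster because it reuses the known triangle rather than reconstructing it via the octahedron.
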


\begin{proof}
Applying $L_V L_W$ to the distinguished triangle
    \[
    \itGamm_{V \cup W} \unit  \to \unit \to L_{V \cup W} \unit, 
    \]
    we get
    \[
    L_V L_W \itGamm_{V \cup W} \unit  \to L_V L_W\unit \to L_{V \cup W} \unit, 
    \]
    by Lemma \ref{lemma:commutes_with_Gamma_L}. But
 $L_V L_W A \in \Loc( \Theta(V) \cup \Theta(W) )^{\perp}$ for any object $A$, by \cite[Lemma 3.2.2(a)]{NVY4}. Now note that $L_V L_W \itGamm_{V \cup W} \unit \in \Loc(\Theta(V \cup W)) = \Loc(\langle \Theta(V) \cup \Theta(W) \rangle)$ by Lemma \ref{theta-cup}, and it follows that $L_V L_W \itGamm_{V \cup W} \unit \cong 0$, hence $L_V L_W \unit \cong L_{V \cup W} \unit$. 
\end{proof}

\begin{lemma}\label{cor:L_Gamma_z}
Let $\sigma$ be a comparative support with value in a Zariski space $X$ on a small monoidal triangulated category $\bK^c$ satisfying (\ref{kc-condition}). Suppose $Y,Z$ are specialization-closed subsets of $X$. Then
$L_Z \unit \otimes \itGamm_Y \unit $ depends only on $Y\cap Z^c$.
\end{lemma}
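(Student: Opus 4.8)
The plan is to reduce the statement to the case where $Y$ and $Z$ are replaced by $Y' := Y \cap Z^c$ (taking closures as needed so that everything stays specialization-closed, which is harmless on a Noetherian space). First I would observe that it suffices to prove $L_Z \unit \otimes \itGamm_Y \unit \cong L_Z \unit \otimes \itGamm_{Y \cap Z^c} \unit$, since from this the dependence on $Y \cap Z^c$ alone is immediate: if $Y_1 \cap Z^c = Y_2 \cap Z^c$ then both sides equal $L_Z \unit \otimes \itGamm_{Y_1 \cap Z^c}\unit$. To prove this, I would decompose $Y$ as a union $Y = (Y \cap Z) \cup (Y \cap Z^c)$ of two specialization-closed sets and analyze $\itGamm_Y \unit$ accordingly.

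The key step is to use \leref{theta-cup} and the idempotent identities to compare $\itGamm_Y \unit$ with $\itGamm_{Y\cap Z^c}\unit$ after smashing with $L_Z \unit$. Write $A := Y \cap Z$ and $B := Y \cap Z^c$, so $Y = A \cup B$. Applying $L_Z \unit \otimes -$ to the triangle $\itGamm_{A \cup B}\unit \to \unit \to L_{A \cup B}\unit$ and using $L_{A\cup B}\unit \cong L_A L_B \unit$ (Corollary \ref{gamma-union}), I would try to show $L_Z \unit \otimes \itGamm_A \unit \cong 0$: indeed $A = Y \cap Z \subseteq Z$, so $\Theta(\mc V(p)) \subseteq \Theta(Z)$ for the relevant ideals, and $\itGamm_A \unit \in \Loc(\Theta(A)) \subseteq \Loc(\Theta(Z))$, while $L_Z \unit \in \Loc(\Theta(Z))^\perp$; hence $L_Z\unit \otimes \itGamm_A\unit \in \Loc(\Theta(Z)) \cap \Loc(\Theta(Z))^\perp = 0$ by \Cref{lemma:commutes_with_Gamma_L}(1) (using that $\itGamm$ and $L$ tensor-commute with everything, so the tensor product lands in $\Loc(\Theta(Z))$ on one hand and is killed by the quotient on the other). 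Then from the octahedral/composition structure relating $\itGamm_{A \cup B}$, $\itGamm_A$, and $\itGamm_B$ — concretely, $\itGamm_A \unit \otimes \itGamm_{A\cup B}\unit \cong \itGamm_A\unit$ by \Cref{lemma:commutes_with_Gamma_L}(5) since $A \subseteq A \cup B$ — I would extract a triangle exhibiting $\itGamm_{A\cup B}\unit$ as built from $\itGamm_A\unit$ and $\itGamm_B\unit$, tensor with $L_Z\unit$, and conclude $L_Z\unit \otimes \itGamm_{A\cup B}\unit \cong L_Z\unit \otimes \itGamm_B\unit = L_Z\unit \otimes \itGamm_{Y\cap Z^c}\unit$.

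The cleanest route for the last point is probably: by \Cref{lemma:commutes_with_Gamma_L}(6) there is a morphism of triangles relating $\itGamm_{B}\unit \to \unit \to L_B\unit$ to $\itGamm_{A\cup B}\unit \to \unit \to L_{A\cup B}\unit$ (since $B \subseteq A \cup B$), whose third column gives a map $L_B\unit \to L_{A\cup B}\unit$ fitting into a triangle whose remaining term is $\Sigma\itGamm_A\unit$-like (more precisely, the cofiber is $\itGamm_{A\cup B}/\itGamm_B$ which, via $L_{A\cup B}\unit\cong L_A L_B\unit$, one identifies with $L_B\itGamm_A\unit$ up to shift). Tensoring this triangle with $L_Z\unit$ and using $L_Z\unit\otimes\itGamm_A\unit\cong 0$ (hence $L_Z\unit\otimes L_B\itGamm_A\unit\cong 0$, since $L_B$ tensor-commutes) forces $L_Z\unit\otimes\itGamm_B\unit\cong L_Z\unit\otimes\itGamm_{A\cup B}\unit=L_Z\unit\otimes\itGamm_Y\unit$, which is what we want. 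I expect the main obstacle to be bookkeeping: pinning down precisely which triangle relates $\itGamm_A\unit$, $\itGamm_B\unit$, and $\itGamm_{A\cup B}\unit$ (this is where \leref{theta-cup} and Corollary \ref{gamma-union} do the real work, since without them one cannot identify $\Theta(A\cup B)$ with $\langle\Theta(A)\cup\Theta(B)\rangle$), and making sure all the tensor-commutation invocations of \Cref{lemma:commutes_with_Gamma_L}(2),(3) are legitimate. Everything else is formal manipulation of Verdier localizations.
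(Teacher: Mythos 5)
Your plan has a structural flaw right at the start: the set $Y\cap Z^c$ is \emph{not} specialization-closed in general, so $\itGamm_{Y\cap Z^c}\unit$ is not defined, and the reduction to proving $L_Z\unit\otimes\itGamm_Y\unit\cong L_Z\unit\otimes\itGamm_{Y\cap Z^c}\unit$ does not make sense. (If $y\in Y\cap Z^c$ and $y'\in\overline{\{y\}}$, then $y'\in Y$ but $y'$ may well lie in $Z$.) This is not a technicality: it is precisely the reason the lemma is stated as ``depends only on $Y\cap Z^c$'' rather than as an identification with a single Rickard idempotent, and it is precisely the situation it gets applied in, where $\mathcal V(x)\cap\mathcal Z(x)^c=\{x\}$ is a singleton which is usually not specialization-closed. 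Your remark that ``taking closures is harmless'' is also false: take $X=\{x,y\}$ with $y\in\overline{\{x\}}$, $Y=X$, $Z=\{y\}$. Then $Y\cap Z^c=\{x\}$, $L_Z\itGamm_Y\unit=L_{\{y\}}\unit$, while the specialization closure of $\{x\}$ is $X$ and $\itGamm_X\unit=\unit$; and at the other extreme, $\Theta(\{x\})=\{0\}$ (since $\sigma(A)$ is always closed, hence never contained in $\{x\}$ unless it is empty), so the thick-ideal interpretation gives $\itGamm_{\{0\}}\unit=0$. Neither agrees with $L_{\{y\}}\unit$. Relatedly, your claimed decomposition of $Y=(Y\cap Z)\cup(Y\cap Z^c)$ ``as a union of two specialization-closed sets'' is wrong for the second piece, so the triangle you want relating $\itGamm_A$, $\itGamm_B$, $\itGamm_{A\cup B}$ (with $B=Y\cap Z^c$) cannot be formed.

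There is a second, independent gap: your reduction only treats a fixed $Z$. The lemma asserts that if $Y_1\cap Z_1^c=Y_2\cap Z_2^c$ then $L_{Z_1}\itGamm_{Y_1}\unit\cong L_{Z_2}\itGamm_{Y_2}\unit$, with both $Y$ and $Z$ allowed to vary, and the application in the paper genuinely requires changing $Z$. The paper handles this by first reducing (via the chain $L_Z\itGamm_Y\cong L_Z\itGamm_{Y\cap Y'}\cong L_{Z\cup Z'}\itGamm_{Y\cap Y'}\cong L_{Z'}\itGamm_{Y'}$) to the two cases ``change $Y$ only, with $Y\subseteq Y'$ and $Y'\setminus Y\subseteq Z$'' and ``change $Z$ only,'' which avoids ever forming an idempotent from $Y\cap Z^c$. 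Each of these is then proved by tensoring the morphism of triangles from \Cref{lemma:commutes_with_Gamma_L}(6) with $L_Z\unit$ and noting, via $Y\cup Z=Y'\cup Z$ and Corollary~\ref{gamma-union}, that the second and third vertical maps become isomorphisms. Your underlying intuition --- that the relevant obstruction lives over $Y\cap Z$ and is killed by $L_Z$, i.e.\ $L_Z\unit\otimes\itGamm_A\unit\cong 0$ for $A\subseteq Z$ --- is correct and close in spirit to the paper's argument; what's missing is the reformulation that keeps all the sets specialization-closed and handles the variation in $Z$.
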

\begin{proof}
The proof is identical to \cite[Lemma~7.4]{BF2011}; the main technical difficulty in adapting their proof, which is for the Balmer support, to an arbitrary comparative support is Lemma \ref{theta-cup} above. We include the argument for the convenience of the reader. 
It suffices to show that $L_Z \itGamm_Y \unit \cong L_Z\itGamm_{Y'} \unit$ for $Y\cap Z^c=Y'\cap Z^c$ and $L_Z\itGamm_Y \unit \cong L_{Z'}\itGamm_Y \unit$ for $Y\cap Z^c=Y\cap {Z'}^c$, since in this case if $Y \cap Z^c = Y' \cap Z'^c$ we can check
\[
L_Z \itGamm_Y \unit \cong L_Z \itGamm_{Y \cap Y'} \unit \cong L_{Z \cup Z'} \itGamm_{Y \cap Y'} \unit \cong L_{Z'} \itGamm_{Y'} \unit.
\]
The two statements are proved the same way, so we will just show the first. In fact, it is enough to show that $L_Z \itGamm_Y \unit \cong L_Z \itGamm_{Y'} \unit$ for $Y \subseteq Y'$. In this case, we have a map of exact triangles
\[\begin{tikzcd}
	{\itGamm_Y \unit}  & \unit & {L_Y \unit} & {{}} \\
	{\itGamm_{Y’} \unit} & \unit & {L_{Y’}\unit} & {{}}
	\arrow[from=1-1, to=1-2]
	\arrow[from=1-1, to=2-1]
	\arrow[from=1-2, to=1-3]
	\arrow[Rightarrow, no head, from=1-2, to=2-2]
	\arrow[from=1-3, to=1-4]
	\arrow[from=1-3, to=2-3]
	\arrow[from=2-1, to=2-2]
	\arrow[from=2-2, to=2-3]
	\arrow[from=2-3, to=2-4]
\end{tikzcd}\]
by Lemma \ref{lemma:commutes_with_Gamma_L}(6). Note that $Y\cup Z=(Y\cap Z^c)\cup Z=(Y'\cap {Z}^c)\cup Z=Y'\cup Z$, so
Corollary \ref{gamma-union} implies that we have an isomorphism
\[ L_Y \unit \otimes L_Z \unit \cong  L_{Y\cup Z} \unit \cong  L_{Y'\cup Z} \unit \cong L_{Y'} \unit \otimes L_Z \unit.\]
This means that when we apply the exact functor $-\otimes L_Z \unit $ to the triangles above, the second and third vertical maps become isomorphisms, hence $\itGamm_Y \unit \otimes L_Z \unit \cong \itGamm_{Y'}\unit \otimes L_Z \unit$.
\end{proof}

We next prove some preliminary results regarding the localizing tensor ideals generated by Rickard idempotents. Recall that the localizing tensor ideal generated by a collection of objects $\bS$ is denoted $\Loc^{\otimes}(\bS)$. 

\ble{gamma-union}
Let $\sigma$ be a comparative support with value in a Zariski space $X$ on a small monoidal triangulated category $\bK^c$ satisfying (\ref{kc-condition}). Let $V=\bigcup_{i \in I} V_i$. We have $\Loc^{\otimes} ( \itGamm_{V_i} \unit : i \in I) = \Loc^{\otimes} (\itGamm_V \unit)$.
\ele

\begin{proof}
On one hand, since $\itGamm_{V_i} \unit \otimes \itGamm_V \unit \cong \itGamm_{V_i} \unit$, it is clear that 
\[
\Loc^{\otimes} ( \itGamm_{V_i} \unit : i \in I) \subseteq \Loc^{\otimes} (\itGamm_V \unit).
\]

On the other hand, since $\itGamm_V\unit$ is by definition in the localizing subcategory generated by $\Theta(V)$, it is enough to show that $\Theta(V)$ is contained in $\Loc^{\otimes}(\itGamm_{V_i} \unit : i \in I).$ Suppose that $A \in \Theta(V)$. Since $\sigma(A)$ is closed and $X$ is a Zariski space, we can write $\sigma(A)$ as a finite union of irreducible closed sets, each of which has a unique generic point. In particular, there is some finite collection $V_{i_1},..., V_{i_n}$ such that 
\[
\sigma(A) \subseteq V_{i_1} \cup... \cup V_{i_n}.
\]
Hence $A \in \Theta(V_{i_1} \cup... \cup V_{i_n}) = \langle \Theta(V_{i_1}),..., \Theta(V_{i_n}) \rangle$, using Lemma \ref{theta-cup}. But now note that
\[
\Loc(\langle \Theta(V_{i_1}),..., \Theta(V_{i_n}) \rangle) \subseteq \Loc^{\otimes} (\itGamm_{V_{i_1}} \unit ,..., \itGamm_{V_{i_n}} \unit),
\]
since if $B \in \Theta(V_{i_j})$ then $B \otimes \itGamm_{V_{i_j}} \unit \cong B$. The lemma now follows. 
\end{proof}

\begin{proposition}\label{lemma:4.17}
Let $\sigma$ be a comparative support with value in a Zariski space $X$ on a small monoidal triangulated category $\bK^c$ satisfying (\ref{kc-condition}). Let $Y\subseteq X$ be some subset. Suppose $A\in\bK$ satisfies $\itGamm_x A=0$ for all $x\not\in Y$. Then $A\in \Loc^{\otimes}(\itGamm_y \unit : y\in Y)$. 
\end{proposition}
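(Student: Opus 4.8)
The strategy is to reduce to the case where $A$ is a Rickard idempotent and then apply Lemma \ref{theta-cup} and Lemma \ref{gamma-union}. First I would consider the localizing tensor ideal $\bL := \Loc^{\otimes}(\itGamm_y \unit \mid y \in Y)$ and show it contains a convenient ``idempotent'' object associated to $Y$. The key intermediate object is $L_{Y^c}\unit$ where $Y^c := X \setminus Y$ is viewed via its specialization closure: more precisely, since we only have control over specialization-closed sets, I would first replace $Y$ with its specialization interior, or rather work directly with the identity $\itGamm_x A \cong \itGamm_x(\itGamm_x \unit \otimes A)$ and the ambient structure. Concretely, one observes from Lemma \ref{lemma:commutes_with_Gamma_L} and the definition $\itGamm_x = \itGamm_{\mc V(x)} \otimes L_{\mc Z(x)}$ that $\itGamm_x(\itGamm_y \unit) = 0$ whenever $x \neq y$ — this follows since $\sigma$ is comparative, $X$ is $T_0$, and the supports $\mc V(x), \mc Z(x)$ separate $x$ from $y$ — and $\itGamm_x(\itGamm_x \unit) \cong \itGamm_x\unit \neq 0$ when $x \in \widetilde\sigma(\unit) = X$.

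The main argument then runs as follows. Let $\bL := \Loc^{\otimes}(\itGamm_y \unit \mid y \in Y)$, and let $L_{\bL}\colon \bK \to \bK$ be the localization functor killing $\bL$ (this exists by the same Neeman/Rickard adjoint argument as in \Cref{tensor-idem}, applied to the localizing tensor ideal $\bL$; alternatively one can use Lemma \ref{gamma-union} to write $\bL$ as generated by a single idempotent $\itGamm_V\unit$ when $Y$ happens to be specialization-closed, and handle the general case by a limit). It suffices to show $L_{\bL}A \cong 0$. Since $A \to L_{\bL}A$ becomes an isomorphism after applying $L_{\bL}$, and $L_{\bL}$ is a monoidal triangulated coproduct-preserving functor, it is enough to check that $\itGamm_x(L_{\bL}A) = 0$ for all $x \in X$: indeed $\widetilde\sigma(L_{\bL}A) = \varnothing$ will force $L_{\bL}A \cong 0$ once we know faithfulness of $\widetilde\sigma$ on the relevant class — but since that is precisely what we are building toward, instead I would argue directly that $L_{\bL}A$ lies in a localizing subcategory on which every $\itGamm_x$ vanishes, hence is zero by the repeated-homotopy-colimit description. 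For $x \notin Y$: $\itGamm_x(L_{\bL}A)$ is a subquotient (via the triangle $\itGamm_{\bL}A \to A \to L_{\bL}A$) of $\itGamm_x A$ and $\itGamm_x(\itGamm_{\bL}A)$, the first being $0$ by hypothesis and the second being $0$ because $\itGamm_{\bL}A \in \bL$ and $\itGamm_x(\itGamm_y\unit) = 0$ for all $y \in Y$ (here one uses that $\bL$ is generated by the $\itGamm_y\unit$ and that $\itGamm_x \otimes -$ preserves the relevant closure operations). For $x \in Y$: $\itGamm_x \unit \in \bL$, so $\itGamm_x(L_{\bL}A) \cong \itGamm_x \unit \otimes L_{\bL}A \cong 0$ since $L_{\bL}$ annihilates everything in $\bL$ and in particular $\itGamm_x\unit \otimes L_{\bL}A \in \bL$ while also lying in $\bL^\perp$. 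Hence $\itGamm_x(L_{\bL}A) = 0$ for every $x$, and a standard argument (every object all of whose $\itGamm_x$ vanish, over a Zariski space, and built from $\unit$ via localizations indexed by specialization-closed sets) forces $L_{\bL}A \cong 0$, so $A \cong \itGamm_{\bL}A \in \bL$.

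The step I expect to be the main obstacle is the very last one: concluding $L_{\bL}A \cong 0$ from $\itGamm_x(L_{\bL}A) = 0$ for all $x$. In the commutative Balmer--Favi setting this is handled by a transfinite dévissage along the Zariski space $X$ (filtering $\unit$ by closed sets and their complements), but here $\sigma$ need not be the Balmer support and $\eta$ need not be a homeomorphism, so one must be careful that the localization functors $L_V$ for specialization-closed $V \subseteq X$ behave well — this is exactly where Lemma \ref{theta-cup}, Corollary \ref{gamma-union}, Lemma \ref{cor:L_Gamma_z}, and Lemma \ref{gamma-union} are invoked, to guarantee that the $L_V\unit$ and $\itGamm_V\unit$ satisfy the same formal calculus as in \cite{BF2011}. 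I would therefore mirror the Balmer--Favi argument (their Theorem 7.22 / the surrounding lemmas), replacing each use of the Balmer-support tensor calculus with the comparative-support versions established above, and check that no step secretly used that $X \cong \Spc\bK^c$.
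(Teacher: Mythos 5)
Your proposal correctly identifies the right category $\bL := \Loc^{\otimes}(\itGamm_y \unit \mid y\in Y)$ and the right general flavour (a Noetherian d\'evissage over $X$), but it leaves precisely the crux of the argument undone while adding machinery that creates new difficulties.

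The core issue: you reduce everything to showing ``$L_{\bL}A\cong 0$ given $\itGamm_x(L_{\bL}A)=0$ for all $x$'' and then write that you ``would therefore mirror the Balmer--Favi argument\dots and check that no step secretly used that $X\cong\Spc\bK^c$.'' That sentence is where the entire content of the proposition lives, and it is not carried out. Moreover, the detour through a localization functor $L_{\bL}$ for the ideal $\bL$ (as opposed to $L_\bI$ for a thick ideal $\bI\subseteq\bK^c$) is avoidable and somewhat delicate: $\bL$ is not compactly generated, so neither the existence of the adjoint nor the tensor-ideal property of $\bL^\perp$ (which you use to conclude $\itGamm_x\unit\otimes L_\bL A\cong 0$ for $x\in Y$) is automatic. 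Likewise the auxiliary claim $\itGamm_x\unit\otimes\itGamm_y\unit\cong 0$ for $x\neq y$ is true but your justification (``$\mc V(x),\mc Z(x)$ separate $x$ from $y$'') doesn't cover the case where $x$ and $y$ are incomparable in the specialization order; it really requires the description of $\SuppB(\itGamm_x\unit)$ as a fiber of $\rho$ plus faithfulness of $\SuppB$, the latter of which is downstream of this very proposition.

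The paper's proof (following Barthel--Heard--Sanders) sidesteps all of this by working directly with $A$, with no auxiliary localization. One sets $Z:=\{x\in X\mid \itGamm_{\overline{\{x\}}}A\in\bL\}$, observes $Z$ is specialization-closed, and aims to show $Z=X$ (which finishes, since then $A\cong\itGamm_Z A\in\bL$). If not, Noetherianity of $X$ gives a specialization-minimal $x\notin Z$; then $S:=\overline{\{x\}}\setminus\{x\}$ lies in $Z$, and applying $\itGamm_{\overline{\{x\}}}$ to $\itGamm_S A\to A\to L_S A$ yields a triangle
\[
\itGamm_S A \longrightarrow \itGamm_{\overline{\{x\}}}A\longrightarrow \itGamm_x A,
\]
using $\itGamm_x = \itGamm_{\overline{\{x\}}}\otimes L_S$ from Lemma~\ref{cor:L_Gamma_z}. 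The outer terms are in $\bL$ ($\itGamm_S A$ by Lemma~\ref{gamma-union} and $S\subseteq Z$; $\itGamm_x A$ either because $x\in Y$ so $\itGamm_x\unit\in\bL$, or because $x\notin Y$ so $\itGamm_x A\cong 0$), hence so is the middle, contradicting $x\notin Z$. This is exactly the minimal-bad-point induction you gestured at but did not supply, and it is the step that makes the proposition nontrivial; without it the proposal is incomplete.
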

\begin{proof}
The proof closely follows \cite[Theorem~3.22]{BHS2023b}. Set 
\[
\bL \coloneqq \Loc^{\otimes}(\itGamm_y \unit : y\in Y).
\]
Define $Z\subseteq X$ to be
\[ Z\coloneqq \{x\in X: \itGamm_{\mc{V}(x)}A\in \bL\}.\]
This is a specialization closed set. We claim that $Z=X$; this would prove the lemma, since then $\itGamm_Z \unit \cong \itGamm_X \unit \cong\unit$, so that by \leref{gamma-union}
\[A\cong  \itGamm_Z A\in \Loc^{\otimes}(\itGamm_{\mc{V}(x)}A: x\in Z)\subseteq \bL.\]

To prove the claim, suppose for the sake of contradiction that there exists $x\in X\setminus Z$; choose $x$ minimal with respect to specialization (which exists because $X$ is Noetherian). Then $S\coloneqq \mc{V}(x)\setminus \{x\}\subseteq Z$ is specialization-closed (as is $\mc{V}(x)$). Therefore we have
\[\itGamm_S\unit\otimes \itGamm_{\mc{V}(x)}\unit\cong \itGamm_S\unit,\]
and Lemma~\ref{cor:L_Gamma_z} implies that
\[ \itGamm_x \unit = \itGamm_{\mc{V}(x)}\unit\otimes L_S\unit.\]
Applying the exact functor $\itGamm_{\mc{V}(x)}$ to the distinguished triangle
\[\itGamm_S A\to A \to L_S A\to, \]
we obtain
\[ \itGamm_S A\to \itGamm_{\mc{V}(x)}A\to \itGamm_x A\to .\]
We claim that the first and third terms are in $\bL$. 

On one hand, by \leref{gamma-union} we have 
\[
\Loc^{\otimes}(\itGamm_S \unit)=\Loc^{\otimes}(\itGamm_{\mc{V}(z)} \unit : z \in S),
\]
which implies that $\itGamm_{S} A$ is in the localizing ideal generated by $\itGamm_{\mc{V}(z)} A$ for all $z \in S$. Since $S \subseteq Z,$ each $\itGamm_{\mc{V}(z)} A$ is in $\bL$, so $\itGamm_S A \in \bL$ as well. On the other hand, $\itGamm_x A\in \bL$ because if $x \in Y$ then $\itGamm_x \unit \in \bL$ by definition, and if $x \not \in Y$ then $\itGamm_x A \cong 0$, which is also in $\bL$. 

Therefore $\itGamm_{\mc{V}(x)}A\in\bL$, implying that $x\in Z$, contradicting the assumption that $x\not\in Z$. Thus $Z\cong X$ and the claim is proved.
\end{proof}

\begin{theorem}
\label{comp-faith}
Let $\sigma$ be a comparative support with value in a Zariski space $X$ on a small monoidal triangulated category $\bK^c$ satisfying (\ref{kc-condition}). Then the extension $\widetilde{\sigma}$ of $\sigma$ is faithful.
\end{theorem}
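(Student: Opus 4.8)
The plan is to derive the theorem as an essentially immediate corollary of \Cref{lemma:4.17}, taking the subset $Y$ there to be empty. Unwinding \deref{ext-support}, the assertion $\widetilde{\sigma}(A)=\varnothing$ for some $A\in\bK$ says exactly that $\itGamm_x A\cong 0$ for every point $x\in X$. This is precisely the hypothesis of \Cref{lemma:4.17} with $Y=\varnothing$ (where the condition ``$\itGamm_x A=0$ for all $x\notin Y$'' becomes ``$\itGamm_x A=0$ for all $x\in X$''), so I would conclude $A\in\Loc^{\otimes}(\itGamm_y\unit\mid y\in\varnothing)$. The localizing tensor ideal generated by the empty collection of objects is the zero subcategory, hence $A\cong 0$. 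This verifies axiom (7) of \deref{ext-arb-supp} for $\widetilde{\sigma}$, i.e.\ faithfulness.

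For a reader who prefers the argument in situ rather than by reference, I would record that it is just the Noetherian induction in the proof of \Cref{lemma:4.17} run with the generating collection empty: put $Z:=\{x\in X:\itGamm_{\overline{\{x\}}}A\cong 0\}$, which is specialization-closed, and show $Z=X$. If not, choose $x\in X\setminus Z$ minimal under specialization (possible since $X$ is Noetherian), set $S:=\overline{\{x\}}\setminus\{x\}\subseteq Z$, use \Cref{cor:L_Gamma_z} to identify $\itGamm_x$ with $\itGamm_{\overline{\{x\}}}\otimes L_S$, and apply $\itGamm_{\overline{\{x\}}}\otimes -$ to the localization triangle $\itGamm_S A\to A\to L_S A\to$ to obtain a triangle $\itGamm_S A\to\itGamm_{\overline{\{x\}}}A\to\itGamm_x A\to$. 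Here $\itGamm_x A=0$ by the hypothesis $\widetilde{\sigma}(A)=\varnothing$, and $\itGamm_S A=0$ since $\itGamm_S A$ lies in the localizing tensor ideal generated by $\{\itGamm_{\overline{\{z\}}}A:z\in S\}$ (using the identity $\Loc^{\otimes}(\itGamm_S\unit)=\Loc^{\otimes}(\itGamm_{\overline{\{z\}}}\unit:z\in S)$ proved earlier in this section) and each such term vanishes because $S\subseteq Z$. Thus $\itGamm_{\overline{\{x\}}}A=0$, contradicting $x\notin Z$; hence $Z=X$, and then $A\cong\itGamm_X A$ lies in $\Loc^{\otimes}(\itGamm_{\overline{\{x\}}}A:x\in X)=\{0\}$.

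I do not anticipate any genuine obstacle here: the technical content has already been absorbed into \Cref{lemma:4.17} together with the auxiliary facts \Cref{theta-cup} and \Cref{cor:L_Gamma_z}, whose proofs (relying on the bijection between thick ideals of $\bK^c$ and Thomason-closed subsets of $\Spc\bK^c$ and on the comparison map $\rho$) are the real work of the section. The only point to be mildly careful about is the degenerate case of the ambient machinery, namely that $\Loc^{\otimes}(\varnothing)=\{0\}$, which is exactly what makes faithfulness of $\widetilde{\sigma}$ fall out for free.
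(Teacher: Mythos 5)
Your proof is correct and rests on the same key ingredient as the paper's, namely Proposition~\ref{lemma:4.17}, but you invoke it at a different specialization. The paper applies it with $Y=X$ (making the hypothesis vacuous) to get $\bK=\Loc^{\otimes}(\itGamm_x\unit : x\in X)$, then needs a further step to transfer this to $\Loc^{\otimes}(A)=\Loc^{\otimes}(\itGamm_x A : x\in X)$ before concluding. You instead apply it with $Y=\varnothing$ (making the conclusion degenerate), which absorbs both steps into one: the hypothesis $\itGamm_x A\cong 0$ for all $x\in X$ is literally $\widetilde{\sigma}(A)=\varnothing$, and the conclusion is $A\in\Loc^{\otimes}(\varnothing)=\{0\}$. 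Your version is slightly more direct, and the one point you correctly flag as needing care, $\Loc^{\otimes}(\varnothing)=\{0\}$, is indeed fine since $\{0\}$ is itself a localizing tensor ideal. Your in-situ unwinding of the Noetherian induction also tracks the proof of Proposition~\ref{lemma:4.17} faithfully.
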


\begin{proof}
The argument will be similar to the one given by Stevenson for the Balmer support in the commutative case \cite[Theorem 4.19(2)]{Stevenson2013}, although without the assumption that $\bK$ arises as the homotopy category of a monoidal model category (this assumption was used in the analogue of Proposition~\ref{lemma:4.17}, but was removed in \cite[Theorem 3.22, Remark 3.24]{BHS2023b}).

First, note that  Proposition~\ref{lemma:4.17} implies that
\[ \bK=\Loc^{\otimes}(\itGamm_x \unit : x\in X),\] since $\unit$ is in $\Loc^{\otimes}(\itGamm_x \unit : x\in X).$
From this, it follows that
\[
\Loc^{\otimes}(A) = \Loc^{\otimes}(\itGamm_x A : x \in X).
\]
Therefore 
\[ A\cong 0\iff \Loc^{\otimes}(A)=\{0\} \iff \itGamm_x A\cong0\;\;\forall\; \; x\in X\iff \widetilde{\sigma}(A)=\varnothing.\]
\end{proof}

In particular, since the Balmer support is always comparative, we may now deduce the following. 

\bco{balmer-faith}
Let $\bK^c$ be a small $\Spc$-Noetherian monoidal triangulated category. Then the extended Balmer support $\SuppB$ on $\bK$ is faithful.
\eco

Lastly, we note that using the work of this section we can prove that the extension of the Balmer support we have given is the unique faithful tensorial extension of $\suppB$.

\begin{proposition}\label{uniqueness_of_balmer_extension}
Let $\bK^c$ be a small $\Spc$-Noetherian monoidal triangulated category. Suppose that $\widehat{\sigma}$ is a faithful and tensorial extension of the Balmer support $\suppB$ from $\bK^c$ to $\bK$. Then $\widehat{\sigma}=\SuppB$.
\end{proposition}

\begin{proof}
We will use throughout the proof that $\SuppB$ is both tensorial and faithful (\prref{tensor-balmer} and \coref{balmer-faith}). First, we claim that it suffices to show that $\widehat{\sigma}(\itGamm_{\bP}\unit)=\{ \bP\}$ for all $\bP \in \Spc \bK^c$, by the following argument. Assume the above statement; we must show that for arbitrary $A\in \bK$, we have $\bP \in \widehat{\sigma}(A)$ if and only if $\bP \in \SuppB(A)$. For the first direction, suppose that $\bP \in \SuppB(A)$. Note that
\[
\bP\in \SuppB(A)\iff \itGamm_{\bP} \unit\otimes A\not \cong 0\iff \widehat{\sigma}(\itGamm_{\bP}\unit\otimes A)\ne\varnothing,
\]
using the assumed faithfulness of $\widehat{\sigma}$. 
By tensoriality, $\widehat{\sigma}(\itGamm_{\bP}\unit\otimes A)\subseteq \widehat{\sigma}(\itGamm_{\bP}\unit)=\{\bP\}$, but since it is not empty, it must be equal to $\{\bP\}$. Again by tensoriality, $\{\bP\}=\widehat{\sigma}(\itGamm_{\bP}\unit\otimes A)\subseteq \widehat{\sigma}(A)$, so we find that $\bP \in\SuppB(A)$ implies that $\bP\in \widehat{\sigma}(A)$. 
On the other hand suppose $\bP\in \widehat{\sigma}(A)$. Now since (see the proof of Theorem \ref{comp-faith})
\[ \Loc^\otimes(A)=\Loc^\otimes(\itGamm_{\bQ} \unit\otimes A: \bQ\in \Spc \bK^c), \]
by \leref{support-local}, it must be the case that $\bP\in \widehat{\sigma}(\itGamm_{\bQ}\unit \otimes A)$ for some $\bQ\in \Spc \bK^c$. But by our assumptions, $\{\bQ\}=\widehat{\sigma}(\itGamm_{\bQ}\unit)\supseteq \widehat{\sigma}(\itGamm_{\bQ}\otimes A)$. Therefore $\bP=\bQ$ and $\bP\in \widehat{\sigma}(\itGamm_{\bP}\unit \otimes A)$; in particular $\itGamm_{\bP}\unit \otimes A\not \cong 0$ and hence $\bP\in\SuppB(A)$. This shows our first claim.

To complete the proof, we now just need to show that $\widehat{\sigma}(\itGamm_{\bP})=\{\bP\}$ for each $\bP \in \Spc \bK^c$. Recall that since $\mc{V}(\bP)$ is closed, we have some $A\in \bK^c$ with $\suppB(A)=\mc{V}(\bP)$. Note that $\itGamm_{\bP}\unit\in \Loc^\otimes(\itGamm_{\mc{V}(\bP)}\unit)$ by the definition of $\itGamm_{\bP} \unit$ (see \notaref{confusing-notation-two}). On the other hand $\itGamm_{\mc{V}(\bP)}\unit\in \Loc^\otimes(\Theta(\mathcal{V}(\bP)))=\Loc^\otimes(A)$ by the definition of $\itGamm_{\mc{V}(\bP)}$ and by Theorem~\ref{class-ideals}. All this is to say, $\itGamm_{\bP}\unit \in \Loc^\otimes(A)$. By \leref{support-local}, any object $B\in \Loc^\otimes(A)$ satisfies
$\widehat{\sigma}(B)\subseteq \widehat{\sigma}(A)=\suppB(A)=\mc{V}(\bP)$. It follows that 
\[ \widehat{\sigma}(\itGamm_{\bP} \unit)\subseteq \mc{V}(\bP).\]

Now take any $\bQ\in \mc{V}(\bP)$ for $\bQ\ne \bP$. Then as before, by realization for the Balmer support there exists $B\in \bK^c$ with $\suppB(B)=\mc{V}(\bQ)$. Since $\bP \not \in \suppB(B)=\SuppB(B)$, we have $\itGamm_{\bP} \unit \otimes B\cong 0$. Therefore, by tensoriality of $\widehat{\sigma}$ (using that $B\in\bK^c$) we have 
\[ \varnothing=\widehat{\sigma}(\itGamm_{\bP} \unit \otimes B)=\widehat{\sigma}(\itGamm_{\bP} \unit)\cap \widehat{\sigma}(B)=\widehat{\sigma}(\itGamm_{\bP}\unit)\cap \mc{V}(\bQ).\]
In particular for all $\bQ\in \mc{V}(\bP)$ with $\bQ\ne \bP$, we have $\bQ\not\in \widehat{\sigma}(\itGamm_{\bP}\unit)$. This implies $\widehat{\sigma}(\itGamm_{\bP}\unit)$ is either $\varnothing$ or $\{\bP\}$. But $\itGamm_{\bP}\unit\not \cong 0$, we conclude that $\widehat{\sigma}(\itGamm_{\bP}\unit)=\{\bP\}$, completing the proof.
\end{proof}
\bre{replace-balmer-uniqueness}
Note that the proof of Proposition \ref{uniqueness_of_balmer_extension} uses critically that the Balmer support is tensorial, faithful, realizing, and based on a Zariski space, and has a tensorial and faithful extension. Recall that any other support satisfying these conditions is homeomorphic to the Balmer support by Theorem~\ref{nvy-restated}. Hence this present proof does not immediately extend to other support data of the form $(X, \sigma)$ where $X$ is not homeomorphic to $\Spc \bK^c$.
\ere

\section{Faithfulness of extended realizing tensorial support}
\label{sect-tensorial}

Throughout this section, assume that $(X,\sigma)$ is a realizing tensorial support, and drop the assumption from the previous section that $\sigma$ is comparative. We will assume that $\Spc \bK^c$ is Noetherian. In this section we determine when the extension $\widetilde{\sigma}$ is faithful, giving a characterization in terms of the universal map $\eta: X \to \Spc \bK^c$.

Recall that by \prref{eta-inj}, since $\sigma$ is realizing, we have that $\eta$ is injective, a fact that we use throughout the section. We note the following characterization of faithfulness of $\sigma$ in our setting in terms of the image of $\eta$; this lemma will not be used explicitly for the results of this section, but is helpful to keep in mind for the purpose of constructing examples.
\begin{lemma}
\label{lemma:all_closed_points_in_X}
Let $(X,\sigma)$ be a realizing tensorial support on a small $\Spc$-Noetherian monoidal triangulated category $\bK^c$. Then $\sigma$ is faithful if and only if the image of $\eta$ contains all closed points of $\Spc\bK^c$.
\end{lemma}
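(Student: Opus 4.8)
The plan is to characterize faithfulness of $\sigma$ via the object-level translation $\sigma(A) = \eta^{-1}(\suppB(A))$ (valid since $\sigma$ is tensorial, by \Cref{Balmer-univ}) together with the classification of thick ideals in the $\Spc$-Noetherian case (\Cref{class-ideals}). First I would establish the easy direction ($\Leftarrow$): suppose the image of $\eta$ contains every closed point of $\Spc\bK^c$, and let $A\in\bK^c$ with $\sigma(A)=\varnothing$. Then $\eta^{-1}(\suppB(A))=\varnothing$, so $\suppB(A)$ is disjoint from $\eta(X)$, hence contains no closed point of $\Spc\bK^c$. But $\suppB(A)$ is a closed (indeed Thomason-closed) subset of the Noetherian space $\Spc\bK^c$; any nonempty closed subset of a Noetherian, hence $T_0$ and moreover ``every nonempty closed set has a closed point'' — actually I should use that a Noetherian space in which every nonempty closed subset has a closed point; Noetherian spaces do have this property (take a minimal nonempty closed subset of $\overline{\{\bP\}}$, which is then a closed point) — contains a closed point of $\Spc\bK^c$. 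Therefore $\suppB(A)=\varnothing$, and faithfulness of $\suppB$ (\Cref{balmer-supp}, using rigidity of $\bK^c$) gives $A\cong 0$.

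For the converse ($\Rightarrow$), I would argue contrapositively: suppose some closed point $\fm\in\Spc\bK^c$ is not in the image of $\eta$. The goal is to produce a nonzero compact object with empty $\sigma$-support. Since $\{\fm\}=\overline{\{\fm\}}$ is closed and $\Spc\bK^c$ is Noetherian, $\{\fm\}$ is Thomason-closed, so by \Cref{class-ideals} there is a thick ideal $\bI$ with $\Phi_{\suppB}(\bI)=\{\fm\}$, and $\bI\neq 0$ because $\fm$ is a genuine point (so some nonzero $A\in\bI$ has $\suppB(A)=\{\fm\}$; indeed $\Theta_{\suppB}(\{\fm\})$ is a nonzero thick ideal). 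Pick such an $A$: then $\suppB(A)=\{\fm\}$, so $\sigma(A)=\eta^{-1}(\{\fm\})=\varnothing$ since $\fm\notin\eta(X)$, while $A\not\cong 0$. Hence $\sigma$ is not faithful.

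The main subtlety I anticipate is the purely topological input in the $\Leftarrow$ direction — namely that a nonempty closed subset of a Noetherian space meets the set of closed points of the ambient space. One must be slightly careful: a closed point of a closed subset $Z$ need not be a closed point of $\Spc\bK^c$ a priori, but since $Z$ is itself closed in $\Spc\bK^c$, closed points of $Z$ are closed in $\Spc\bK^c$; and $Z$ being a nonempty Noetherian space has a closed point (a minimal element with respect to specialization order exists by the descending chain condition on closed subsets). I would spell this out as a short lemma-free paragraph. The rest is a routine unwinding of definitions and applications of the universal property and \Cref{class-ideals}; the only other thing to double-check is that $\Theta_{\suppB}(\{\fm\})\neq 0$, which holds because $\{\fm\}$ is a nonempty Thomason-closed set and the bijection of \Cref{class-ideals} sends it to a nonzero thick ideal (a zero ideal would correspond to $\varnothing$).
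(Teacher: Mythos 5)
Your proof is correct and takes essentially the same approach as the paper: both directions rest on the identity $\sigma(A)=\eta^{-1}(\suppB(A))$, the classification of thick ideals by Thomason-closed subsets in the Noetherian case to realize $\{\fm\}$ as some $\suppB(A)$, faithfulness of $\suppB$, and the fact that a nonempty closed subset of a Noetherian space contains a closed point of the ambient space. You phrase each implication contrapositively where the paper phrases it directly, and you spell out the topological point about closed points more carefully, but the content is the same.
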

\begin{proof}
Assume $\sigma$ faithful. If $\bP$ is a closed point of $\Spc \bK^c$, we can find an object $A$ such that $\suppB(A)=\{\bP\}$ by \Cref{class-ideals} and the Noetherianity of $\Spc \bK^c$. Since $\sigma(A)=\eta^{-1}(\suppB(A))$, and since $\sigma$ is faithful, there must be a point in the preimage of $\bP$.

For the other direction, if all closed points are in the image of $\eta$, and if $A$ is in $\bK^c$, then $\sigma(A)=\eta^{-1}(\suppB(A))$ is nonempty since $\suppB$ is faithful and closed sets of Noetherian topological spaces always contain closed points.
\end{proof}

We give examples below where, even though $\sigma$ is faithful, the extended support $\widetilde{\sigma}$ will fail to be faithful. Our goal this section will be to describe precisely when $\widetilde{\sigma}$ is faithful. In order to do that, we will introduce a convenient format for expressing the data of the underlying topological spaces of our supports.

\bnota{spec-poset}\label{spec-poset}
    Let $X$ be a topological space. We will use the convention that the specialization poset of $X$ will be defined by $x \leq y$ if and only if $x \in \mc{V}(y)=\overline{\{y\}}$, for all $x$ and $y$ in $X$. We write the directed graph of the specialization poset by drawing an edge $x \to y$ if and only if $y\in\mc{V}(x)$ and there does not exist another point $z\in X$ such that $\mc{V}(x)\supsetneq \mc{V}(z)\supsetneq \mc{V}(y)$ (i.e., $y$ is a specialization of $x$ in the minimal sense).
\enota

\bre{every-spec-poset}\label{every-spec-poset}
Recall that a space is called {\it{spectral}} if it is isomorphic to the spectrum of a commutative ring. Noetherian spectral spaces are completely characterized by their specialization posets: two Noetherian spectral spaces are homeomorphic if and only if their specialization posets are isomorphic \cite[Proposition~1.26]{tedd2017ring} (although if we no longer require Noetherianity, a specialization poset can be given distinct topologies which still give it the structure of a spectral space). Every finite poset arises as the specialization poset of a finite spectral space (see \cite[\S3]{tedd2017ring}, which also surveys several methods to construct said rings). For any ring $R$ with Noetherian spectrum, we have a homeomorphism $\Spec R\cong \Spc\bK^c$ where $\bK=\der(R)$ and $\bK^c=\dperf(R)$ \cite{Thomason1997} where $\der(R)$ is the derived category of $R$-modules and $\dperf(R)$ is the perfect derived category of $R$-modules. Therefore any finite poset is the specialization poset of the Balmer spectrum of some small monoidal triangulated category, which we use to construct examples below.
\ere

We begin by giving an example of a tensorial realizing support whose extension is not faithful. Note that in the directed graphs of the specialization posets of $X$ and $\Spc \bK^c$, closed points are always the end nodes and every connected component of the specialization poset contains at least one closed point.

\bex{z-one-over-p}
Consider the following map of specialization posets:
\[\begin{tikzcd}
                                       &                                         &  &   & x \arrow[ld] \arrow[rd] &   \\
\widehat y \arrow[rrr, dotted, bend right] & \widehat z \arrow[rrrr, dotted, bend right] &  & y &                         & z
\end{tikzcd}\]
We can choose $\bK^c$ such that the topological space associated to the right side, given by three points $\{x,y,z\}$ such that $y,z$ are closed points and $\overline{\{x\}}=\{x,y,z\}$, is homeomorphic to $\Spc\bK^c$ by the discussion above. This spectral space is concretely realized as $\textnormal{Spec}\, R\cong \Spc \dperf(R)\cong\Spc \der (R)^c$, where $R=\mathbb{Z}[\frac{1}{p}: p\ne 2,3]$. Now we consider an alternate support based on the topological space $X=\{\widehat{y},\widehat{z}\}$ consisting of two distinct closed points. We have the continuous map $\eta: X \to \Spc \bK^c$ sending $\widehat{y}$ to $y$ and $\widehat{z}$ to $z$, as in the dotted arrows in the diagram above. This induces a support datum $\sigma$ based on $X$, where we define $\sigma(A)=\eta^{-1}(\suppB(A))$. This support is tensorial and realizing, which we can observe directly from the corresponding properties of $\suppB$. Consider the object
\[A\coloneqq \itGamm_{x} \unit \cong \itGamm_{\mathcal{V}(x)}\unit \otimes L_{\mathcal{Z}(x)} \unit \cong \itGamm_{\{x,y,z\}} \unit \otimes L_{\{y,z\}} \unit \cong L_{\{y,z\}} \unit\in \bK.\]
This object has the property that $\SuppB(A)=\{x\}$, hence it is nonzero. We will see that $\widetilde{\sigma}(A)=\varnothing$. Let us examine the elements $\itGamm_{\widehat{y}} A$ and $\itGamm_{\widehat{z}} A$. We have
\begin{align*}
    \itGamm_{\widehat{y}} \unit &\cong  \itGamm_{\{\widehat{y}\}} \unit \otimes L_{\{\widehat{z}\}}\unit ,\\
    \itGamm_{\widehat{z}} \unit  &\cong \itGamm_{\{\widehat{z}\}} \unit \otimes L_{\{\widehat{y}\}}\unit.
\end{align*}
Computing the ideals directly, we see by the definition of the support $\sigma$ that 
\begin{align*}
    \Theta(\{\widehat{z}\}) &= \Theta(\{z\}),\\
    \Theta(\{\widehat{y}\}) &= \Theta(\{y\}).
\end{align*}
This implies that 
\begin{align*}
    \itGamm_{\widehat{y}} \unit \otimes A &\cong \itGamm_{\{\widehat{y}\}} \unit \otimes L_{\{\widehat{z}\}} \unit \otimes L_{\{y,z\}} \unit \cong \itGamm_{\{y\}}\unit\otimes L_{\{y,z\}}\unit\cong 0,\\
    \itGamm_{\widehat{z}} \unit \otimes A &\cong \itGamm_{\{\widehat{z}\}} \unit \otimes L_{\{\widehat{y}\}} \unit \otimes L_{\{y,z\}} \unit \cong \itGamm_{\{z\}}\unit\otimes L_{\{y,z\}}\unit\cong 0,\\
    &\implies \widetilde{\sigma}(A)=\varnothing.
\end{align*}
This shows that we have a nontrivial support $(X,\sigma)$ which is faithful, exhibits realization and tensor product property, and yet its extension $\widetilde{\sigma}$ is not faithful.
\eex

The main result of this section characterizes exactly when the extension of a tensorial support is faithful; we deduce it from the following theorem, which tells us when the extended support of the $\itGamm_{\bP} \unit$ objects are empty.

\begin{theorem}\label{thm:O_2_faithful}
Let $(X,\sigma)$ be a realizing faithful tensorial support on a small $\Spc$-Noetherian monoidal triangulated category $\bK^c$ with extension $\widetilde{\sigma}$, and let $\eta:X\to\Spc\bK^c$ be the universal map. Then for $\bP \in \Spc \bK^c$, we have $\widetilde{\sigma}(\itGamm_{\bP}\unit )=\varnothing$ if and only if $\eta^{-1}(\mc{V}(\bP))$ does not have a unique generic point.
\end{theorem}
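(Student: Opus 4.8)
The plan is to analyze $\widetilde{\sigma}(\itGamm_{\bP}\unit)$ by translating the condition $\itGamm_x(\itGamm_{\bP}\unit)\ne 0$ into a statement about the poset structure of $\eta^{-1}(\overline{\{\bP\}})$. First I would note that since $\eta$ is injective (by \prref{eta-inj}, using that $\sigma$ is realizing), we may identify $X$ with its image in $\Spc\bK^c$; the subspace $\eta^{-1}(\overline{\{\bP\}})$ is then a specialization-closed subset of $X$, being the preimage of a closed set under a continuous map, and it is the key object of study. The main computational input will be \leref{cor:L_Gamma_z}, which tells us $L_Z\unit\otimes\itGamm_Y\unit$ depends only on $Y\cap Z^c$; applying this with $Y=\mc{V}(x)$ and $Z=\mc{Z}(x)$ shows $\itGamm_x\unit$ is controlled by the ``slice'' $\{x\}$ (as a difference of specialization-closed sets), and a similar analysis applies after tensoring with $\itGamm_{\bP}\unit$, which by \Cref{lemma:commutes_with_Gamma_L}(2)(3) is the same as applying $\itGamm_{\mc{V}(\bP)}\otimes L_{\mc{Z}(\bP)}$ — but here $\mc{V}(\bP)$ and $\mc{Z}(\bP)$ are subsets of $\Spc\bK^c$, so one must be careful to pull back to $X$ via $\eta$.

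The heart of the argument is the following dichotomy. If $\eta^{-1}(\overline{\{\bP\}})$ \emph{has} a unique generic point $x_0$, I would show $\itGamm_{x_0}(\itGamm_{\bP}\unit)\ne 0$, so $x_0\in\widetilde{\sigma}(\itGamm_{\bP}\unit)$ and the support is nonempty: the point is that $\mc{V}(x_0)$ in $X$ equals $\eta^{-1}(\overline{\{\bP\}})$, and one can compute the relevant tensor product of Rickard idempotents and show it survives, essentially because $\itGamm_{\bP}\unit$ is itself built from the slice at $\bP$ and $x_0$ "sees" exactly that slice. Conversely, if $\eta^{-1}(\overline{\{\bP\}})$ has \emph{no} unique generic point — meaning it is a union of two or more specialization-closed proper subsets, or more precisely it is not irreducible-with-generic-point — then for each $x\in X$ I would argue $\itGamm_x(\itGamm_{\bP}\unit)=0$. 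The cleanest route is: for $x\notin\eta^{-1}(\overline{\{\bP\}})$, the set $\mc{V}(x)\cap(\text{slice at }\bP)$ is already too small (one shows $L_{\mc{Z}(\bP)}\unit$ kills it, using faithfulness and $\Spc$-Noetherianity together with \Cref{class-ideals} and \leref{tensor-idem-intersect}/\leref{rickard-idem-supp}); and for $x\in\eta^{-1}(\overline{\{\bP\}})$, since this set has no generic point, $\mc{V}(x)$ (taken inside $X$, hence inside $\eta^{-1}(\overline{\{\bP\}})$) is a \emph{proper} specialization-closed subset of $\eta^{-1}(\overline{\{\bP\}})$, so $\itGamm_{\mc{V}(x)}\unit\otimes\itGamm_{\bP}\unit$ and then $\itGamm_x\unit\otimes\itGamm_{\bP}\unit$ vanish — this is where one invokes \leref{cor:L_Gamma_z} to reduce to the slice and observe the slice at $x$ within $\eta^{-1}(\overline{\{\bP\}})$ is empty.

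I expect the \textbf{main obstacle} to be the bookkeeping between $X$ and $\Spc\bK^c$: the Rickard idempotents $\itGamm_{\bP}\unit$ are defined using the topology of $\Spc\bK^c$ while $\itGamm_x\unit$ uses the topology of $X$, and one must repeatedly use the universal property $\sigma(A)=\eta^{-1}(\suppB(A))$, \Cref{class-ideals}, and the lemma $\langle\Theta(V)\cup\Theta(W)\rangle=\Theta(V\cup W)$ (\leref{theta-cup}) to match up ideals on the two sides. Concretely, the subtle point is that $\Theta(\mc{V}(x))$ and the preimage under $\eta$ of $\mc{V}(\bP)$ need not coincide; one must show that what matters — namely $L_{\mc{Z}(x)}\unit\otimes\itGamm_{\mc{V}(x)}\unit\otimes L_{\mc{Z}(\bP)}\unit\otimes\itGamm_{\mc{V}(\bP)}\unit$ — depends, by repeated application of \leref{cor:L_Gamma_z} and \Cref{gamma-union}, only on $\mc{V}(x)\cap\mc{Z}(x)^c\cap(\text{the slice at }\bP)$, and that this intersection (a subset of $X$) is nonempty precisely when $x$ is the unique generic point of $\eta^{-1}(\overline{\{\bP\}})$. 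Getting this intersection computation exactly right, and handling the case where $\eta^{-1}(\overline{\{\bP\}})$ is reducible versus irreducible-without-generic-point uniformly, is the delicate part; \Cref{prop:comparative=existence_uniqueness_generic_points} and \reref{unique-compar} suggest the right framing is in terms of generic points, which should streamline the final combinatorial step.
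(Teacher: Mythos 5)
Your proof plan has the right high-level shape — in particular you correctly anticipate the characterization via the unique generic point of $\eta^{-1}(\overline{\{\bP\}})$ and correctly flag the bookkeeping between $X$ and $\Spc\bK^c$ as the crux — but the specific machinery you propose to invoke is not available under the hypotheses of this theorem, and this is a genuine gap. You lean on \leref{cor:L_Gamma_z}, \Cref{gamma-union}, and \leref{theta-cup} to manipulate the idempotents $\itGamm_{\mc{V}(x)}\unit$, $L_{\mc{Z}(x)}\unit$ built from the support $\sigma$. All three of those results are proved in Section~\ref{sect-extcomp} only under the standing hypothesis that $\sigma$ is \emph{comparative}; their proofs genuinely use the comparison map (e.g.\ \leref{theta-cup} is how one gets $\langle \Theta(V)\cup\Theta(W)\rangle = \Theta(V\cup W)$, which fails for non-comparative $\sigma$ — this is exactly what goes wrong in \exref{z-one-over-p}). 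But in \Cref{thm:O_2_faithful}, $\sigma$ is only assumed realizing, faithful, and tensorial; comparativity is precisely what \Cref{tensor-faith-iff} and \Cref{prop:comparative=existence_uniqueness_generic_points} show to be \emph{equivalent} to faithfulness of $\widetilde{\sigma}$, so assuming it would be circular.

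The way to close the gap — and this is what the paper does — is to avoid manipulating the $\sigma$-idempotents directly and instead reduce everything to the extended Balmer support $\SuppB$, which is always comparative (with $\eta=\rho=\id$), so \leref{rickard-idem-supp} and \leref{tensor-idem-intersect} apply unconditionally (they only require $\Spc$-Noetherianity). Concretely: $\widetilde{\sigma}(\itGamm_{\bP}\unit)\ne\varnothing$ iff there is $\widehat{\bQ}\in X$ with $\itGamm_{\widehat{\bQ}}\unit\otimes\itGamm_{\bP}\unit\ne 0$. By faithfulness of $\SuppB$ (\coref{balmer-faith}) and the tensor product property of $\SuppB$ on idempotents (\leref{tensor-idem-intersect}), this holds iff $\bP\in\SuppB(\itGamm_{\mc{V}(\widehat{\bQ})}\unit)\cap\SuppB(L_{\mc{Z}(\widehat{\bQ})}\unit)$. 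Then the key computation you did not identify is
\[
\SuppB(\itGamm_{\mc{V}(\widehat{\bQ})}\unit)
= \Phi_{\suppB}\bigl(\Theta_\sigma(\mc{V}(\widehat{\bQ}))\bigr)
= \bigl\{\bR : \eta^{-1}(\overline{\{\bR\}}) \subseteq \mc{V}(\widehat{\bQ})\bigr\},
\]
using only $\sigma = \eta^{-1}\circ\suppB$ (the universal property), realization of $\suppB$ from Noetherianity of $\Spc\bK^c$, and \leref{rickard-idem-supp}; similarly for $L_{\mc{Z}(\widehat{\bQ})}\unit$. From this one reads off directly that $\bP$ lies in the intersection iff $\widehat{\bQ}\in\eta^{-1}(\overline{\{\bP\}})\subseteq\mc{V}(\widehat{\bQ})$, i.e.\ iff $\widehat{\bQ}$ is a generic point of $\eta^{-1}(\overline{\{\bP\}})$ — no comparative hypothesis needed. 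If you restructure your argument to compute $\Phi_{\suppB}(\Theta_\sigma(-))$ in $\Spc\bK^c$ instead of trying to intersect slices inside $X$, your plan becomes correct.
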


Recall that, in terms of the specialization poset, a generic point is a unique vertex which is the root of all points.

\begin{proof}
Since $\eta$ is injective, if $\bP \in \Spc \bK^c$ is in the image of $\eta$, it has a unique preimage; for ease of notation, denote this preimage by $\widehat{\bP}$, if it exists.

First, $\widetilde{\sigma}(\itGamm_{\bP} \unit )\ne\varnothing$ if and only if there exists $\bQ\in\Spc\bK^c$ with preimage $\widehat{\bQ}\in X$ such that $\itGamm_{\widehat{\bQ}} \unit \otimes \itGamm_{\bP}\unit \not \cong 0$. Since $\itGamm_{\widehat{\bQ}} \unit \cong \itGamm_{\mc{V}(\widehat{\bQ})} \unit \otimes L_{\mathcal{Z}(\widehat{\bQ})} \unit$, by the tensor product property from \leref{tensor-idem-intersect}, this is true if and only if $\bP\in\SuppB(\itGamm_{\mc{V}(\widehat{\bQ})} \unit)$ and $\bP\in\SuppB(L_{\mathcal{Z}(\{\widehat{\bQ})\}} \unit )$. We will see that these two conditions force $\widehat{\bQ}$ to be the unique generic point of $\eta^{-1}(\mc{V}(\bP))$.

By \leref{rickard-idem-supp},  
\begin{align*}
    \SuppB(\itGamm_{\mc{V}(\widehat{\bQ})} \unit) &= \Phi_{\suppB}(\Theta(\mc{V}(\widehat{\bQ})))\\
    &=\{\bR \in \Spc \bK^c: \exists A \in \Theta(\mc{V}(\widehat{\bQ})) \text{ with } \bR \in \suppB(A)\}\\
    &=\{\bR \in \Spc \bK^c : \exists A \text{ with } \sigma(A) \subseteq \mc{V}(\widehat{\bQ}) \text{ and } \bR \in \suppB(A)\}\\
    &=\{\bR \in \Spc \bK^c : \exists A \text{ with } \eta^{-1}(\suppB(A)) \subseteq \mc{V}(\widehat{\bQ}) \text{ and } \bR \in \suppB(A)\}\\
    &= \{\bR \in \Spc \bK^c : \eta^{-1} ( \mc{V}(\bR)) \subseteq  \mc{V}(\widehat{\bQ})\}.
\end{align*}
Hence $\bP\in\SuppB(\itGamm_{\mc{V}(\widehat{\bQ)}} \unit)$ if and only if $\eta^{-1}(\mc{V}(\bP)) \subseteq \mc{V}(\widehat{\bQ}).$ An analogous computation shows that $\bP\in\SuppB(L_{\mathcal{Z}(\widehat{\bQ})} \unit )$ if and only if $\eta^{-1}(\mc{V}(\bP)) \not \subseteq \mathcal{Z}(\widehat{\bQ}).$ But all points of $\mc{V}(\widehat{\bQ})$ except for $\widehat{\bQ}$ are also in $\mc{Z}(\widehat{\bQ})$ (recall that we are assuming $X$ is $T_0$). To summarize, then, we have that $\widehat{\bQ} \in \widetilde{\sigma}(\itGamm_{\bP} \unit)$ if and only if  
\[
\widehat{\bQ} \in \eta^{-1}(\mc{V}(\bP)) \subseteq \mc{V}(\widehat{\bQ}).
\]
But then we have
\[
\eta^{-1}(\mc{V}(\bP)) = \mc{V}(\widehat{\bQ}),
\]
that is, $\widehat{\bQ}$ is a generic point of $\eta^{-1}(\mc{V}(\bP)).$

\end{proof}
\begin{theorem}
\label{tensor-faith-iff}
Let $(X,\sigma)$ be a realizing faithful tensorial support on a small $\Spc$-Noetherian monoidal triangulated category $\bK^c$ with extension $\widetilde{\sigma}$. Then $\widetilde{\sigma}$ is faithful if and only $\eta^{-1}(\mc{V}(\bP))$ has a unique generic point for all $\bP \in \Spc \bK^c$.
\end{theorem}
\begin{proof}
By Theorem \ref{thm:O_2_faithful}, if $\widetilde{\sigma}$ is faithful then for each $\bP \in \Spc \bK^c$, the closed subset $\eta^{-1}(\mc{V}(\bP))$ has a unique generic point. 

For the other direction, suppose that every $\bP \in \Spc \bK^c$ satisfies the condition that $\eta^{-1}(\mc{V}(\bP))$ has a unique generic point, and let $A \in \bK$ be nonzero. We must show that $\widetilde{\sigma}(A) \not = \varnothing.$ Since we know that $\SuppB$ is faithful, by \coref{balmer-faith}, we know that there exists $\bP \in \Spc \bK^c$ such that $\itGamm_{\bP} \unit \otimes A \not \cong 0$. Let $\bQ \in \Spc \bK^c$ be the point in the image of $\eta$ such that $\widehat{\bQ} \in X$ is the unique generic point of $\eta^{-1}(\mc{V}(\bP))$, in particular $\bQ$ is in the closure of $\{\bP\}$. We claim that $\itGamm_{\widehat{\bQ}} \unit \otimes A \not \cong 0$. Indeed, it is enough to prove that $\itGamm_{\widehat{\bQ}} \unit \otimes \itGamm_{\bP} \unit \cong \itGamm_{\bP}\unit$. To prove this, it is now enough to prove that 
\begin{enumerate}
\item $\itGamm_{\mc{V}(\widehat{\bQ})} \unit \otimes \itGamm_{\mc{V}(\bP)} \unit \cong \itGamm_{\mc{V}(\bP)} \unit$ and 
\item $L_{\mc{Z}(\widehat{\bQ})} \unit \otimes L_{\mc{Z}(\bP)} \unit \cong L_{\mc{Z}(\bP)}\unit$,
\end{enumerate}
by the definitions of $\itGamm_{\widehat{\bQ}}$ and $\itGamm_{\bP}$. By Lemma \ref{lemma:commutes_with_Gamma_L}, to prove (1) and (2) it is enough to show
\begin{enumerate}
    \item[(3)] $\Theta(\mc{V}(\bP)) \subseteq \Theta(\mc{V}(\widehat{\bQ})$, and 
    \item[(4)] $\Theta(\mc{Z}(\bP))\supseteq \Theta(\mc{Z}(\widehat{\bQ}))$.
\end{enumerate}
For (3): suppose $B \in \bK^c$ and $\suppB(B) \subseteq \mc{V}(\bP).$ Since $\sigma(B)=\eta^{-1}(\suppB(B))$, this implies that $\sigma(B) \subseteq \eta^{-1}(\mc{V}(\bP)),$ which implies that $\sigma(B) \subseteq \mc{V}(\widehat{\bQ})$ from the assumption that $\eta^{-1}(\mc{V}(\bP)) = \mc{V}(\widehat{\bQ})$. By definition, this implies that $B \in \Theta(\mc{V}(\widehat{\bQ}))$. 

For (4), recall that $\bQ$ lies in the closure of $\{\bP\}$. If $ C\in \Theta(\mc{Z}(\widehat{\bQ}))$, then $\sigma(C)\subseteq \mc{Z}(\widehat{\bQ})$. We need to show that $\suppB(C)\not\ni \bP$. But if it were not so, then $\suppB(C)\supseteq \mc{V}(\bP)\ni\bQ$, and then $\sigma(C)=\eta^{-1}(\suppB(C))\ni \widehat{\bQ}$, a contradiction.
\end{proof}

\bex{tensorial_example_2}
Consider the following example. We may choose $\bK^c$ so that $\Spc \bK^c$ is homeomorphic to the space of four points $\{a,b,c,d\}$ where $d$ is closed, $b$ and $c$ specialize to $d$, and $a$ is a generic point. Then we can consider $X$ to be the subspace consisting of preimages of $b$, $c$, and $d$; let $\eta: X \to \Spc \bK^c$ be the inclusion map, so that $\eta$ induces the following map on the level of specialization posets:
\[\begin{tikzcd}
                                                      &                                            &                                                       &              & a \arrow[ld] \arrow[rd] &              \\
\widehat{b} \arrow[rd] \arrow[rrr, dotted, bend left] &                                            & \widehat{c} \arrow[ld] \arrow[rrr, dotted, bend left] & b \arrow[rd] &                         & c \arrow[ld] \\
                                                      & \widehat{d} \arrow[rrr, dotted, bend left] &                                                       &              & d                       &             
\end{tikzcd}\]
We have a realizing faithful tensorial support $\sigma$ induced by $\eta$. By Theorem \ref{thm:O_2_faithful}, the extended support $\widetilde{\sigma}$ is not faithful, since the preimage $\eta^{-1}(\mc{V}(a))=X$ does not have a unique generic point.
\eex

\bex{tensorial-ex-3}
Consider \exref{tensorial_example_2} but where we now take $X$ to only have preimages of $b$ and $d$. Then $\eta$ induces the following map of specialization posets:
\[\begin{tikzcd}
                                                     &                                           &              & a \arrow[ld] \arrow[rd] &              \\
\widehat{b} \arrow[rd] \arrow[rr, dotted, bend left] &                                           & b \arrow[rd] &                         & c \arrow[ld] \\
                                                     & \widehat{d} \arrow[rr, dotted, bend left] &              & d                       &             
\end{tikzcd}\]
In this case, $\widetilde{\sigma}$ is faithful by Theorem \ref{thm:O_2_faithful}, since $\eta^{-1}(\mc{V}(a))=X$, $\eta^{-1}(\mc{V}(b))=\{\widehat{b}, \widehat{d}\},$ $\eta^{-1}(\mc{V}(c))=\{\widehat{d}\}$ and $\eta^{-1}(\mc{V}(d))=\{\widehat{d}\}$ each have a unique generic point.
\eex

\bre{etainverse-support}
In this situation, where $\sigma$ is faithful, tensorial, and realizing, there is a second possible extension of the support $\sigma$ other than $\widetilde{\sigma}$, namely, 
\[
\widehat{\sigma}(A):=\eta^{-1}(\SuppB(A)).
\]
It is straightforward to verify that $\widehat{\sigma}$ is a tensorial extension of $\sigma$. However, $\widehat{\sigma}$ is faithful if and only if $\eta$ is a homeomorphism; this can be seen directly or deduced from Theorem~\ref{nvy-restated}. In fact, there is a containment 
\[
\widehat{\sigma}(A) \subseteq \widetilde{\sigma}(A)
\] for all $A$. 
\ere

\section{Faithfulness of extended support induced by a surjective map}
\label{sect-surj}

In this section, we assume as usual that $\bK$ is a big monoidal triangulated category. We also assume throughout the section that $\bK^c$ is $\Spc$-Noetherian. We will prove that in this case support data on $\bK^c$ induced by surjective maps admit faithful extensions. 

We begin with a definition.
\bde{induced-supp}
Let $\rho: \Spc \bK^c \to X$ be a surjective continuous map of topological spaces. We say that the map
\[
\sigma(A):=\overline{\rho(\suppB(A))}
\]
is the {\it{support induced by $\rho$.}}
\ede

Note that any comparative support is induced by a surjective map, since it is induced by the comparison map $\rho$ by Lemma \ref{comparative-props}, parts (1) and (2); the main difference between the assumptions in this section and those previously in Section \ref{sect-extcomp} is that we have dropped the assumption that $\sigma$ is tensorial, but have added the assumption that $\bK^c$ is $\Spc$-Noetherian. The purpose is for eventual application to the central cohomological support in the subsequent section (Section \ref{sect-centralcoh}); for a finite tensor category $\bC$, both the questions of Noetherianity of $\Spc \ul{\bC}$ and the tensoriality of the central support are open.

\ble{ind-supp-is-supp}
Let $(X,\sigma)$ be the support induced by a surjective continuous map $\rho: \Spc \bK^c \to X$ on a small $\Spc$-Noetherian monoidal triangulated category $\bK^c$. Then $(X,\sigma)$ is a faithful and realizing support datum. 
\ele

\begin{proof}
    It is straightforward that $(X,\sigma)$ is a support datum. Faithfulness follows from the fact that $\suppB$ is faithful (\exref{balmer-supp}). Suppose $V \subseteq X$ is a closed subset. Then $\rho^{-1}(V)$ is a closed subset of $\Spc \bK^c$, and so there exists $A \in \bK^c$ with $\suppB(A) = \rho^{-1}(V)$. Surjectivity of $\rho$ then implies that $\sigma(A)=V$. 
\end{proof}

\bre{non-tensorial-surj}
It is clear that a support induced by a surjective map need not be tensorial. For instance, if $\Spc \bK^c$ is the topological space consisting of two distinct closed points, $X$ is the one-point space, and $\rho$ is the unique continuous map $\Spc \bK^c \to X$. Then the support induced by $\rho$ is not tensorial: if $x$ is the unique point of $X$, and if $A$ and $C$ are two objects of $\bK^c$ with disjoint nonempty supports (which exist, by Theorem \ref{nvy-restated}), then we have $\sigma(A)=\{x\}=\sigma(C)$, but also have 
\[
\bigcup_{B \in \bK^c} \suppB(A \otimes B \otimes C) = \suppB(A) \cap \suppB(C) = \varnothing,
\]
hence $A \otimes B \otimes C \cong 0$ for all objects $B$. But then
\[
\bigcup_{B\in \bK^c} \sigma(A \otimes B \otimes C) = \varnothing \subsetneq \sigma(A) \cap \sigma(B)=\{x\}.
\]
\ere

We now proceed in computing the extended Balmer supports of the Rickard idempotents associated to points of $X$.

\begin{proposition}
    \label{supp-gammav}
Let $(X,\sigma)$ be the support induced by a surjective map $\rho$ on a small $\Spc$-Noetherian monoidal triangulated category $\bK^c$, and let $V \subseteq X$ be a specialization-closed subset. We have
\[\SuppB(\itGamm_V \unit)=\rho^{-1}(V).\] 
\end{proposition}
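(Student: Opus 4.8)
The plan is to identify $\SuppB(\itGamm_V\unit)$ with $\rho^{-1}(V)$ by a two-sided containment argument, mirroring \leref{rickard-idem-supp} but using the surjective map $\rho$ in place of the identity. Write $V = \bigcup_{i} \overline{\{x_i\}}$ as a union of irreducible closed subsets (possible since $X$ is the continuous image of a Noetherian space, hence Noetherian, so $V$ is specialization-closed equals a finite union of closures of points; in fact we only need that $V$ is a union of closed irreducible pieces). The key observation is that $\Theta(V)$, the thick ideal $\{A \in \bK^c : \sigma(A) \subseteq V\}$, can be described Balmer-theoretically: since $\sigma(A) = \overline{\rho(\suppB(A))}$, the condition $\sigma(A) \subseteq V$ is equivalent to $\rho(\suppB(A)) \subseteq V$, i.e. $\suppB(A) \subseteq \rho^{-1}(V)$. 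Thus $\Theta(V) = \{A : \suppB(A) \subseteq \rho^{-1}(V)\} = \Theta_{\suppB}(\rho^{-1}(V))$, the thick ideal corresponding under \Cref{class-ideals} to the Thomason-closed subset $\rho^{-1}(V)$ of the Noetherian space $\Spc\bK^c$ (here $\rho^{-1}(V)$ is closed, hence Thomason-closed since $\Spc\bK^c$ is Noetherian). Consequently $\Phi_{\suppB}(\Theta(V)) = \rho^{-1}(V)$.

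Now apply \leref{rickard-idem-supp} directly: since $\bK^c$ is $\Spc$-Noetherian and $\Theta(V)$ is a thick ideal of $\bK^c$, that lemma gives $\SuppB(\itGamm_{\Theta(V)}\unit) = \Phi_{\suppB}(\Theta(V))$. By \notaref{notation-gamma-closed-set}, $\itGamm_V\unit = \itGamm_{\Theta(V)}\unit$, so combining with the previous paragraph yields
\[
\SuppB(\itGamm_V\unit) = \Phi_{\suppB}(\Theta(V)) = \rho^{-1}(V),
\]
as desired.

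The main obstacle, such as it is, is the verification that $\Theta(V) = \Theta_{\suppB}(\rho^{-1}(V))$ and hence $\Phi_{\suppB}(\Theta(V)) = \rho^{-1}(V)$; this requires being careful that $\sigma(A) \subseteq V$ is genuinely equivalent to $\suppB(A) \subseteq \rho^{-1}(V)$ (the forward direction uses that $\rho(\suppB(A)) \subseteq \overline{\rho(\suppB(A))} = \sigma(A) \subseteq V$, and the reverse uses that $V$ is closed, so $\suppB(A) \subseteq \rho^{-1}(V)$ forces $\overline{\rho(\suppB(A))} \subseteq V$), together with the fact that $\rho^{-1}(V)$ is closed and therefore realized as $\suppB(A)$ for some compact $A$ by \Cref{class-ideals}, which then implies $\Phi_{\suppB}(\Theta_{\suppB}(\rho^{-1}(V))) = \rho^{-1}(V)$. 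Everything else is a direct appeal to \leref{rickard-idem-supp} and the notational identification in \notaref{notation-gamma-closed-set}; no new idempotent manipulations are needed beyond those already established.
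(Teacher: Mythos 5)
Your proof takes the same route as the paper's: reduce to $\Phi_{\suppB}(\Theta(V))$ via \leref{rickard-idem-supp}, identify $\Theta(V)$ with $\Theta_{\suppB}(\rho^{-1}(V))$ by unwinding the definition of $\sigma$, and finish by the classification theorem (realization of $\suppB$). The structure and all the essential moves match.

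However, two of your justifications invoke closedness that you do not actually have. The hypothesis is that $V$ is \emph{specialization-closed}, not closed, and then $\rho^{-1}(V)$ is likewise only specialization-closed. First, you justify the reverse implication $\rho(\suppB(A)) \subseteq V \Rightarrow \overline{\rho(\suppB(A))} \subseteq V$ by saying ``$V$ is closed,'' which is not granted. The correct argument uses that $\suppB(A)$ is closed in the Noetherian space $\Spc \bK^c$, hence a finite union of irreducible closed sets with generic points $\bP_1, \dots, \bP_n$; continuity of $\rho$ gives $\overline{\rho(\suppB(A))} = \bigcup_i \overline{\{\rho(\bP_i)\}}$, and since each $\rho(\bP_i) \in V$ and $V$ is specialization-closed, this union lies in $V$. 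Second, you justify $\Phi_{\suppB}(\Theta_{\suppB}(\rho^{-1}(V))) = \rho^{-1}(V)$ by claiming $\rho^{-1}(V)$ is closed and ``therefore realized as $\suppB(A)$ for some compact $A$.'' Again, $\rho^{-1}(V)$ is only specialization-closed (hence Thomason-closed, since $\Spc \bK^c$ is Noetherian), and a specialization-closed set need not equal a single object's support. The identity $\Phi_{\suppB} \circ \Theta_{\suppB} = \mathrm{id}$ on Thomason-closed sets still holds directly by \Cref{class-ideals}, but not via single-object realization. (Also, the opening sentence asserting $V$ is a finite union of closures of points is false for general specialization-closed $V$, but that decomposition is never used, so it's harmless.) Once these justifications are repaired, the proof is correct and coincides with the paper's.
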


\begin{proof}
    Indeed, by \leref{rickard-idem-supp}, 
\begin{align*}
    \SuppB(\itGamm_{V} \unit) &= \Phi_{\suppB} (\Theta(V))\\
    &= \bigcup_{\{A \in \bK^c :\sigma(A) \subseteq V\}} \suppB(A)\\
    &= \bigcup_{\{A \in \bK^c :\overline{\rho(\suppB(A))} \subseteq V\}} \suppB(A)\\
    &=\bigcup_{\{A \in \bK^c :\rho(\suppB(A)) \subseteq V\}} \suppB(A)\\
    &=\bigcup_{\{A \in \bK^c :\suppB(A) \subseteq \rho^{-1}(V)\}} \suppB(A)\\
    &= \rho^{-1}(V).
\end{align*}
The last line follows since Balmer support satisfies realization, using the Noetherianity assumption. 

\end{proof}

\begin{corollary}
\label{supp-gammax}
Let $(X,\sigma)$ be the support induced by a surjective map $\rho$ on a small $\Spc$-Noetherian monoidal triangulated category $\bK^c$, and let $x \in X$. We have
\[\SuppB(\itGamm_x \unit)=\rho^{-1}(x).\]
\end{corollary}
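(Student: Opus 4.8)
The plan is to deduce this directly from Proposition \ref{supp-gammav} together with the description of $\itGamm_x \unit$ in terms of $\itGamm_{\mc{V}(x)}\unit$ and $L_{\mc{Z}(x)}\unit$, and the tensor product property for the extended Balmer support of Rickard idempotents (\leref{tensor-idem-intersect}). Recall that by definition $\itGamm_x \unit = \itGamm_{\mc{V}(x)}\unit \otimes L_{\mc{Z}(x)}\unit$, where $\mc{V}(x)=\overline{\{x\}}$ and $\mc{Z}(x)=\{y\in X : x\notin\overline{\{y\}}\}$ are both specialization-closed subsets of $X$. So the first step is to compute $\SuppB$ of each of these two idempotents using Proposition \ref{supp-gammav}: we have $\SuppB(\itGamm_{\mc{V}(x)}\unit)=\rho^{-1}(\mc{V}(x))$, and (using \leref{rickard-idem-supp}, which gives $\SuppB(L_{\bI}\unit)=\Spc\bK^c\setminus\Phi_{\suppB}(\bI)$) we get $\SuppB(L_{\mc{Z}(x)}\unit)=\Spc\bK^c\setminus\rho^{-1}(\mc{Z}(x))=\rho^{-1}(X\setminus\mc{Z}(x))$.

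The second step is to combine these via \leref{tensor-idem-intersect}, which tells us that for any object $A\in\bK$ and any thick ideal $\bI$, $\SuppB(A\otimes\itGamm_{\bI}\unit)=\SuppB(A)\cap\SuppB(\itGamm_{\bI}\unit)$ and similarly for $L_{\bI}\unit$. Applying this twice (first with $A=L_{\mc{Z}(x)}\unit$ and $\bI=\Theta(\mc{V}(x))$, or in whichever order is convenient), we obtain
\[
\SuppB(\itGamm_x\unit) = \SuppB(\itGamm_{\mc{V}(x)}\unit)\cap\SuppB(L_{\mc{Z}(x)}\unit) = \rho^{-1}(\mc{V}(x))\cap\rho^{-1}(X\setminus\mc{Z}(x)) = \rho^{-1}\big(\mc{V}(x)\setminus\mc{Z}(x)\big).
\]
Finally, since $X$ is $T_0$ (as assumed throughout for all support data), we have $\mc{V}(x)\setminus\mc{Z}(x)=\mc{V}(x)\setminus(\mc{V}(x)\cap\mc{Z}(x))=\{x\}$, exactly as noted in the discussion preceding \deref{ext-support}. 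Hence $\SuppB(\itGamm_x\unit)=\rho^{-1}(\{x\})=\rho^{-1}(x)$, completing the proof.

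I do not anticipate a genuine obstacle here: the corollary is a bookkeeping combination of results already established. The one point requiring a small amount of care is making sure \leref{tensor-idem-intersect} applies in the form needed — it is stated for $A\otimes\itGamm_{\bI}\unit$ and $A\otimes L_{\bI}\unit$ with $A\in\bK$ and $\bI$ a thick ideal of $\bK^c$, and here one of the factors is itself an $L$-idempotent rather than a compact object, which is fine since the lemma allows arbitrary $A\in\bK$. The other minor point is matching up $\mc{Z}(x)$, which is specialization-closed but need not be closed; Proposition \ref{supp-gammav} and \leref{rickard-idem-supp} are both stated for specialization-closed subsets, so there is no issue, and in any case $X$ is Noetherian so every specialization-closed set is Thomason.
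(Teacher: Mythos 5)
Your proof is correct and follows the same decomposition-and-intersect strategy as the paper: write $\itGamm_x\unit = \itGamm_{\mc V(x)}\unit \otimes L_{\mc Z(x)}\unit$, compute each factor's extended Balmer support via \prref{supp-gammav} and \leref{rickard-idem-supp}, intersect using \leref{tensor-idem-intersect}, and invoke the $T_0$ hypothesis to identify $\mc V(x)\setminus \mc Z(x)=\{x\}$. If anything, your citation of \leref{tensor-idem-intersect} for the intersection step is more precise than the paper's appeal to \prref{tensor-balmer}, since (as you note) neither idempotent factor is compact, so the tensoriality statement of \prref{tensor-balmer} does not apply directly and one really needs the idempotent-specific lemma.
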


\begin{proof}
Let $\bP \in \Spc \bK^c$. Since $\itGamm_x \unit = \itGamm_{\mc{V}(x)} \unit \otimes L_{\mc{Z}(x)} \unit$, and since the extended Balmer support is tensorial by \prref{tensor-balmer}, it is clear that if $\bP$ is in $\SuppB(\itGamm_x\unit)$ then $\bP \in \SuppB(\itGamm_{\mathcal{V}(x)}\unit)$ and $\bP \not \in \SuppB(\itGamm_{\mathcal{Z}(x)} \unit)$ (recall \leref{rickard-idem-supp}). Since $\mc{V}(x) \cap (X \backslash \mc{Z}(x)) =\{x\},$ the proposition follows from Proposition \ref{supp-gammav}. 
\end{proof}

Since $\sigma$ is not tensorial, we cannot assume that $\widetilde{\sigma}$ is actually an extension of $\sigma$, at this point, since \prref{ext-supp-is-ext-supp} requires tensoriality. However, we are able to prove the following description of $\widetilde{\sigma}$, which will then allow us to describe precisely when $\widetilde{\sigma}$ agrees with $\sigma$ on compact objects.

\begin{theorem}
    \label{surj-map-faith}
    Let $(X,\sigma)$ be the support datum induced by a surjective map $\rho$ on a $\Spc$-Noetherian small monoidal triangulated category $\bK^c$. Then 
    \[
    \rho(\SuppB(A)) = \widetilde{\sigma}(A) 
    \]
    for all $A \in \bK$. As a consequence, $\widetilde{\sigma}$ is faithful. It is an extended support if and only if the map $\rho$ is closed.
\end{theorem}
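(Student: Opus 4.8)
The plan is to establish the identity $\rho(\SuppB(A)) = \widetilde{\sigma}(A)$ pointwise, with the two deductions (faithfulness, and the extension criterion) following quickly afterward. For the main identity, fix $A \in \bK$ and a point $x \in X$. By \coref{supp-gammax} we know $\SuppB(\itGamm_x \unit) = \rho^{-1}(x)$, and since the extended Balmer support is tensorial (\prref{tensor-balmer}), we have $\SuppB(A \otimes \itGamm_x \unit) = \SuppB(A) \cap \SuppB(\itGamm_x \unit) = \SuppB(A) \cap \rho^{-1}(x)$. Now $\itGamm_x A \cong A \otimes \itGamm_x \unit$ by \leref{lemma:commutes_with_Gamma_L}(2)–(3), so $x \in \widetilde{\sigma}(A)$ iff $\itGamm_x A \not\cong 0$ iff $\SuppB(A \otimes \itGamm_x \unit) \neq \varnothing$ (using faithfulness of $\SuppB$ from \coref{balmer-faith}) iff $\SuppB(A) \cap \rho^{-1}(x) \neq \varnothing$ iff $x \in \rho(\SuppB(A))$. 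This chain of equivalences gives exactly $\widetilde{\sigma}(A) = \rho(\SuppB(A))$.

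Faithfulness of $\widetilde{\sigma}$ is then immediate: if $\widetilde{\sigma}(A) = \varnothing$, then $\rho(\SuppB(A)) = \varnothing$, hence $\SuppB(A) = \varnothing$ (as $\rho$ is a function), hence $A \cong 0$ by \coref{balmer-faith}.

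For the last assertion, we must show $\widetilde{\sigma}$ restricts to $\sigma$ on $\bK^c$ precisely when $\rho$ is closed. If $\rho$ is closed, then for $A \in \bK^c$ we have $\widetilde{\sigma}(A) = \rho(\SuppB(A)) = \rho(\suppB(A)) = \overline{\rho(\suppB(A))} = \sigma(A)$, where the third equality uses that $\rho(\suppB(A))$ is already closed; conditions (2)–(5) of \deref{ext-arb-supp} hold automatically since $\itGamm_x$ and $L_x$ are coproduct-preserving monoidal triangulated functors, so $\widetilde{\sigma}$ is an extension. Conversely, if $\widetilde{\sigma}$ is an extension, then for every $A \in \bK^c$ the set $\rho(\suppB(A)) = \widetilde{\sigma}(A) = \sigma(A) = \overline{\rho(\suppB(A))}$ is closed; since $\bK^c$ is $\Spc$-Noetherian, \Cref{class-ideals} tells us every closed subset of $\Spc \bK^c$ is of the form $\suppB(A)$ for some $A \in \bK^c$ (every closed set is Thomason-closed as $\Spc \bK^c$ is Noetherian), so $\rho$ maps every closed set to a closed set, i.e. $\rho$ is closed.

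\textbf{Main obstacle.} The one point requiring care is the passage $\itGamm_x A \not\cong 0 \iff \SuppB(A \otimes \itGamm_x \unit) \neq \varnothing$: this needs the faithfulness of the \emph{extended} Balmer support on all of $\bK$, which is exactly \coref{balmer-faith} (valid here since $\bK^c$ is assumed $\Spc$-Noetherian), together with the identification $\itGamm_x A \cong A \otimes \itGamm_x \unit$. Everything else is a bookkeeping assembly of results already in hand, and in particular the argument does \emph{not} require tensoriality of $\sigma$ itself, which is the whole point of working with $\widetilde{\sigma}$ defined directly via Rickard idempotents rather than via \prref{ext-supp-is-ext-supp}.
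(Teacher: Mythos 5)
Your proof is correct and follows essentially the same approach as the paper: both establish $\widetilde{\sigma}(A)=\rho(\SuppB(A))$ via \Cref{supp-gammax} together with the tensor-product and faithfulness properties of $\SuppB$ (\Cref{tensor-idem-intersect}, \Cref{balmer-faith}), then deduce faithfulness and the closedness criterion directly. The only cosmetic differences are that you present the main identity as a single chain of equivalences rather than as two containments, and you explicitly note that conditions (2)--(5) of \deref{ext-arb-supp} are automatic, which the paper leaves implicit; when citing tensoriality for $\SuppB(A\otimes\itGamm_x\unit)=\SuppB(A)\cap\SuppB(\itGamm_x\unit)$ the precise reference is \Cref{tensor-idem-intersect} rather than \prref{tensor-balmer}, but the paper makes the same informal attribution.
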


\begin{proof}
    For the containment $\subseteq$, let $x = \rho(\bP)$ for some $\bP \in \SuppB(A).$ Then $\bP \in \SuppB(\itGamm_x \unit)$ by \Cref{supp-gammax}. Since $\SuppB$ is tensorial, this implies $\bP \in \SuppB(\itGamm_x\unit \otimes A)$. This implies that $\itGamm_x\unit \otimes A \not \cong 0$, and hence $x \in \widetilde{\sigma}(A)$. 

    For the containment $\supseteq$, if $x \in \widetilde{\sigma}(A)$, then we have $\itGamm_x \unit\otimes A \not \cong 0$. Since $\SuppB$ is faithful by \coref{balmer-faith}, $\SuppB(\itGamm_x \unit\otimes A) = \SuppB(\itGamm_x\unit) \cap \SuppB(A) = \rho^{-1}(x) \cap \SuppB(A)$ is nonempty. In particular, there exists $\bP$ in this intersection. But then $x=\rho(\bP)$, and this $\bP$ is in $\SuppB(A)$, so $x \in \rho(\SuppB(A))$.

    If $A$ is compact, we have now shown that 
    \begin{align*}
        \widetilde{\sigma}(A) &= \rho(\SuppB(A))\\
        &= \rho(\suppB(A)).
    \end{align*}Clearly, faithfulness of $\widetilde{\sigma}$ follows then from the faithfulness of $\SuppB$. 

    If $\rho$ is closed, we have
    \[
    \widetilde{\sigma}(A)= \rho(\SuppB(A)) = \rho(\suppB(A)) = \sigma(A)
    \]
    for all $A$ compact, so $\widetilde{\sigma}$ extends $\sigma$. On the other hand, if $\rho$ is not closed, then there exists some closed set $\suppB(A)$ with 
    \[
    \widetilde{\sigma}(A) = \rho(\SuppB(A)) = \rho(\suppB(A)) \subsetneq \overline{\rho(\suppB(A))} = \sigma(A)
    \]
    for some compact $A$, using the fact that $\SuppB$ extends $\suppB$ and that $\suppB$ satisfies realization, since $\Spc \bK^c$ is Noetherian (\exref{balmer-supp}).
\end{proof}

\section{Applications to stable categories of finite tensor categories}
\label{sect-centralcoh}

In this section, let $\bC$ be a finite tensor category as in \cite{EGNO2015}. That is, $\bC$ is an abelian $\kk$-linear rigid monoidal category for some algebraically-closed field $\kk$, each space of morphisms between objects of $\bC$ is finite-dimensional, $\bC$ has finitely many simple objects, all objects of $\bC$ have finite length, and the unit object $\unit$ is simple. In this case, the stable category $\ul{\bC}$ of $\bC$ is defined as having the same objects as $\bC$, and morphism spaces
\[
\Hom_{\ul{\bC}}(A,B) := \Hom_{\bC}(A,B) / \PHom_{\bC}(A,B),
\]
where $\PHom_{\bC}(A,B)$ denotes the collection of morphisms $A \to B$ in $\bC$ which factor through a projective object. The stable category $\ul{\bC}$ is monoidal triangulated \cite{Happel1988}. In fact, $\ul{\bC}$ is a small monoidal triangulated category: it appears as the compact part of the rigidly-compactly-generated monoidal triangulated category $\ul{\Ind(\bC)}$ \cite[Appendix A]{NVY3}. We will denote
\[
\bK:=\ul{\Ind(\bC)}, \;\; \bK^c :=\ul{\bC}.
\]
Since $\bC$ has finitely many simple objects, $\ul{\bC}$ is finitely-generated: the generator is the image of the direct sum of the simples.

\bex{hopf}
One of the motivating examples of finite tensor categories is the category of finite-dimensional representations of a finite-dimensional Hopf algebra $H$. In this case, the category $\Ind(\bC)$ is the category of all representations of $H$.
\eex

Recall that the {\it{cohomology ring}} $R^\bullet$ of $\bC$ is the $\kk$-algebra
\[
R^\bullet:=\bigoplus_{i \geq 0} \Ext^i_{\bC}(\unit,\unit)
\]
under the Yoneda product. The cohomology can also be described via the triangulated structure of $\bK^c$ (see e.g. \cite[Proposition 2.6.2]{CTVZ2003}): 
\[
R^\bullet \cong \bigoplus_{i \geq 0} \Hom_{\bK^c} (\unit, \Sigma^i \unit).
\]
The {\it{categorical center}} $C^\bullet \subseteq R^\bullet$ was introduced in \cite{NVY3}, and is defined as the subalgebra of $R^\bullet$ generated by homogeneous elements $g: \unit \to \Sigma^i \unit$ such that the diagrams
\[
\begin{tikzcd}
\unit \otimes A \arrow[d, "g \otimes \id_A"] \arrow[r, "\cong"] & A          & A \otimes \unit \arrow[l, "\cong"] \arrow[d, "\id_A \otimes g"] \\
\Sigma^i \unit \otimes A \arrow[r, "\cong"]                     & \Sigma^i A & A \otimes \Sigma^i \unit \arrow[l, "\cong"]  
\end{tikzcd}
\]
commute for all simple objects $A$. 

By composing shifts of morphisms as in \cite[Section 1.3]{NVY3}, there is a canonical action of the categorical center $C^\bullet$ on each algebra $\Hom^\bullet(A,A) := \bigoplus_{i \geq 0} \Hom_{\bK^c} (A, \Sigma^i A).$ We say that $\bK^c$ satisfies the {\it{weak finite generation condition}}, or (wfg) condition, if each $\Hom^\bullet(A,A)$ is finitely-generated as a $C^\bullet$ module. It would be a consequence of the Etingof--Ostrik finite-generation conjecture (see \cite{EO2004}, originally posed as a question by Friedlander and Suslin in \cite{FriedlanderSuslin1997}) that $\bK^c=\ul{\bC}$ would satisfy the (wfg) condition, for each finite tensor category $\bC$ (see \cite[Section 1.5]{NVY3}). 

We have the {\it central cohomological support} defined by
\[
\suppC(A):=\{ \mf{p} \in \Proj C^\bullet : I(A) \subseteq \mf{p}\}
\]
where $I(A)$ is the annihilator ideal of $\Hom^\bullet(A,A)$ in $C^\bullet$. The central cohomological support is a support datum (see \cite[Proposition 6.1.1]{NVY3}). As long as $\bK^c$ satisfies the (wfg) condition, the central cohomological support is faithful, see the discussion in \cite[Corollary 7.1.3]{NVY3}, which uses the result \cite[Corollary 4.2]{BPW2021} that connects dimension of a support variety with the complexity of the module. 

When $\bK^c$ satisfies the (wfg) condition, there exists a surjective continuous map $\Spc \bK^c \to \Proj C^\bullet$
given by
\[\rho(\bP):=\langle g \in C^\bullet : g \text{ homogeneous,} \cone(g) \not \in \bP\rangle
\]
for any prime ideal $\bP \in \Spc \bK^c$, see \cite[Corollary 7.1.3, Theorem 7.2.1]{NVY3}, a noncommutative analogue of Balmer's comparison map \cite{Balmer2010}. If the central support $\suppC$ is tensorial,
then we have the universal map
\[
\Proj C^\bullet \xrightarrow{\eta} \Spc \bK^c.
\]
In this case $\rho \circ \eta= \id_{\Proj C^\bullet}$ and $(\eta \circ \rho)(\bP) \subseteq \bP$ for all $\bP \in \Spc \bK^c$ by \cite[Proposition 8.1.1]{NVY3}. Whether or not $\suppC$ is tensorial in general is still an open question.

\bnota{centralcohom-nota}
    The extension of the central cohomological support $\suppC$ will be denoted $\SuppC$.
\end{notation}

Before stating our main theorem giving conditions under which the central cohomological support satisfies the necessary assumptions given in the previous sections for there to exist a faithful extension, we prove some preliminary results.

\bpr{var-tens-power}
Let $\bK^c=\ul{\bC}$ be the stable category of a finite tensor category $\bC$ satisfying weak finite generation, and suppose that the categorical center $C^\bullet$ is finitely-generated. Let $A \in \bK^c$ and let $G$ be the image of the direct sum of the simple objects of $\bC$ in $\bK^c.$ Then $\suppC(A \otimes G \otimes A) = \suppC(A).$
\epr

\begin{proof}
We have the containment 
\[
\suppC(A \otimes G \otimes A) \subseteq \suppC(A)
\]
by \cite[Proposition 6.1.1(e)]{NVY3}. Suppose that this containment is strict. Then there exists some homogeneous maximal ideal $\mf{m} \in \suppC(A)\subseteq \Proj C^\bullet$ which is not in $\suppC(A \otimes G \otimes A)$ since in schemes of locally finite type over a field $\kk$, closed subsets are determined by their closed points (see e.g. \cite[Proposition 3.35]{GW2010}). Take a collection of generators $g_1,..., g_n$ of $\mf{m}$. Set $B= \cone(g_1) \otimes... \otimes \cone(g_n)$. We have $\suppC(B)=\{ \mf{m}\}=\suppC(A \otimes B)$ by \cite[Proposition 7.1.1(a)]{NVY3}. 

Now note that for any objects $D$ and $E$ in $\bK^c$, we have a distinguished triangle
\[
D \otimes \unit \otimes E\xrightarrow{\id_D \otimes g_i \otimes \id_E}  D \otimes \unit \otimes E \to D \otimes \cone(g_i) \otimes E \to \Sigma(D \otimes \unit \otimes E).
\]
It follows that $\suppC(D \otimes \cone(g_i) \otimes E) \subseteq \suppC(D \otimes E)$. Since $B$ is a tensor product of all of the $\cone(g_i)$'s, we also have $\suppC(D \otimes B \otimes E)\subseteq \suppC(D \otimes E),$ for any objects $D$ and $E$. 

By the above discussion and  \cite[Proposition 7.1.1(d)]{NVY3},
we have
\begin{align*}
    \suppC( (A \otimes B) \otimes G \otimes (A \otimes B)) & \subseteq \suppC(A \otimes G \otimes A \otimes B)\\ 
    &= \suppC(A \otimes G \otimes A) \cap \suppC(B)\\
    &= \varnothing.
\end{align*}
It follows that $(A \otimes B) \otimes G \otimes (A \otimes B) \cong 0$ by the faithfulness of the central cohomological support, and hence $A \otimes B \cong 0$ by \cite[Proposition 4.1.1]{NVY2}. But this is a contradiction, since $A \otimes B$ has a nontrivial support variety. 
\end{proof}

Before proving that the comparison map $\rho$ is closed (allowing us to apply the general results of Section \ref{sect-surj}), we need one more technical result.

\bpr{coneg-product-middle}
Let $\bK^c=\ul{\bC}$ be the stable category of a finite tensor category $\bC$ satisfying weak finite generation. Let $G$ be the image of the direct sum of the simple objects of $\bC$ in $\bK^c$. Let $A\in \bK^c$ and $\mf{p} \in \suppC(A)$. Then $\mf{p}$ is in
\[
\suppC(A_1 \otimes G \otimes A_2 \otimes G \otimes\cdots \otimes A_n),
\]
where each $A_i$ is either $A$ or $\cone(g)$ for some $g \in \mf{p}$.
\epr

\begin{proof}
    We induct on $n$; clearly, if $n=1$, then the result holds, recalling again that $\suppC(\cone(g))=Z(g)$ for any $g \in C^{\bullet}$. Set $B=A_1 \otimes G \otimes A_2 \otimes G \otimes \cdots \otimes A_n$ as in the statement of the proposition. If all the objects $A_i$ are equal to $A$, then the result follows from \prref{var-tens-power}. Furthermore, if either $A_1$ or $A_n$ is equal to $\cone(g)$, for some $g \in \mf{p}$, then the claim follows from the inductive hypothesis and \cite[Proposition 7.1.1(d)]{NVY3}. Hence we can assume that 
    \[
    B=A' \otimes G \otimes C \otimes G \otimes A'',
    \]
    where $A'$ and $A''$ are both objects of the form $A \otimes G \otimes \cdots \otimes G \otimes A$, and $C$ has the form $\cone(g_1) \otimes G \otimes A_i \otimes G \cdots \otimes A_j \otimes \cone(g_2).$ Suppose without loss of generality that $A'$ has more tensorands than $A''$. By tensoring $B$ on the right, we can form the object
    \[
    D=(A' \otimes G \otimes C) \otimes G \otimes (A' \otimes G \otimes C).  
    \]
    But now note that by \cite[Proposition 4.1.1]{NVY2}, $D$ generates the same thick ideal as $A' \otimes G \otimes C$. Since $B$ is in the thick ideal generated $A' \otimes G \otimes C$, and since $D$ is in the thick ideal generated by $B$, we have
    \[
    \langle D \rangle = \langle B \rangle = \langle A' \otimes G \otimes C \rangle. 
    \]
    By the properties of support, $\suppC(A' \otimes G \otimes C) \subseteq \suppC(B)$. But $\mf{p} \in \suppC(A' \otimes G \otimes C),$ by the inductive hypothesis.
\end{proof}

\bpr{central-supp-rho}
Let $\bK^c=\ul{\bC}$ be the stable category of a finite tensor category $\bC$ satisfying weak finite generation, and suppose that the categorical center $C^\bullet$ is finitely-generated. We have $\rho(\suppB(A)) = \suppC(A)$ for all $A \in \bK^c$. 
\epr

\begin{proof}
    We know by \cite[Corollary 6.2.5(b)]{NVY3} that $\rho(\suppB(A)) \subseteq \suppC(A)$. For the other direction, suppose $\mf{p} \in \suppC(A)$. We will construct a prime ideal $\bP$ such that $\rho(\bP) = \mf{p}$ and $\bP \in \suppB(A)$. Consider the thick ideal
    \[
    \bI := \langle \cone(g) : g \not \in \mf{p} \rangle,
    \]
    and the collection of objects
    \[
    \bM := \{ A_1\otimes G \otimes A_2 \cdots \otimes G \otimes A_n : A_i \in \bS \},
    \]
    where $G$ is the sum of simple objects and $\bS$ is the set of objects which are either $A$ or $\cone(g)$ for some $g \in \mf{p}$. So long as $\bI$ and $\bM$ are disjoint, we may produce a prime ideal containing $\bI$ which intersects $\bM$ trivially, by \cite[Lemma A.1.1]{NVY4}, since $\bM$ has the property that given any two objects $B$ and $D$, there exists an object $C$ such that $B \otimes C \otimes D \in \bM$ (of course, we have constructed it so that $C$ can be chosen as $G$). Indeed, $\bI$ and $\bM$ must be disjoint, since by \prref{coneg-product-middle} we have $\suppC(\cone(g)) = Z(g)$ for any $g \in C^\bullet$, hence 
    \[
    \suppC(B) \subseteq \bigcup_{g \not \in \mf{p}}Z (g)
    \]
    for any $B \in \bI$. Therefore, $\mf{p} \not \in \suppC(B)$ for any $B \in \bI$. Since $\mf{p} \in \suppC(C)$ for all $C \in \bM$ by \prref{coneg-product-middle}, it follows that $\bI \cap \bM$ is empty. Thus, by \cite[Lemma A.1.1]{NVY4}, we can produce a prime ideal $\bP$ containing $\bI$ and disjoint from $\bM$. Since $\bP$ contains $\bI$ and is disjoint from $\bM$, we have
    \begin{align*}
        \rho(\bP) &= \langle g \in C^\bullet : g \text{ homogeneous}, \cone(g) \not \in \bP \rangle\\
        & = \langle g \in C^\bullet : g \text{ homogeneous }, g \in \mf{p}\rangle\\
        &=\mf{p}.
    \end{align*}
    Additionally, since $\bP$ is disjoint from $\bM$, it in particular does not contain $A$, that is, $\bP \in \suppB(A)$, and we are done.

\end{proof}

Immediately from \prref{central-supp-rho}, we deduce:

\begin{corollary}
    \label{rho-closed-fintenscat}
Let $\bK^c=\ul{\bC}$ be the stable category of a finite tensor category $\bC$ satisfying weak finite generation, and suppose that the categorical center $C^\bullet$ is finitely-generated and that $\Spc \bK^c$ is Noetherian. Then the map $\rho$ is a closed map.
\end{corollary}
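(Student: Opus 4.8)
The strategy is to deduce this directly from \prref{central-supp-rho}, which already tells us that $\rho$ sends each ``basic'' closed set $\suppB(A)$, $A\in\bK^c$, to the set $\suppC(A)\subseteq\Proj C^\bullet$ --- and $\suppC(A)$ is closed, being by definition the vanishing locus of the annihilator ideal $I(A)$. So the only thing that needs to be checked is that, under the Noetherianity hypothesis, every closed subset of $\Spc\bK^c$ is built out of such basic supports.

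Concretely, I would argue as follows. Let $V\subseteq\Spc\bK^c$ be closed. Since $\Spc\bK^c$ is Noetherian (and sober, as is the Balmer spectrum of any small monoidal triangulated category), $V$ decomposes as a finite union of irreducible closed subsets $\overline{\{x_1\}},\dots,\overline{\{x_n\}}$. Fix one component $\overline{\{x\}}$. It is closed, hence Thomason-closed by Noetherianity, so \Cref{class-ideals} --- which applies here because $\bK^c=\ul{\bC}$ is finitely-generated --- yields
\[
\overline{\{x\}} \;=\; \Phi_{\suppB}\bigl(\Theta_{\suppB}(\overline{\{x\}})\bigr) \;=\; \bigcup_{\,\suppB(B)\subseteq\overline{\{x\}}\,}\suppB(B).
\]
Choosing $B$ in this union with $x\in\suppB(B)$, the inclusion $\suppB(B)\subseteq\overline{\{x\}}$ together with the fact that $\suppB(B)$ is closed forces $\suppB(B)=\overline{\{x\}}$. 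Carrying this out for each irreducible component and taking the direct sum of the resulting objects produces a single $A\in\bK^c$ with $V=\suppB(A)$ (using axiom~(2) of a support datum).

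It then follows from \prref{central-supp-rho} that $\rho(V)=\rho(\suppB(A))=\suppC(A)$, which is a closed subset of $\Proj C^\bullet$; since $V$ was an arbitrary closed set, $\rho$ is a closed map, as claimed.

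I do not expect a genuine obstacle here: the substance of the corollary is entirely contained in \prref{central-supp-rho}, and what remains is the standard reduction of an arbitrary closed subset of a Noetherian spectral space to the support of a single compact object by way of the ideal classification theorem. The hypotheses that are actually used beyond \prref{central-supp-rho} are just the Noetherianity of $\Spc\bK^c$ (which simultaneously gives the decomposition into finitely many irreducible components and the applicability of \Cref{class-ideals}) and the fact, recorded earlier, that $\ul{\bC}$ is finitely-generated.
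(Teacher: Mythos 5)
Your proof is correct and takes essentially the same route as the paper. The paper simply says ``Immediately from \prref{central-supp-rho}, we deduce'' and omits the argument, implicitly relying on the fact (recorded in \exref{balmer-supp}) that $\suppB$ is realizing when $\Spc\bK^c$ is Noetherian, so that every closed subset is $\suppB(A)$ for some $A\in\bK^c$ and \prref{central-supp-rho} then gives $\rho(\suppB(A))=\suppC(A)$, which is closed. You have merely unfolded the realization step via the irreducible-component decomposition; this is a standard way to justify that realization property and is the same idea the paper leaves implicit.
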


We have now completed the work necessary to state our main results for central support on the stable category of a finite tensor category.

\begin{theorem}
    \label{stablecat-summary}
    Let $\bK^c=\ul{\bC}$ be the stable category of a finite tensor category $\bC$ satisfying the weak finite-generation condition. Then if $\bK^c$ satisfies either of the following two conditions, the central support $\suppC$ admits a faithful extension:
    \begin{enumerate}
        \item $\Proj C^\bullet$ is Noetherian and $\suppC$ is tensorial;
        \item $C^\bullet$ is finitely-generated and $\Spc \bK^c$ is Noetherian.
    \end{enumerate}
\end{theorem}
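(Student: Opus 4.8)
The plan is to deduce each of the two cases from one of the general faithfulness theorems of Sections~\ref{sect-extcomp} and~\ref{sect-surj}, after first recognizing $\suppC$ either as a comparative support or as a support induced by a surjection. Throughout, we use that $\bC$ has only finitely many simple objects, so that $\bK^c=\ul{\bC}$ is finitely generated and hence satisfies condition~(\ref{kc-condition}).

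\textbf{Case (1).} Here I would verify that $\suppC$ is comparative in the sense of \deref{comparative}, with comparison map the surjection $\rho\colon \Spc\ul{\bC}\to\Proj C^\bullet$. When $\suppC$ is tensorial, \Cref{Balmer-univ} yields the universal map $\eta\colon \Proj C^\bullet\to\Spc\ul{\bC}$, and by \cite[Proposition~8.1.1]{NVY3} one has $\rho\circ\eta=\id_{\Proj C^\bullet}$ and $(\eta\circ\rho)(\bP)\subseteq\bP$ for all $\bP$, which are precisely conditions~(1) and~(2) for $\rho$ to be a comparison map. Since $\Proj C^\bullet$ is sober (being the underlying space of a scheme) and Noetherian by hypothesis, it is a Zariski space; then \Cref{comparative-props}(5) shows $\suppC$ is realizing, so \prref{ext-supp-is-ext-supp} guarantees $\SuppC$ is an extension of $\suppC$, and \Cref{comp-faith} shows it is faithful.

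\textbf{Case (2).} Here I would show $\suppC$ is the support induced by $\rho$ in the sense of \deref{induced-supp}. By \prref{central-supp-rho} we have $\suppC(A)=\rho(\suppB(A))$ for every $A\in\ul{\bC}$, and by \Cref{rho-closed-fintenscat} the map $\rho$ is closed, so $\rho(\suppB(A))$ is already closed and thus equals $\overline{\rho(\suppB(A))}$; hence $\suppC$ is exactly the support induced by $\rho$. Since $\Spc\ul{\bC}$ is Noetherian and $\ul{\bC}$ is small, \Cref{surj-map-faith} applies and gives that $\SuppC$ is faithful and is a genuine extended support (the latter precisely because $\rho$ is closed), producing the desired faithful extension.

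The substantive work has already been carried out in the preparatory results, so what remains above is essentially bookkeeping. The main obstacle lies in case~(2), in the equality $\suppC(A)=\rho(\suppB(A))$ of \prref{central-supp-rho} and the closedness of $\rho$ from \Cref{rho-closed-fintenscat}: these rest on the tensor-power stability $\suppC(A\otimes G\otimes A)=\suppC(A)$ of \prref{var-tens-power} (where $G$ is the image of the direct sum of the simple objects), which in turn uses faithfulness of $\suppC$ under (wfg) together with rigidity of $\ul{\bC}$, as well as the prime-avoidance construction \cite[Lemma~A.1.1]{NVY4} applied to the tensor-multiplicative family $\{A\otimes G\otimes\cdots\otimes G\otimes A\}$ and Noetherianity of $\Spc\ul{\bC}$. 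In case~(1) the only delicate point is confirming that tensoriality of $\suppC$ upgrades $\rho$ to a genuine comparison map via \cite[Proposition~8.1.1]{NVY3}; granted that, faithfulness of the extension is immediate from \Cref{comp-faith}.
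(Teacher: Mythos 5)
Your proof is correct and follows essentially the same two-pronged strategy as the paper: case~(1) reduces to recognizing $\suppC$ as comparative via $\rho$ (using \cite[Proposition~8.1.1]{NVY3}) and then applying \Cref{comp-faith}, while case~(2) reduces to recognizing $\suppC$ as the support induced by the surjection $\rho$ (via \prref{central-supp-rho} and \Cref{rho-closed-fintenscat}) and then applying \Cref{surj-map-faith}. The only material difference is that you spell out slightly more bookkeeping than the paper does — explicitly verifying that $\Proj C^\bullet$ is sober, that $\suppC$ is realizing via \Cref{comparative-props}(5) so that \prref{ext-supp-is-ext-supp} applies in case~(1), and that $\rho$ closed gives a genuine extension in case~(2); the paper leaves these implicit, so your version is if anything a touch more careful.
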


\begin{proof}
(1): If $\suppC$ is tensorial and $\Proj C^\bullet$ Noetherian, then the map $\rho$ defined above satisfies the axioms of a comparison map given in \deref{comparative}, so that $\suppC$ is a comparative support and we can deduce that it has a faithful extension, by Theorem \ref{comp-faith}.

(2): We know that central support $\suppC$ is induced by the surjective map $\rho$ by \prref{central-supp-rho}. Hence if $\Spc \bK^c$ is Noetherian, the result follows from \Cref{surj-map-faith}.
\end{proof}

\bre{bik-comparison}
Benson--Iyengar--Krause construct extensions in \cite{BIK2008} for supports arising from an arbitrary graded-commutative Noetherian ring $R$ acting on a triangulated category. However, this support is based on the homogeneous prime spectrum of $R$ rather than $\Proj R$, in other words, the irrelevant ideal (if the ring is $\mathbb{N}$-graded) is allowed as a point. Other than this difference, the central cohomological support fits into their framework, and our extension is defined in the same way as theirs (indeed, their extension serves as one of the primary models for the extension in this paper). In \cite[Theorem 5.2]{BIK2008}, they prove that their extension is faithful. This does not immediately imply that the extension for the $\Proj$-based version is faithful, however, since a priori their theorem still allows for a nonzero object in $\bK$ to have support equal to the irrelevant ideal, which would translate to empty $\SuppC$-support. We may therefore view \Cref{stablecat-summary} as a strengthening of the faithfulness result of Benson--Iyengar--Krause under tensoriality or $\Spc$-Noetherianity assumptions.
\ere

\bex{quantum-complete-intersection}

Let $\ell\ge 2$ be an integer and $\kk$ be a field of characteristic either 0 or relatively prime to $\ell$. Let $q$ be a primitive $\ell$th root of unity. There is an action of the elementary abelian group $G:=(\mathbb{Z}/\ell \mathbb{Z})^{\times n}$ on the truncated polynomial ring $R:=\kk[x_1,..., x_n]/(x_1^\ell,..., x_n^\ell)$ given by letting the $i$th generator of $G$ send $x_j$ to $q^{\delta_{ij}} x_j.$ The resulting smash product $A:=R \rtimes G$ is a noncommutative, noncocommutative Hopf algebra, and is referred to as a quantum elementary abelian group; for background details, see \cite[\S2]{pevtsova2009varieties}. It can alternatively described as the Borel subalgebra of the small quantum group $u_q(\mathfrak{sl}_2^{\oplus n})$. Set $\bK \coloneqq \ul{\Mod}(A)$. Pevtsova--Witherspoon pose the question in \cite[Introduction]{pevtsova2015tensor}: do support varieties for infinite-dimensional $A$-modules satisfy a tensor product property? We proceed to use this example to illustrate the techniques developed above, providing an affirmative answer to the question of Pevtsova--Witherspoon, which gives an alternate proof of a result of Negron--Pevtsova \cite{NP2023a}.

Pevtsova and Witherspoon prove in \cite[Theorem~1.1]{pevtsova2015tensor} that the cohomological support of $\bK^c$ is tensorial, and hence we have a continuous map
\[
\eta: \Proj \cohom(A,\kk) \to \Spc \bK^c
\]
by \cite[Theorem 1.3.1]{NVY2022}. By \cite[Lemma 3.1]{pevtsova2015tensor}, simple $A$-modules tensor-commute with all objects of $\bK^c$, hence $C^\bullet = \cohom(A,\kk)$ by the definition of $C^\bullet$. The cohomology ring is finitely-generated: $\cohom(A,\kk)\cong \kk[y_1,\dots, y_n]$ is a polynomial ring (where each $\deg (y_i)=2$), see for example the straightforward computation in \cite[\S4]{pevtsova2009varieties}. The map 
\[
\rho: \Spc \bK^c \to \Proj \cohom(A,\kk)
\]
recalled above is a left inverse to $\eta$, and is closed by \prref{central-supp-rho}. By the classification of thick ideals for $\bK^c$, also due to Pevtsova--Witherspoon \cite[Theorem 1.2]{pevtsova2015tensor}, $\rho$ must be injective, and hence $\eta$ and $\rho$ are inverse homeomorphisms. Indeed, using the explict formula for $\rho$, if $\rho(\bP) = \rho(\bQ)$ for $\bP, \bQ \in \Spc \bK^c$, then it would follow that for all $g \in \cohom(A,\kk)$, we would have $\cone(g) \in \bP \Leftrightarrow \cone(g) \in \bQ$, which would imply that the supports of $\bP$ and $\bQ$ would be the same, by \cite[Proposition 7.1.1]{NVY3}, a contradiction. Therefore 
\[
\Spc \bK^c \cong \Proj \cohom(A,\kk) \cong \mathbb{P}^{n-1}.
\]

These results allow us to apply Theorem~\ref{stablecat-summary}, since $\Proj C^\bullet$ is Noetherian, $\suppC$ is tensorial, $C^\bullet$ is finitely-generated, and $\Spc \bK^c$ is Noetherian. We obtain that $\suppC$ admits a faithful extension to all of $\bK$. Furthermore, since $\suppC$ identifies with the Balmer support, by \prref{tensor-balmer} we have tensoriality of $\SuppC$: the question posed by Pevtsova--Witherspoon, asking whether there exists an extension of $\suppC$ satisfying a tensor product property, has a positive answer.

We note that this problem was first solved in \cite{NP2023a} using an alternate method. Negron--Pevtsova construct a different extended support, termed {\em{hypersurface support}}, and prove in \cite[Theorem 7.9]{NP2023a} that this hypersurface support is a faithful extended support satisfying a version of the tensor product property. However, for hypersurface support to be defined, one needs to have a smoothly integrable Hopf algebra. We emphasize that the techniques we use to recover this result above make no reference to the smooth integration; nevertheless, even though the Negron--Pevtsova is constructed in a different way from the support we provide here, by Proposition \ref{uniqueness_of_balmer_extension} we know that the two extended supports are in fact the same. By \prref{tensor-balmer} and \coref{balmer-faith} we expect existence of a faithful and tensorial extended support to be a general phenomena in monoidal triangular geometry, and not specific to smoothly integrable Hopf algebras.
\eex

\bibliography{bibl}

@article {BIK2008,
    AUTHOR = {Benson, Dave and Iyengar, Srikanth B. and Krause, Henning},
     TITLE = {Local cohomology and support for triangulated categories},
   JOURNAL = {Ann. Sci. \'{E}c. Norm. Sup\'{e}r. (4)},
  FJOURNAL = {Annales Scientifiques de l'\'{E}cole Normale Sup\'{e}rieure. Quatri\`eme
              S\'{e}rie},
    VOLUME = {41},
      YEAR = {2008},
    NUMBER = {4},
     PAGES = {573--619},
      ISSN = {0012-9593},
   MRCLASS = {18E30 (13D45 16E30 20J05)},
  MRNUMBER = {2489634},
MRREVIEWER = {Apostolos D. Beligiannis},
       DOI = {10.24033/asens.2076},
       URL = {https://doi.org/10.24033/asens.2076},
}

@book {BIK2012,
    AUTHOR = {Benson, David J. and Iyengar, Srikanth and Krause, Henning},
     TITLE = {Representations of finite groups: local cohomology and
              support},
    SERIES = {Oberwolfach Seminars},
    VOLUME = {43},
 PUBLISHER = {Birkh\"{a}user/Springer Basel AG, Basel},
      YEAR = {2012},
     PAGES = {x+105},
      ISBN = {978-3-0348-0259-8},
   MRCLASS = {20J06 (20C20)},
  MRNUMBER = {2951763},
MRREVIEWER = {Nadia P. Mazza},
       DOI = {10.1007/978-3-0348-0260-4},
       URL = {https://doi.org/10.1007/978-3-0348-0260-4},
}

@article {Stevenson2013,
    AUTHOR = {Stevenson, Greg},
     TITLE = {Support theory via actions of tensor triangulated categories},
   JOURNAL = {J. Reine Angew. Math.},
  FJOURNAL = {Journal f\"{u}r die Reine und Angewandte Mathematik. [Crelle's
              Journal]},
    VOLUME = {681},
      YEAR = {2013},
     PAGES = {219--254},
      ISSN = {0075-4102},
   MRCLASS = {18E30 (18D10)},
  MRNUMBER = {3181496},
MRREVIEWER = {Pawel Sosna},
       DOI = {10.1515/crelle-2012-0025},
       URL = {https://doi.org/10.1515/crelle-2012-0025},
}

@article {Stevenson2014b,
    AUTHOR = {Stevenson, Greg},
     TITLE = {Subcategories of singularity categories via tensor actions},
   JOURNAL = {Compos. Math.},
  FJOURNAL = {Compositio Mathematica},
    VOLUME = {150},
      YEAR = {2014},
    NUMBER = {2},
     PAGES = {229--272},
      ISSN = {0010-437X,1570-5846},
   MRCLASS = {14F05 (14B05 14M10 18E30)},
  MRNUMBER = {3177268},
MRREVIEWER = {Pawel\ Sosna},
       DOI = {10.1112/S0010437X1300746X},
       URL = {https://doi.org/10.1112/S0010437X1300746X},
}

@incollection {Stevenson2018,
    AUTHOR = {Stevenson, Greg},
     TITLE = {A tour of support theory for triangulated categories through
              tensor triangular geometry},
 BOOKTITLE = {Building bridges between algebra and topology},
    SERIES = {Adv. Courses Math. CRM Barcelona},
     PAGES = {63--101},
 PUBLISHER = {Birkh\"{a}user/Springer, Cham},
      YEAR = {2018},
   MRCLASS = {18E30 (55P60)},
  MRNUMBER = {3793858},
MRREVIEWER = {Beren Sanders},
}

@article {BF2011,
    AUTHOR = {Balmer, Paul and Favi, Giordano},
     TITLE = {Generalized tensor idempotents and the telescope conjecture},
   JOURNAL = {Proc. Lond. Math. Soc. (3)},
  FJOURNAL = {Proceedings of the London Mathematical Society. Third Series},
    VOLUME = {102},
      YEAR = {2011},
    NUMBER = {6},
     PAGES = {1161--1185},
      ISSN = {0024-6115},
   MRCLASS = {18E30 (14F05 55P60)},
  MRNUMBER = {2806103},
MRREVIEWER = {Fernando Muro},
       DOI = {10.1112/plms/pdq050},
       URL = {https://doi.org/10.1112/plms/pdq050},
}

@article {NVY2022,
    AUTHOR = {Nakano, Daniel K. and Vashaw, Kent B. and Yakimov, Milen T.},
     TITLE = {Noncommutative tensor triangular geometry},
   JOURNAL = {Amer. J. Math.},
  FJOURNAL = {American Journal of Mathematics},
    VOLUME = {144},
      YEAR = {2022},
    NUMBER = {6},
     PAGES = {1681--1724},
      ISSN = {0002-9327},
   MRCLASS = {18G80 (16T05 18M05 18N10)},
  MRNUMBER = {4521050},
       DOI = {10.1353/ajm.2022.0041},
       URL = {https://doi.org/10.1353/ajm.2022.0041},
}

@article {NVY3,
    AUTHOR = {Nakano, Daniel K. and Vashaw, Kent B. and Yakimov, Milen T.},
     TITLE = {On the spectrum and support theory of a finite tensor
              category},
   JOURNAL = {Math. Ann.},
  FJOURNAL = {Mathematische Annalen},
    VOLUME = {390},
      YEAR = {2024},
    NUMBER = {1},
     PAGES = {205--254},
      ISSN = {0025-5831,1432-1807},
   MRCLASS = {18M05 (16T05 18G65)},
  MRNUMBER = {4800913},
       DOI = {10.1007/s00208-023-02759-8},
       URL = {https://doi.org/10.1007/s00208-023-02759-8},
}

@article {NVY4,
    AUTHOR = {Nakano, Daniel K. and Vashaw, Kent B. and Yakimov, Milen T.},
     TITLE = {A {C}hinese remainder theorem and {C}arlson's theorem for monoidal triangulated categories},
   JOURNAL = {arXiv:2311.17883},
  FJOURNAL = {},
    VOLUME = {},
      YEAR = {2023},
    NUMBER = {},
     PAGES = {},
      ISSN = {},
   MRCLASS = {},
  MRNUMBER = {},
       DOI = {},
       URL = {},
}

@article {NVY5,
    AUTHOR = {Nakano, Daniel K. and Vashaw, Kent B. and Yakimov, Milen T.},
     TITLE = {The homological spectrum for monoidal triangulated categories},
   JOURNAL = {	arXiv:2506.19946},
  FJOURNAL = {},
    VOLUME = {},
      YEAR = {2025},
    NUMBER = {},
     PAGES = {},
      ISSN = {},
   MRCLASS = {},
  MRNUMBER = {},
       DOI = {},
       URL = {},
}

@article {SVW,
    AUTHOR = {Solberg, {\O}yvind and Vashaw, Kent B. and Witherspoon, Sarah},
     TITLE = {Noncommutative tensor triangular geometry: modules, bimodules, and unipotent {H}opf algebras},
   JOURNAL = {arXiv:2511.10531},
  FJOURNAL = {},
    VOLUME = {},
      YEAR = {2025},
    NUMBER = {},
     PAGES = {},
      ISSN = {},
   MRCLASS = {},
  MRNUMBER = {},
       DOI = {},
       URL = {},
}

@article {DDM,
    AUTHOR = {De Deyn, Timothy and Miller, Sam K.},
     TITLE = {Re-framing the classification of ideals in noncommutative tensor-triangular geometry},
   JOURNAL = {arXiv:2510.23767},
  FJOURNAL = {},
    VOLUME = {},
      YEAR = {2025},
    NUMBER = {},
     PAGES = {},
      ISSN = {},
   MRCLASS = {},
  MRNUMBER = {},
       DOI = {},
       URL = {},
}

@article {Rickard1997,
    AUTHOR = {Rickard, Jeremy},
     TITLE = {Idempotent modules in the stable category},
   JOURNAL = {J. London Math. Soc. (2)},
  FJOURNAL = {Journal of the London Mathematical Society. Second Series},
    VOLUME = {56},
      YEAR = {1997},
    NUMBER = {1},
     PAGES = {149--170},
      ISSN = {0024-6107},
   MRCLASS = {20J05 (20C05)},
  MRNUMBER = {1462832},
MRREVIEWER = {Amnon Neeman},
       DOI = {10.1112/S0024610797005309},
       URL = {https://doi.org/10.1112/S0024610797005309},
}

@article {BCR1996,
    AUTHOR = {Benson, D. J. and Carlson, Jon F. and Rickard, J.},
     TITLE = {Complexity and varieties for infinitely generated modules.
              {II}},
   JOURNAL = {Math. Proc. Cambridge Philos. Soc.},
  FJOURNAL = {Mathematical Proceedings of the Cambridge Philosophical
              Society},
    VOLUME = {120},
      YEAR = {1996},
    NUMBER = {4},
     PAGES = {597--615},
      ISSN = {0305-0041},
   MRCLASS = {20C05 (20J06)},
  MRNUMBER = {1401950},
MRREVIEWER = {Burkhard K\"{u}lshammer},
       DOI = {10.1017/S0305004100001584},
       URL = {https://doi.org/10.1017/S0305004100001584},
}

@article {BCR1995,
    AUTHOR = {Benson, D. J. and Carlson, Jon F. and Rickard, J.},
     TITLE = {Complexity and varieties for infinitely generated modules},
   JOURNAL = {Math. Proc. Cambridge Philos. Soc.},
  FJOURNAL = {Mathematical Proceedings of the Cambridge Philosophical
              Society},
    VOLUME = {118},
      YEAR = {1995},
    NUMBER = {2},
     PAGES = {223--243},
      ISSN = {0305-0041},
   MRCLASS = {20C05 (16S34 20J06)},
  MRNUMBER = {1341788},
MRREVIEWER = {Burkhard K\"{u}lshammer},
       DOI = {10.1017/S0305004100073618},
       URL = {https://doi.org/10.1017/S0305004100073618},
}

@article {BCR1997,
    AUTHOR = {Benson, D. J. and Carlson, Jon F. and Rickard, Jeremy},
     TITLE = {Thick subcategories of the stable module category},
   JOURNAL = {Fund. Math.},
  FJOURNAL = {Fundamenta Mathematicae},
    VOLUME = {153},
      YEAR = {1997},
    NUMBER = {1},
     PAGES = {59--80},
      ISSN = {0016-2736,1730-6329},
   MRCLASS = {20C20 (20G40)},
  MRNUMBER = {1450996},
MRREVIEWER = {Peter\ W.\ Donovan},
       DOI = {10.4064/fm-153-1-59-80},
       URL = {https://doi.org/10.4064/fm-153-1-59-80},
}

@article {Balmer2005,
    AUTHOR = {Balmer, Paul},
     TITLE = {The spectrum of prime ideals in tensor triangulated
              categories},
   JOURNAL = {J. Reine Angew. Math.},
  FJOURNAL = {Journal f\"{u}r die Reine und Angewandte Mathematik. [Crelle's
              Journal]},
    VOLUME = {588},
      YEAR = {2005},
     PAGES = {149--168},
      ISSN = {0075-4102},
   MRCLASS = {18E30 (55P99)},
  MRNUMBER = {2196732},
MRREVIEWER = {Amnon Neeman},
       DOI = {10.1515/crll.2005.2005.588.149},
       URL = {https://doi.org/10.1515/crll.2005.2005.588.149},
}

@article{stevenson2017local,
    AUTHOR = {Stevenson, Greg},
     TITLE = {The local-to-global principle for triangulated categories via
              dimension functions},
   JOURNAL = {J. Algebra},
  FJOURNAL = {Journal of Algebra},
    VOLUME = {473},
      YEAR = {2017},
     PAGES = {406--429},
      ISSN = {0021-8693,1090-266X},
   MRCLASS = {18E30 (55U35)},
  MRNUMBER = {3591157},
MRREVIEWER = {Bo\ Lu},
       DOI = {10.1016/j.jalgebra.2016.12.002},
       URL = {https://doi.org/10.1016/j.jalgebra.2016.12.002},
}

@book{tedd2017ring,
  title={Ring constructions on spectral spaces},
  author={Tedd, Christopher Francis},
  year={2017},
  publisher={The University of Manchester (United Kingdom)}
}

@article {MallickRay2023,
    AUTHOR = {Mallick, Vivek Mohan and Ray, Samarpita},
     TITLE = {Noncommutative tensor triangulated categories and coherent
              frames},
   JOURNAL = {C. R. Math. Acad. Sci. Paris},
  FJOURNAL = {Comptes Rendus Math\'{e}matique. Acad\'{e}mie des Sciences.
              Paris},
    VOLUME = {361},
      YEAR = {2023},
     PAGES = {1415--1427},
      ISSN = {1631-073X,1778-3569},
   MRCLASS = {18M05 (06D22 18F70 18G80 54B30 55P43)},
  MRNUMBER = {4683320},
       DOI = {10.5802/crmath.461},
       URL = {https://doi.org/10.5802/crmath.461},
}

@article {Rowe,
    AUTHOR = {Rowe, James},
     TITLE = {Noncommutative tensor triangular geometry: classification via
              {N}oetherian spectra},
   JOURNAL = {Pacific J. Math.},
  FJOURNAL = {Pacific Journal of Mathematics},
    VOLUME = {330},
      YEAR = {2024},
    NUMBER = {2},
     PAGES = {355--371},
      ISSN = {0030-8730,1945-5844},
   MRCLASS = {18G80 (18M05)},
  MRNUMBER = {4879382},
       DOI = {10.2140/pjm.2024.330.355},
       URL = {https://doi.org/10.2140/pjm.2024.330.355},
}

@article {Carlson1983,
    AUTHOR = {Carlson, Jon F.},
     TITLE = {The varieties and the cohomology ring of a module},
   JOURNAL = {J. Algebra},
  FJOURNAL = {Journal of Algebra},
    VOLUME = {85},
      YEAR = {1983},
    NUMBER = {1},
     PAGES = {104--143},
      ISSN = {0021-8693},
   MRCLASS = {20C20 (16A26 20J99)},
  MRNUMBER = {723070},
MRREVIEWER = {I.\ Reiner},
       DOI = {10.1016/0021-8693(83)90121-7},
       URL = {https://doi.org/10.1016/0021-8693(83)90121-7},
}

@incollection {Alperin1986,
    AUTHOR = {Alperin, J. L.},
     TITLE = {Cohomology is representation theory},
 BOOKTITLE = {The {A}rcata {C}onference on {R}epresentations of {F}inite
              {G}roups ({A}rcata, {C}alif., 1986)},
    SERIES = {Proc. Sympos. Pure Math.},
    VOLUME = {47, Part 1},
     PAGES = {3--11},
 PUBLISHER = {Amer. Math. Soc., Providence, RI},
      YEAR = {1987},
      ISBN = {0-8218-1477-X},
   MRCLASS = {20C20 (20J06)},
  MRNUMBER = {933344},
MRREVIEWER = {Peter\ W.\ Donovan},
       DOI = {10.1090/pspum/047.1/933344},
       URL = {https://doi.org/10.1090/pspum/047.1/933344},
}

@article {NeemanChrom1992,
    AUTHOR = {Neeman, Amnon},
     TITLE = {The chromatic tower for {$D(R)$}},
      NOTE = {With an appendix by Marcel B\"{o}kstedt},
   JOURNAL = {Topology},
  FJOURNAL = {Topology. An International Journal of Mathematics},
    VOLUME = {31},
      YEAR = {1992},
    NUMBER = {3},
     PAGES = {519--532},
      ISSN = {0040-9383},
   MRCLASS = {18E30},
  MRNUMBER = {1174255},
MRREVIEWER = {Steven\ E.\ Landsburg},
       DOI = {10.1016/0040-9383(92)90047-L},
       URL = {https://doi.org/10.1016/0040-9383(92)90047-L},
}

@article {FriedlanderSuslin1997,
    AUTHOR = {Friedlander, Eric M. and Suslin, Andrei},
     TITLE = {Cohomology of finite group schemes over a field},
   JOURNAL = {Invent. Math.},
  FJOURNAL = {Inventiones Mathematicae},
    VOLUME = {127},
      YEAR = {1997},
    NUMBER = {2},
     PAGES = {209--270},
      ISSN = {0020-9910,1432-1297},
   MRCLASS = {14L15 (20C20 20G10)},
  MRNUMBER = {1427618},
MRREVIEWER = {Daniel\ K.\ Nakano},
       DOI = {10.1007/s002220050119},
       URL = {https://doi.org/10.1007/s002220050119},
}

@article {FP2007,
    AUTHOR = {Friedlander, Eric M. and Pevtsova, Julia},
     TITLE = {{$\Pi$}-supports for modules for finite group schemes},
   JOURNAL = {Duke Math. J.},
  FJOURNAL = {Duke Mathematical Journal},
    VOLUME = {139},
      YEAR = {2007},
    NUMBER = {2},
     PAGES = {317--368},
      ISSN = {0012-7094,1547-7398},
   MRCLASS = {14L15 (20C20 20J06)},
  MRNUMBER = {2352134},
MRREVIEWER = {Alan\ Koch},
       DOI = {10.1215/S0012-7094-07-13923-1},
       URL = {https://doi.org/10.1215/S0012-7094-07-13923-1},
}

@article {HuangVashaw2025,
    AUTHOR = {Huang, Hongdi and Vashaw, Kent B.},
     TITLE = {Group actions on monoidal triangulated categories and {B}almer
              spectra},
   JOURNAL = {Doc. Math.},
  FJOURNAL = {Documenta Mathematica},
    VOLUME = {30},
      YEAR = {2025},
    NUMBER = {5},
     PAGES = {1055--1083},
      ISSN = {1431-0635,1431-0643},
   MRCLASS = {18G80 (14L30 16T05 18M05)},
  MRNUMBER = {4934522},
       DOI = {10.4171/dm/1016},
       URL = {https://doi.org/10.4171/dm/1016},
}

@article {Balmer2010,
    AUTHOR = {Balmer, Paul},
     TITLE = {Spectra, spectra, spectra---tensor triangular spectra versus
              {Z}ariski spectra of endomorphism rings},
   JOURNAL = {Algebr. Geom. Topol.},
  FJOURNAL = {Algebraic \& Geometric Topology},
    VOLUME = {10},
      YEAR = {2010},
    NUMBER = {3},
     PAGES = {1521--1563},
      ISSN = {1472-2747,1472-2739},
   MRCLASS = {18E30 (19K56 55U35)},
  MRNUMBER = {2661535},
MRREVIEWER = {Amnon\ Neeman},
       DOI = {10.2140/agt.2010.10.1521},
       URL = {https://doi.org/10.2140/agt.2010.10.1521},
}

@book {EGNO2015,
    AUTHOR = {Etingof, Pavel and Gelaki, Shlomo and Nikshych, Dmitri and
              Ostrik, Victor},
     TITLE = {Tensor categories},
    SERIES = {Mathematical Surveys and Monographs},
    VOLUME = {205},
 PUBLISHER = {American Mathematical Society, Providence, RI},
      YEAR = {2015},
     PAGES = {xvi+343},
      ISBN = {978-1-4704-2024-6},
   MRCLASS = {18D10 (16T05)},
  MRNUMBER = {3242743},
MRREVIEWER = {Julien\ Bichon},
       DOI = {10.1090/surv/205},
       URL = {https://doi.org/10.1090/surv/205},
}

@article {NP2023a,
    AUTHOR = {Negron, Cris and Pevtsova, Julia},
     TITLE = {Hypersurface support and prime ideal spectra for stable
              categories},
   JOURNAL = {Ann. K-Theory},
  FJOURNAL = {Annals of K-Theory},
    VOLUME = {8},
      YEAR = {2023},
    NUMBER = {1},
     PAGES = {25--79},
      ISSN = {2379-1683,2379-1691},
   MRCLASS = {16E30 (16T05 18G80 18M05)},
  MRNUMBER = {4581933},
       DOI = {10.2140/akt.2023.8.25},
       URL = {https://doi.org/10.2140/akt.2023.8.25},
}

@article {BPW2021,
    AUTHOR = {Bergh, Petter Andreas and Plavnik, Julia Yael and Witherspoon,
              Sarah},
     TITLE = {Support varieties for finite tensor categories: complexity,
              realization, and connectedness},
   JOURNAL = {J. Pure Appl. Algebra},
  FJOURNAL = {Journal of Pure and Applied Algebra},
    VOLUME = {225},
      YEAR = {2021},
    NUMBER = {9},
     PAGES = {Paper No. 106705, 21},
      ISSN = {0022-4049,1873-1376},
   MRCLASS = {16E40 (16T05 18M05)},
  MRNUMBER = {4219963},
MRREVIEWER = {Ehud\ Meir},
       DOI = {10.1016/j.jpaa.2021.106705},
       URL = {https://doi.org/10.1016/j.jpaa.2021.106705},
}

@book {Neeman2001,
    AUTHOR = {Neeman, Amnon},
     TITLE = {Triangulated categories},
    SERIES = {Annals of Mathematics Studies},
    VOLUME = {148},
 PUBLISHER = {Princeton University Press, Princeton, NJ},
      YEAR = {2001},
     PAGES = {viii+449},
      ISBN = {0-691-08685-0; 0-691-08686-9},
   MRCLASS = {18E30 (55-02 55N20 55U35)},
  MRNUMBER = {1812507},
MRREVIEWER = {Stanis\l aw\ Betley},
       DOI = {10.1515/9781400837212},
       URL = {https://doi.org/10.1515/9781400837212},
}

@book {Happel1988,
    AUTHOR = {Happel, Dieter},
     TITLE = {Triangulated categories in the representation theory of
              finite-dimensional algebras},
    SERIES = {London Mathematical Society Lecture Note Series},
    VOLUME = {119},
 PUBLISHER = {Cambridge University Press, Cambridge},
      YEAR = {1988},
     PAGES = {x+208},
      ISBN = {0-521-33922-7},
   MRCLASS = {16A46 (16A48 16A62 16A64 18E30)},
  MRNUMBER = {935124},
MRREVIEWER = {Alfred\ G.\ Wiedemann},
       DOI = {10.1017/CBO9780511629228},
       URL = {https://doi.org/10.1017/CBO9780511629228},
}

@book {CTVZ2003,
    AUTHOR = {Carlson, Jon F. and Townsley, Lisa and Valeri-Elizondo, Luis
              and Zhang, Mucheng},
     TITLE = {Cohomology rings of finite groups},
    SERIES = {Algebra and Applications},
    VOLUME = {3},
      NOTE = {With an appendix: Calculations of cohomology rings of groups
              of order dividing 64 by Carlson, Valeri-Elizondo and Zhang},
 PUBLISHER = {Kluwer Academic Publishers, Dordrecht},
      YEAR = {2003},
     PAGES = {xvi+776},
      ISBN = {1-4020-1525-9},
   MRCLASS = {20J06},
  MRNUMBER = {2028960},
MRREVIEWER = {Kevin\ P.\ Knudson},
       DOI = {10.1007/978-94-017-0215-7},
       URL = {https://doi.org/10.1007/978-94-017-0215-7},
}

@article {EO2004,
    AUTHOR = {Etingof, Pavel and Ostrik, Viktor},
     TITLE = {Finite tensor categories},
   JOURNAL = {Mosc. Math. J.},
  FJOURNAL = {Moscow Mathematical Journal},
    VOLUME = {4},
      YEAR = {2004},
    NUMBER = {3},
     PAGES = {627--654, 782--783},
      ISSN = {1609-3321,1609-4514},
   MRCLASS = {18D10 (16W30)},
  MRNUMBER = {2119143},
MRREVIEWER = {Sebastian\ Burciu},
       DOI = {10.17323/1609-4514-2004-4-3-627-654},
       URL = {https://doi.org/10.17323/1609-4514-2004-4-3-627-654},
}

@article {BHS2023b,
    AUTHOR = {Barthel, Tobias and Heard, Drew and Sanders, Beren},
     TITLE = {Stratification in tensor triangular geometry with applications
              to spectral {M}ackey functors},
   JOURNAL = {Camb. J. Math.},
  FJOURNAL = {Cambridge Journal of Mathematics},
    VOLUME = {11},
      YEAR = {2023},
    NUMBER = {4},
     PAGES = {829--915},
      ISSN = {2168-0930,2168-0949},
   MRCLASS = {18G80 (14F08 18F99 55P42 55P91 55U35)},
  MRNUMBER = {4650265},
       DOI = {10.4310/cjm.2023.v11.n4.a2},
       URL = {https://doi.org/10.4310/cjm.2023.v11.n4.a2},
}

@article {BHS2023a,
    AUTHOR = {Barthel, Tobias and Heard, Drew and Sanders, Beren},
     TITLE = {Stratification and the comparison between homological and
              tensor triangular support},
   JOURNAL = {Q. J. Math.},
  FJOURNAL = {The Quarterly Journal of Mathematics},
    VOLUME = {74},
      YEAR = {2023},
    NUMBER = {2},
     PAGES = {747--766},
      ISSN = {0033-5606,1464-3847},
   MRCLASS = {55P42 (18G80)},
  MRNUMBER = {4596217},
       DOI = {10.1093/qmath/haac040},
       URL = {https://doi.org/10.1093/qmath/haac040},
}

@article {BIK2011b,
    AUTHOR = {Benson, David J. and Iyengar, Srikanth B. and Krause, Henning},
     TITLE = {Stratifying modular representations of finite groups},
   JOURNAL = {Ann. of Math. (2)},
  FJOURNAL = {Annals of Mathematics. Second Series},
    VOLUME = {174},
      YEAR = {2011},
    NUMBER = {3},
     PAGES = {1643--1684},
      ISSN = {0003-486X,1939-8980},
   MRCLASS = {20C20 (16D90 16S34 20C05 20J06)},
  MRNUMBER = {2846489},
MRREVIEWER = {Burkhard\ K\"ulshammer},
       DOI = {10.4007/annals.2011.174.3.6},
       URL = {https://doi.org/10.4007/annals.2011.174.3.6},
}

@article {BIK2011a,
    AUTHOR = {Benson, Dave and Iyengar, Srikanth B. and Krause, Henning},
     TITLE = {Stratifying triangulated categories},
   JOURNAL = {J. Topol.},
  FJOURNAL = {Journal of Topology},
    VOLUME = {4},
      YEAR = {2011},
    NUMBER = {3},
     PAGES = {641--666},
      ISSN = {1753-8416,1753-8424},
   MRCLASS = {18E30 (13D45 18G99 20J06 55P42)},
  MRNUMBER = {2832572},
MRREVIEWER = {Guodong\ Zhou},
       DOI = {10.1112/jtopol/jtr017},
       URL = {https://doi.org/10.1112/jtopol/jtr017},
}

@article {BIKP2017,
    AUTHOR = {Benson, Dave and Iyengar, Srikanth B. and Krause, Henning and
              Pevtsova, Julia},
     TITLE = {Stratification and {$\pi$}-cosupport: finite groups},
   JOURNAL = {Math. Z.},
  FJOURNAL = {Mathematische Zeitschrift},
    VOLUME = {287},
      YEAR = {2017},
    NUMBER = {3-4},
     PAGES = {947--965},
      ISSN = {0025-5874,1432-1823},
   MRCLASS = {20J06 (16G10 20C20 20G10)},
  MRNUMBER = {3719521},
MRREVIEWER = {Christopher\ P.\ Bendel},
       DOI = {10.1007/s00209-017-1853-8},
       URL = {https://doi.org/10.1007/s00209-017-1853-8},
}

@article {BIKP2018,
    AUTHOR = {Benson, Dave and Iyengar, Srikanth B. and Krause, Henning and
              Pevtsova, Julia},
     TITLE = {Stratification for module categories of finite group schemes},
   JOURNAL = {J. Amer. Math. Soc.},
  FJOURNAL = {Journal of the American Mathematical Society},
    VOLUME = {31},
      YEAR = {2018},
    NUMBER = {1},
     PAGES = {265--302},
      ISSN = {0894-0347,1088-6834},
   MRCLASS = {16G10 (20G10 20J06)},
  MRNUMBER = {3718455},
MRREVIEWER = {Burkhard\ K\"ulshammer},
       DOI = {10.1090/jams/887},
       URL = {https://doi.org/10.1090/jams/887},
}

@article {BKSS2020,
    AUTHOR = {Buan, Aslak Bakke and Krause, Henning and Snashall, Nicole and
              Solberg, {\O}yvind},
     TITLE = {Support varieties---an axiomatic approach},
   JOURNAL = {Math. Z.},
  FJOURNAL = {Mathematische Zeitschrift},
    VOLUME = {295},
      YEAR = {2020},
    NUMBER = {1-2},
     PAGES = {395--426},
      ISSN = {0025-5874,1432-1823},
   MRCLASS = {18G80 (16D90 16T05 18E05)},
  MRNUMBER = {4100008},
MRREVIEWER = {Joshua\ Hans Lata Pollitz},
       DOI = {10.1007/s00209-019-02343-4},
       URL = {https://doi.org/10.1007/s00209-019-02343-4},
}

@article {Avramov1989,
    AUTHOR = {Avramov, L. L.},
     TITLE = {Modules of finite virtual projective dimension},
   JOURNAL = {Invent. Math.},
  FJOURNAL = {Inventiones Mathematicae},
    VOLUME = {96},
      YEAR = {1989},
    NUMBER = {1},
     PAGES = {71--101},
      ISSN = {0020-9910,1432-1297},
   MRCLASS = {13D05 (13H10)},
  MRNUMBER = {981738},
MRREVIEWER = {W.\ V.\ Vasconcelos},
       DOI = {10.1007/BF01393971},
       URL = {https://doi.org/10.1007/BF01393971},
}

@incollection {Hopkins1987,
    AUTHOR = {Hopkins, Michael J.},
     TITLE = {Global methods in homotopy theory},
 BOOKTITLE = {Homotopy theory ({D}urham, 1985)},
    SERIES = {London Math. Soc. Lecture Note Ser.},
    VOLUME = {117},
     PAGES = {73--96},
 PUBLISHER = {Cambridge Univ. Press, Cambridge},
      YEAR = {1987},
      ISBN = {0-521-33946-4},
   MRCLASS = {55Q45 (55P42 55P60 55Q10)},
  MRNUMBER = {932260},
MRREVIEWER = {Douglas\ C.\ Ravenel},
}

@book {GW2010,
    AUTHOR = {G\"ortz, Ulrich and Wedhorn, Torsten},
     TITLE = {Algebraic geometry {I}},
    SERIES = {Advanced Lectures in Mathematics},
      NOTE = {Schemes with examples and exercises},
 PUBLISHER = {Vieweg + Teubner, Wiesbaden},
      YEAR = {2010},
     PAGES = {viii+615},
      ISBN = {978-3-8348-0676-5},
   MRCLASS = {14-01},
  MRNUMBER = {2675155},
MRREVIEWER = {C\'icero\ Carvalho},
       DOI = {10.1007/978-3-8348-9722-0},
       URL = {https://doi.org/10.1007/978-3-8348-9722-0},
}

@article {NVY2,
    AUTHOR = {Nakano, Daniel K. and Vashaw, Kent B. and Yakimov, Milen T.},
     TITLE = {Noncommutative tensor triangular geometry and the tensor
              product property for support maps},
   JOURNAL = {Int. Math. Res. Not. IMRN},
  FJOURNAL = {International Mathematics Research Notices. IMRN},
      YEAR = {2022},
    NUMBER = {22},
     PAGES = {17766--17796},
      ISSN = {1073-7928,1687-0247},
   MRCLASS = {18M05 (16T05 18G80)},
  MRNUMBER = {4514455},
MRREVIEWER = {Daniel\ Ion\ Bulacu},
       DOI = {10.1093/imrn/rnab221},
       URL = {https://doi.org/10.1093/imrn/rnab221},
}

@article {AS1982,
    AUTHOR = {Avrunin, George S. and Scott, Leonard L.},
     TITLE = {Quillen stratification for modules},
   JOURNAL = {Invent. Math.},
  FJOURNAL = {Inventiones Mathematicae},
    VOLUME = {66},
      YEAR = {1982},
    NUMBER = {2},
     PAGES = {277--286},
      ISSN = {0020-9910,1432-1297},
   MRCLASS = {20J06 (20C05)},
  MRNUMBER = {656624},
MRREVIEWER = {Kenneth\ A.\ Brown},
       DOI = {10.1007/BF01389395},
       URL = {https://doi.org/10.1007/BF01389395},
}

@article {NP2023b,
    AUTHOR = {Negron, Cris and Pevtsova, Julia},
     TITLE = {Support for integrable {H}opf algebras via noncommutative
              hypersurfaces},
   JOURNAL = {Int. Math. Res. Not. IMRN},
  FJOURNAL = {International Mathematics Research Notices. IMRN},
      YEAR = {2023},
    NUMBER = {3},
     PAGES = {1882--1958},
      ISSN = {1073-7928,1687-0247},
   MRCLASS = {16T05 (14L15)},
  MRNUMBER = {4565605},
MRREVIEWER = {Petter\ Andreas\ Bergh},
       DOI = {10.1093/imrn/rnab264},
       URL = {https://doi.org/10.1093/imrn/rnab264},
}

@article {BKS2007,
    AUTHOR = {Buan, Aslak Bakke and Krause, Henning and Solberg, {\O}yvind},
     TITLE = {Support varieties: an ideal approach},
   JOURNAL = {Homology Homotopy Appl.},
  FJOURNAL = {Homology, Homotopy and Applications},
    VOLUME = {9},
      YEAR = {2007},
    NUMBER = {1},
     PAGES = {45--74},
      ISSN = {1532-0073,1532-0081},
   MRCLASS = {18E30},
  MRNUMBER = {2280286},
       URL = {http://projecteuclid.org/euclid.hha/1175791087},
}

@incollection {Balmer2020,
    AUTHOR = {Balmer, Paul},
     TITLE = {A guide to tensor-triangular classification},
 BOOKTITLE = {Handbook of homotopy theory},
    SERIES = {CRC Press/Chapman Hall Handb. Math. Ser.},
     PAGES = {145--162},
 PUBLISHER = {CRC Press, Boca Raton, FL},
      YEAR = {[2020] \copyright 2020},
      ISBN = {978-0-815-36970-7},
   MRCLASS = {55U35 (18G80 55P42)},
  MRNUMBER = {4197984},
MRREVIEWER = {Philippe\ Gaucher},
}

@article {Neeman1992,
    AUTHOR = {Neeman, Amnon},
     TITLE = {The connection between the {$K$}-theory localization theorem
              of {T}homason, {T}robaugh and {Y}ao and the smashing
              subcategories of {B}ousfield and {R}avenel},
   JOURNAL = {Ann. Sci. \'{E}cole Norm. Sup. (4)},
  FJOURNAL = {Annales Scientifiques de l'\'{E}cole Normale Sup\'{e}rieure.
              Quatri\`eme S\'{e}rie},
    VOLUME = {25},
      YEAR = {1992},
    NUMBER = {5},
     PAGES = {547--566},
      ISSN = {0012-9593},
   MRCLASS = {18E30 (19D10 19E08)},
  MRNUMBER = {1191736},
MRREVIEWER = {Steven\ E.\ Landsburg},
       URL = {http://www.numdam.org/item?id=ASENS_1992_4_25_5_547_0},
}

@article {Thomason1997,
    AUTHOR = {Thomason, R. W.},
     TITLE = {The classification of triangulated subcategories},
   JOURNAL = {Compositio Math.},
  FJOURNAL = {Compositio Mathematica},
    VOLUME = {105},
      YEAR = {1997},
    NUMBER = {1},
     PAGES = {1--27},
      ISSN = {0010-437X,1570-5846},
   MRCLASS = {18E30 (18F30)},
  MRNUMBER = {1436741},
MRREVIEWER = {Steven\ E.\ Landsburg},
       DOI = {10.1023/A:1017932514274},
       URL = {https://doi.org/10.1023/A:1017932514274},
}

@article {Stevenson2014a,
    AUTHOR = {Stevenson, Greg},
     TITLE = {Derived categories of absolutely flat rings},
   JOURNAL = {Homology Homotopy Appl.},
  FJOURNAL = {Homology, Homotopy and Applications},
    VOLUME = {16},
      YEAR = {2014},
    NUMBER = {2},
     PAGES = {45--64},
      ISSN = {1532-0073,1532-0081},
   MRCLASS = {13D09},
  MRNUMBER = {3234500},
MRREVIEWER = {Sunil\ K.\ Chebolu},
       DOI = {10.4310/HHA.2014.v16.n2.a3},
       URL = {https://doi.org/10.4310/HHA.2014.v16.n2.a3},
}

@incollection {Krause2010,
    AUTHOR = {Krause, Henning},
     TITLE = {Localization theory for triangulated categories},
 BOOKTITLE = {Triangulated categories},
    SERIES = {London Math. Soc. Lecture Note Ser.},
    VOLUME = {375},
     PAGES = {161--235},
 PUBLISHER = {Cambridge Univ. Press, Cambridge},
      YEAR = {2010},
      ISBN = {978-0-521-74431-7},
   MRCLASS = {18E35 (18E30)},
  MRNUMBER = {2681709},
MRREVIEWER = {Jue\ Le},
       DOI = {10.1017/CBO9781139107075.005},
       URL = {https://doi.org/10.1017/CBO9781139107075.005},
}

@incollection {Miller1992,
    AUTHOR = {Miller, Haynes},
     TITLE = {Finite localizations},
      NOTE = {Papers in honor of Jos\'e{} Adem (Spanish)},
   JOURNAL = {Bol. Soc. Mat. Mexicana (2)},
  FJOURNAL = {Bolet\'in de la Sociedad Matem\'atica Mexicana. Segunda Serie},
    VOLUME = {37},
      YEAR = {1992},
    NUMBER = {1-2},
     PAGES = {383--389},
   MRCLASS = {55P60},
  MRNUMBER = {1317588},
}

@article{pevtsova2015tensor,
    AUTHOR = {Pevtsova, Julia and Witherspoon, Sarah},
     TITLE = {Tensor ideals and varieties for modules of quantum elementary
              abelian groups},
   JOURNAL = {Proc. Amer. Math. Soc.},
  FJOURNAL = {Proceedings of the American Mathematical Society},
    VOLUME = {143},
      YEAR = {2015},
    NUMBER = {9},
     PAGES = {3727--3741},
      ISSN = {0002-9939,1088-6826},
   MRCLASS = {16T05 (16E40 18D10)},
  MRNUMBER = {3359565},
MRREVIEWER = {Petter\ Andreas\ Bergh},
       DOI = {10.1090/proc/12524},
       URL = {https://doi.org/10.1090/proc/12524},
}

@article{pevtsova2009varieties,
    AUTHOR = {Pevtsova, Julia and Witherspoon, Sarah},
     TITLE = {Varieties for modules of quantum elementary abelian groups},
   JOURNAL = {Algebr. Represent. Theory},
  FJOURNAL = {Algebras and Representation Theory},
    VOLUME = {12},
      YEAR = {2009},
    NUMBER = {6},
     PAGES = {567--595},
      ISSN = {1386-923X,1572-9079},
   MRCLASS = {16E40 (16T05)},
  MRNUMBER = {2563183},
MRREVIEWER = {Drago\c s\ \c Stefan},
       DOI = {10.1007/s10468-008-9100-y},
       URL = {https://doi.org/10.1007/s10468-008-9100-y},
}

@incollection {Krause2024,
    AUTHOR = {Krause, Henning},
     TITLE = {Completions of triangulated categories},
 BOOKTITLE = {Triangulated categories in representation theory and
              beyond---the {A}bel {S}ymposium 2022},
    SERIES = {Abel Symp.},
    VOLUME = {17},
     PAGES = {169--193},
 PUBLISHER = {Springer, Cham},
      YEAR = {[2024] \copyright 2024},
      ISBN = {978-3-031-57788-8; 978-3-031-57789-5},
   MRCLASS = {18G80 (16E35)},
  MRNUMBER = {4786506},
       DOI = {10.1007/978-3-031-57789-5\_6},
       URL = {https://doi.org/10.1007/978-3-031-57789-5_6},
}
\bibliographystyle{myamsalpha}


\end{document}